\newtheorem{thm}{Theorem}[section]
\newtheorem{prop}[thm]{Proposition}
\newtheorem{lm}[thm]{Lemma}
\newtheorem{cor}[thm]{Corollary}
\newtheorem{conj}[thm]{Conjecture}
\theoremstyle{definition}
\newtheorem{definition}[thm]{Definition}
\newtheorem{ex}[thm]{Example}
\newtheorem{rmk}[thm]{Remark}
\newcommand{\m}{\mathrm{m}}
\newcommand{\reg}{\mathrm{reg}}
\newcommand{\dR}{\mathrm{dR}}
\newcommand{\gm}{\mathrm{gm}}
\newcommand{\rat}{\mathrm{rat}}
\newcommand{\Abb}{\mathbb{A}}
\newcommand{\Cbb}{\mathbb{C}}
\newcommand{\Gbb}{\mathbb{G}}
\newcommand{\Fbb}{\mathbb{F}}
\newcommand{\Pbb}{\mathbb{P}}
\newcommand{\Qbb}{\mathbb{Q}}
\newcommand{\Rbb}{\mathbb{R}}
\newcommand{\Tbb}{\mathbb{T}}
\newcommand{\Zbb}{\mathbb{Z}}
\newcommand{\Lbb}{\mathbb{L}}
\newcommand{\Bscr}{\mathscr{B}}
\newcommand{\Rscr}{\mathscr{R}}
\newcommand{\Bcal}{\mathcal{B}}
\newcommand{\Dcal}{\mathcal{D}}
\newcommand{\Ecal}{\mathcal{E}}
\newcommand{\Hcal}{\mathcal{H}}
\newcommand{\Mcal}{\mathcal{M}}
\newcommand{\Ocal}{\mathcal{O}}
\newcommand{\Scal}{\mathcal{S}}
\newcommand{\Sm}{\operatorname{Sm}}
\newcommand{\DM}{\operatorname{DM}}
\newcommand{\CHM}{\operatorname{CHM}}
\newcommand{\SmProj}{\operatorname{SmProj}}
\newcommand{\Corr}{\operatorname{Corr}}
\newcommand{\id}{\operatorname{id}}
\newcommand{\et}{\textnormal{\'et}}
\newcommand{\SL}{\operatorname{SL}}
\newcommand{\sgn}{\operatorname{sgn}}
\newcommand{\ord}{\operatorname{ord}}
\newcommand{\rk}{\operatorname{rk}}
\newcommand{\Pic}{\operatorname{Pic}}
\newcommand{\NS}{\operatorname{NS}}
\newcommand{\Gal}{\operatorname{Gal}}
\newcommand{\Ker}{\operatorname{Ker}}
\newcommand{\pr}{\operatorname{pr}}
\newcommand{\alg}{\operatorname{alg}}
\newcommand{\tr}{\operatorname{tr}}
\newcommand{\Hom}{\operatorname{Hom}}
\newcommand{\End}{\operatorname{End}}
\newcommand{\Spec}{\operatorname{Spec}}
\renewcommand{\Im}{\operatorname{Im}}
\newcommand{\sHom}{\mathcal{H}\kern -.5pt om}
\newcommand{\sExt}{\mathcal{E}\kern -.5pt xt}
\numberwithin{equation}{subsection}
\begin{document}
	
	\title[The Mahler measure of exact polynomials]{The Mahler measure of exact polynomials\\and special $L$-values of $K3$ surfaces}

	\author{Trieu Thu Ha}
	\address{Unité de Mathématiques Pures et Appliquées, ÉNS de Lyon, 46, allée d’Italie, 69007, France}
    \email{thu-ha.trieu@ens-lyon.fr}
    \address{Faculty of Mathematics and Informatics, Hanoi University of Science and Technology.}
    \email{ha.trieuthu@hust.edu.vn}
	\thanks{This work was performed partially in the framework of the LABEX MILYON (ANR-10-LABX-0070) of Université de Lyon, in the program ``Investissements d'Avenir'' (ANR-11-IDEX-0007) operated by the French National Research Agency (ANR). It was later completed while the author was working at the Faculty of Mathematics and Informatics, Hanoi University of Science and Technology.}
	\keywords{Mahler measure, regulators, motivic cohomology, Deligne-Beilinson cohomology, $K3$ surfaces.}
	\subjclass[2020]{Primary:  19F27; Secondary: 11G55, 11R06, 14J28, 14J27}
	%\urladdr{www.math.sc.edu/$\sim$second}

	\begin{abstract}
	We express the Mahler measure of an exact polynomial in arbitrarily many variables in terms of Deligne-Beilinson cohomology.  We then focus on the relationship between the Mahler measure of four-variable exact polynomials and the special value of the $L$-function of $K3$ surfaces at $s = 4$. This result extends the three-variable case studied in \cite{Tri23}. Finally, we prove, under Beilinson's conjecture, that the Mahler measure of the polynomial $(x+1)(y+1)(z+1) + t$ is expressed in terms of the Riemann zeta function and the $L$-function of the modular form of weight 3 and level 7.
	\end{abstract}

	\maketitle

	\setcounter{section}{-1}
	\section{Introduction}
	
	The Mahler measure of polynomials was introduced by Mahler in 1962 to study transcendental number theory (see  \cite{Mah62}). The (logarithmic) Mahler measure of a polynomial $P(x_1, \dots, x_n) \in \Cbb[x_1^{\pm 1}, \dots, x_n^{\pm 1}]$ is defined by 
	\begin{equation}\label{19}
		\begin{aligned}
			\m(P) &= \dfrac{1}{(2\pi i)^n} \int_{\Tbb^n} \log |P(x_1, \dots, x_n)| \ \dfrac{d x_1}{x_1} \wedge \cdots \wedge \dfrac{d x_n}{x_n},
		\end{aligned}
	\end{equation}
	where $\Tbb^n : |x_1|=\dots = |x_n|=1$ is the $n$-dimensional torus.
	
	In 1997, Deninger \cite{Den97} linked the Mahler measure of a complex polynomial $P$ under certain conditions to the Deligne-Beilinson cohomology of $V_P$, where $V_P$ is the zero locus of $P$ in $(\Cbb^\times)^n$. Deninger defined a chain $\Gamma$ attached to $P$ and an $(n-1)$-differential form $\eta(x_1, \dots, x_n)$  which represents the regulator  $\reg^{n, n}_{\Gbb_m^n}(\{x_1, \dots, x_n\})$, here 
    $$\reg^{n, n}_{\Gbb_m^n} : H^n_\Mcal(\Gbb_m^n, \Qbb(n)) \to H^n_\Dcal(\Gbb_m^n, \Rbb(n))$$
    is the Beilinson regulator map from the motivic cohomology to the Deligne-Beilinson cohomology of $\Gbb_m^n$, and $\{x_1, \dots, x_n\}$ denotes the Milnor symbol in $H^n_\Mcal(\Gbb_m^n, \Qbb(n))$. Deninger showed that if $\Gamma$ is contained in the regular locus $V_P^\reg$ of $V_P$, then
	\begin{equation}\label{1}
		\m(P) = \m(\tilde{P}) + \dfrac{(-1)^{n-1}}{(2\pi i)^{n-1}} \int_\Gamma \eta(x_1,\dots, x_n),
	\end{equation}
	where $\tilde{P}$ is the leading coefficient of $P$ seen as a polynomial in $x_n$.

 Now we assume that $P$ has rational coefficients. The variety $V_P$ is therefore defined over $\Qbb$ and we still denote it by $V_P$.  Deninger found that if $\partial \Gamma  = \varnothing$, the identity \eqref{1} together with Beilinson's conjectures should imply that $\m(P)$ is expressed in terms of the $L$-function of the pure motive $h^{n-1}(\overline{V_P})$, where $\overline{V_P}$ is a smooth compactification of $V_P$. When $\partial \Gamma \neq \varnothing$, Maillot \cite{Mai03} suggested looking at the variety $W_P := V_P \cap V_{P^*}$, where $P^* (x_1,\dots ,x_n) = P(x_1^{-1}, \dots, x_n^{-1})$. As $P$ has rational coefficients, $W_P$ is also defined over $\Qbb$, and we call it the \textit{Maillot variety}. The polynomial $P$ is called \textit{exact} if the regulator $\reg_{V_P^\reg}^{n,n}(\{x_1, \dots, x_n\})$ is trivial, which is equivalent to saying that  $\eta|_{V_{P}^{\reg}}$ is an exact form. Now suppose that $P$ is an exact polynomial. Then Stokes' theorem gives
	$$\m(P) = \m(\tilde{P}) + \dfrac{(-1)^{n-1}}{(2\pi i)^{n-1}} \int_{\partial \Gamma} \omega.$$
    Since $P$ has rational coefficients, $\partial \Gamma$ is contained in $W_P$. One then expects that $\m(P)$ is related to the cohomology of $W_P$ under some conditions. Continuing this direction, Lalín proved the following numerical identity of Boyd \cite{Boy06} under  Beilinson's conjectures
	\begin{equation*}
		\m((x+1)(y+1)+z) = a \cdot L'(E_{15},-1) \quad (a \in \Qbb^\times),
	\end{equation*}
	where $E_{15}$ is the elliptic curve of conductor 15. This identity was then completely proved by Brunault \cite{Bru23}. Lalín's result was generalized by the author in \cite{Tri23}. More precisely, we showed that the Mahler measure of three-variable exact polynomials can be related to the Beilinson regulator of certain motivic cohomology classes. Then under Beilinson's conjecture for genus 1 curves, the Mahler measure can be expressed in terms of the special value of  elliptic curve $L$-functions at $s = 3$ and a linear combination of Bloch-Wigner dilogarithmic values (see \cite[Theorem 0.2]{Tri23}). In \cite{Tri23}, we applied this method and obtained many numerical identities of Boyd and Brunault (see \cite{Boy06}, \cite{Bru20}) under Beilinson's conjectures, for example,
	\begin{equation}
		m(1+ (x^2 - x + 1)y + (x^2 + x + 1)z) = a \cdot L'(E_{45}, -1)  + b \cdot  L'(\chi_{-3},-1) \quad (a \in \mathbb{Q}^\times, b \in \mathbb{Q}^\times), 
	\end{equation}
	\begin{equation*}
		\m(x^2+1 +(x+1)^2 y + (x^2 -1)z) = c \cdot L'(E_{48}, -1) +  L'(\chi_{-4}, -1) \quad (c \in \Qbb^\times).
	\end{equation*}

	In this article, we express the Mahler measure of an exact polynomial in arbitrarily many variables in terms of the Deligne-Beilinson cohomology of an open subset of the Maillot variety (see Theorem \ref{7.2.8}). More precisely, let $P(x_1, \dots, x_n) \in \Qbb[x_1, \dots, x_n]$ be an exact polynomial. If $\partial \Gamma$ does not cross the singularities of $W_P$, it then defines an element in the singular homology group $H_{n-2}(Y(\Cbb), \Qbb)^{(-1)^{n-1}}$ of a smooth open subvariety $Y$ of $W_P$, and $+/-$ respectively denotes the invariant/anti-invariant part under the action of the complex conjugation. We construct a $(n-2)$-differential form $\rho$ which defines an element in Deligne-Beilinson cohomology $H_\Dcal^{n-1}(Y_\Rbb, \Rbb(n))$ such that 
	\begin{equation}\label{296}
		\m (P) = \m(\tilde P) +   \dfrac{(-1)^{n-1}}{(2\pi i)^{n-1}} \left<[\partial \Gamma], [\rho] \right>_Y,
	\end{equation}
	where the pairing is given by 
	\begin{equation}\label{pair1}
		\begin{aligned}
			\left<\ , \ \right>_Y : H_{n-2}(Y(\Cbb), \Qbb)^{(-1)^{n-1}} \times H^{n-2}(Y(\Cbb), \Rbb(n-1))^+ &\to \Rbb(n-1), \quad 
			(\gamma, \omega) &\mapsto \int_\gamma \omega.
		\end{aligned}
	\end{equation}

Let $X$ be a smooth compactification of $W_P$ and $F$ be its function field. We construct an element in the cohomology group $H^{n-1}(\Gamma(F, n))$, where $\Gamma(F, n)$ is Goncharov's polylogarithmic complex (see Lemma \ref{Hn-1Fn}). In the case $n=4$, under an explicit condition, this element gives rise to an element in the cohomology group $H^3(\Gamma(X, 4))$, where $\Gamma(X, 4)$ is Goncharov's polylogarithmic complex of X (see Remark \ref{conditionLambda}).  Under Goncharov's conjecture
    \begin{equation}\label{Gonconj}
		H^3(\Gamma(X, 4)) \stackrel{?}{\simeq} H^3_\Mcal(X, \Qbb(4)),
	\end{equation}
    it then gives us an element, denoted by $\Lambda$, in the  motivic cohomology group $H^3_\Mcal(X, \Qbb(4))$. Then the Mahler measure is related to the Beilinson regulator of $\Lambda$ (see Theorem \ref{4varsmain})
	\begin{equation}\label{303}
		\m(P) = \m(\tilde P) -\dfrac{1}{(2\pi i)^3} \left<[\partial \Gamma], \reg^{3, 4}_X  (\Lambda)\right>_X,
	\end{equation}
	where $\reg^{3, 4}_X : H^3_\Mcal(X, \Qbb(4)) \to H^3_\Dcal(X_\Rbb, \Rbb(4))$ is the Beilinson regulator map.

	Recall that, in the case of three-variable polynomials, if the Maillot variety is a curve of genus one, the Mahler measure can be related to the special value at $s=3$ of the $L$-function of the associated elliptic curve. Therefore, in the four-variable context, it is natural to consider cases in which the Maillot variety admits a projective regular model $X$ that is a $K3$ surface. In particular,  if the projective regular model $X$ is a singular $K3$ surface with Picard rank 20 over $\Qbb$ (i.e., $NS(X_{\bar \Qbb})$ is generated by 20 divisors defined over $\Qbb$), then under Beilinson's conjecture for the transcendental part of motivic cohomology \ref{242}, the Mahler measure is expressed in terms of the Riemann zeta function and the $L$-function of a weight 3 modular form (see Corollary \ref{final})
\begin{equation}\label{eqfinal}
\m(P) = \m(\tilde{P}) + a\cdot L'(f, -1) + b \cdot \zeta'(-2), \quad a, b \in \Qbb.
\end{equation}
Applying this method to the four-variable exact polynomial $P = (x+1)(y+1)(z+1)+t$, we obtain the following result. This appears to be the first known identity relating the Mahler measure of a four-variable polynomial to the $L$-function of a modular form.

	\begin{thm}\label{305} Let $P = (x+1)(y+1)(z+1)+t$. The minimal regular proper model of $W_P$ is a singular $K3$ surface with Picard rank 20 over $\Qbb$ . Assuming Goncharov's Conjecture \ref{Gonconj} and Beilinson's conjecture for the transcendental part of the motivic cohomology of singular $K3$ surfaces, we have
		$$\m(P) = a \cdot L'(f_7, -1) + b \cdot  \zeta'(-2), \quad a, b \in \Qbb,$$
		where $f_7$ is the modular form of weight 3 and level 7.
	\end{thm}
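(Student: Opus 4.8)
The plan is to specialize the general apparatus of Sections \ref{MahlerDeligne} and \ref{section K3} to the single polynomial $P = (x+1)(y+1)(z+1)+t$, so that the argument decomposes into three essentially independent tasks: a geometric identification of the Maillot variety and its $K3$ model, the verification of exactness together with the construction of the Goncharov class, and finally an arithmetic evaluation of the resulting regulator into a transcendental and an algebraic contribution.

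First I would make $W_P = V_P \cap V_{P^*}$ explicit. On $V_P$ one has $t = -(x+1)(y+1)(z+1)$, and clearing denominators in $P^*=0$ gives $t(1+x)(1+y)(1+z)+xyz=0$; substituting the first relation into the second yields the affine surface
\[
\bigl((x+1)(y+1)(z+1)\bigr)^2 = xyz \subset \Gbb_m^3,
\]
with $t$ determined. I would compactify this in a suitable toric variety adapted to its Newton polytope and compute the minimal regular proper model $X$, checking that $X$ is a $K3$ surface (trivial canonical class, $h^{1,0}=0$) via the resolution of the toric boundary. The more delicate claim is that $X$ is \emph{singular} of Picard rank $20$ with all of $\NS(X_{\bar\Qbb})$ already defined over $\Qbb$: I would exhibit the requisite divisors explicitly, namely the components of the toric boundary together with the curves cut out by the square-root structure of the defining equation and by the symmetries $x\leftrightarrow x^{-1}$ etc., and verify that their intersection lattice has rank $20$ and is Galois-stable. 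This simultaneously pins down the rank-$2$ transcendental lattice and hence the motive $h^2_{\tr}(X)$ of Conjecture \ref{Beiconjtr}.

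Next I would check that $P$ is exact by producing the decomposition \eqref{308} of $x\wedge y\wedge z\wedge t$ in $\bigwedge^4 \Qbb(V_P)^\times_\Qbb$, exploiting the factorization of $P$ into the linear forms $x+1,\,y+1,\,z+1$ and the variable $t$; this is the four-variable analogue of Lal\'in's computation and supplies the input of Theorem \ref{7.2.8}, giving the Deligne-cohomology identity \eqref{296}. The further hypotheses of Section \ref{section K3} then allow me to build the explicit class in $H^3(\Gamma(X,4))$ which, under Goncharov's conjecture \ref{Gonconj}, yields $\Lambda \in H^3_\Mcal(X,\Qbb(4))$ and the regulator formula \eqref{303}, namely
\[
\m(P) = \m(\tilde P) - \dfrac{1}{(2\pi i)^3}\left<[\partial\Gamma], \reg_X(\Lambda)\right>_X,
\]
where $\m(\tilde P)=0$ because $P$ is monic in $t$.

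Finally I would evaluate the pairing by splitting $H^3_\Mcal(X,\Qbb(4))$, and correspondingly $\reg_X(\Lambda)$, into transcendental and algebraic parts as in Proposition \ref{DBK3}. On the transcendental part, pairing against the generator of $H_2(X(\Cbb),\Qbb)^-$ is governed by Beilinson's conjecture \ref{Beiconjtr} and produces a rational multiple of $L'(h^2_{\tr}(X),-1)$; since $X$ is a singular $K3$ surface over $\Qbb$, Livn\'e's modularity theorem identifies $L(h^2_{\tr}(X),s)$ with the $L$-function of a weight-$3$ newform, and computing the discriminant of the transcendental lattice (the CM field being expected to be $\Qbb(\sqrt{-7})$) identifies that newform with $f_7$, giving the term $a\,L'(f_7,-1)$. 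On the algebraic part, the rationality over $\Qbb$ of all $20$ N\'eron-Severi classes forces each summand to be a product of a divisor class in $H^2_\Mcal(X,\Qbb(1))$ with the generator of $H^1_\Mcal(\Spec\Qbb,\Qbb(3))$, whose Borel regulator is a rational multiple of $\zeta(3)$, equivalently of $\zeta'(-2)$ via $\zeta'(-2)=-\zeta(3)/(4\pi^2)$; these assemble into the single term $b\,\zeta'(-2)$, and crucially it is $\zeta$ rather than a Dedekind zeta of a larger field precisely because the divisors are $\Qbb$-rational. The main obstacle is the geometric input of the second step: proving rigorously that this model is a singular $K3$ with \emph{all} twenty N\'eron-Severi classes rational, and correctly matching the transcendental lattice to the CM data of $f_7$. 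Once that is secured, the cohomological decomposition and the two conjectural regulator evaluations fall out essentially by construction.
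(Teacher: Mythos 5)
Your three-part architecture (geometry of $W_P$ and its $K3$ model; wedge decomposition and construction of the Goncharov class; splitting the regulator pairing into transcendental and algebraic parts) is the same as the paper's, your decomposition of $x\wedge y\wedge z\wedge t$ is the right one, and your final evaluation --- Beilinson's conjecture for the transcendental part, Borel's theorem giving $\zeta'(-2)$ for the algebraic part, with $\Qbb$-rationality of the N\'eron--Severi classes explaining why only the Riemann zeta appears --- matches Section \ref{conclusion}. The gaps are exactly at the two points you would have to work at. First, Picard rank $20$ and the level of the newform. The paper does not extract twenty divisor classes from a toric compactification: it compactifies $W_P$ in $\Pbb^1\times\Pbb^1\times\Pbb^1$, resolves three $A_1$-singularities, and then uses an explicit birational change of variables (due to Lecacheux) identifying $W_P$ with the universal elliptic curve carrying a point of order $7$, i.e.\ the elliptic modular surface attached to $\Gamma_1(7)$ (Proposition \ref{234}, Example \ref{K3modularsurface7}); minimality of $K3$ surfaces upgrades the birational map to an isomorphism. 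The elliptic fibration then does all the work at once: singular fibers $\mathrm{I_7},\mathrm{I_7},\mathrm{I_7},\mathrm{I_1},\mathrm{I_1},\mathrm{I_1}$ and Mordell--Weil group $\Zbb/7\Zbb$ give $\rho = 2+3\cdot 6+0 = 20$ by Shioda--Tate, with all generators (zero section, general fiber, fiber components) defined over $\Qbb$, and give $|\det\Tbb(X)| = 7^3/7^2 = 7$, whence the weight-$3$ level-$7$ form by Theorem \ref{liv}. Your plan --- toric boundary components plus curves ``cut out by the square-root structure'' and the symmetries --- has no reason to produce a rank-$20$ sublattice: rank $20$ is the maximal possible value and is the whole difficulty, not a verification, and your CM field $\Qbb(\sqrt{-7})$ is left as ``expected''. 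Without an elliptic fibration (or an equivalent explicit lattice/point-counting argument) neither the singularity of the $K3$ nor the level $7$ is established.

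Second, the residue condition. For $\Lambda=(\xi,0)$ to define a class in $H^3(\Gamma(X,4))$ --- the hypothesis of Theorem \ref{4varsmain}, cf.\ Remark \ref{conditionLambda} --- one must check that $\partial^{4,3}_p(\xi)=0$ at \emph{every} codimension-one point $p$ of the smooth compactification $X$, in particular along all zero and polar divisors of $x,y,z$ on the resolved model, including the exceptional curves over the $A_1$-points. The paper proves this in a dedicated lemma by listing (with Magma) the curves $L^{(j)}_i$ and $C^{(4)}_i$ supporting $\div(x_1)$ on the blow-up and checking each tame symbol. Your proposal never mentions this condition, nor the companion check that $\partial\Gamma$ avoids the zeros, poles and singular points (needed so that $[\partial\Gamma]$ defines a class in the homology of the correct open surface $Y$ and Theorem \ref{7.2.8} applies). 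These are verifications rather than ideas, but they are hypotheses of the theorems you invoke; without them the class $\Lambda$, and with it the regulator formula \eqref{303}, is simply not available.
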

	
	The article is divided into three sections. In Section \ref{MahlerDeligne}, we  prove equation \eqref{296}, which expresses the Mahler measure of exact polynomials in terms of Deligne-Beilinson cohomology.  In Section \ref{section 1.3}, we consider the four-variable case, and prove equation \eqref{303} under Goncharov's conjecture \ref{Gonconj}. In Section \ref{section K3}, we first study the motivic cohomology and Deligne-Beilinson cohomology of $K3$ surfaces and then prove \eqref{eqfinal}. In Section \ref{ex4var}, we prove Theorem \ref{305}.
	\vspace{0.2cm}
	
	\textbf{Acknowledgement.} I would like to thank my PhD supervisor, François Brunault, for many fruitful discussions. I would like to thank Odile Lecacheux and Marie José Bertin for the discussions on fibrations of elliptic surfaces.

	\section{Mahler measure and Deligne-Beilinson cohomology}\label{MahlerDeligne}
	
	In this section, we discuss the relationship between the Mahler measure of exact polynomials and Deligne-Beilinson cohomology. In fact, it generalizes the three-variable case studied in \cite[Lemma 4.8]{Tri23}.  We first recall briefly the definition of Deligne-Beilinson cohomology, motivic cohomology, and Goncharov's  polylogarithmic complexes.  The main result of this section is the explicit construction of an element in the $(n-1)$-cohomology group of Goncharov’s weight-$n$ complex, and the relation of its regulator with the Mahler measure (see Lemma \ref{Hn-1Fn} and Lemma \ref{7.2.7}). 

	\subsection{Deligne-Beilinson cohomology} 
We recall the definition of real Deligne-Beilinson cohomology of Burgos \cite{Bur97}. Let $X$ be a smooth complex variety. Let $j \ge 1$ be an integer.  Let $(\bar X, \iota)$ be a good compactification of $X$, i.e., $\iota : X \to \bar X$ is an embedding where $\bar X$ is a smooth proper variety and $D := \bar X \setminus \iota(X)$  is a normal crossing divisor on $\bar X$. For $\Lambda \in \{\Rbb, \Cbb\}$, we denote by $E^n_{X, \Lambda}(\log D)$ the space of $\Lambda$-valued smooth differential forms on $X$ of degree $n$ with logarithmic singularities along $D$, namely, differential forms generated by smooth forms on $\bar X$, $\log |z_i|, \frac{d z_i}{z_i}$,  $\frac{d \bar z_i }{\bar z_i}$, for $1 \le i \le m$, where $z_1 \dots z_m = 0$ is the local equation of $D$. When $\Lambda = \Cbb$, we have the following decomposition 
	\begin{equation}\label{51}
		E^n_{X, \Cbb} (\log D) = \bigoplus_{p+q = n} E^{p, q}_{X, \Cbb}(\log D),
	\end{equation}
	where $E^{p, q}$ is the subspace of $(p, q)$-forms. Denote by $ E^\bullet_{\log, \Lambda}(X) = \varinjlim_{(\bar X, D)} E^\bullet_{X, \Lambda}(\log D)$. For any integers $j, n \ge 0$, Burgos defined the following complex
	\begin{equation}\label{54}
		E_j(X)^n := \begin{cases}
			(2 \pi i)^{j-1} E^{n-1}_{\log, \Rbb}(X) \cap \left(\bigoplus_{p+q = n-1; p, q < j} E^{p, q}_{\log, \Cbb}(X)\right) & \text{if } n \le 2j-1,\\
			(2\pi i)^j E^n_{\log, \Rbb} (X) \cap \left(\bigoplus_{p+q = n; p, q \ge j} E^{p, q}_{\log, \Cbb}(X)\right) & \text{if } n \ge 2j,
		\end{cases}
	\end{equation}
	with differentials
	\begin{equation*}
		d^n \omega :=\begin{cases}
			-\text{pr}_j(d \omega) & \text{if } n < 2j-1,\\
			-2 \partial\bar \partial \omega & \text{if } n = 2j -1,\\
			d \omega & \text{if } n \ge 2j,
		\end{cases}
	\end{equation*}
	where $\text{pr}_j$ is the projection $\bigoplus_{p, q} \to \bigoplus_{p, q< j}$. 
	Burgos (\cite[Corollary 2.7]{Bur97}) then showed that 
	\begin{equation}\label{52}
		H^n_\Dcal (X, \Rbb(j)) \simeq H^n (E_j(X)^\bullet)\quad \text{for } j, n\ge 0.
	\end{equation}
	If $j > \dim X \ge 1$ or $j > n$, we have the following canonical isomorphism
	\begin{equation}\label{2.1.8}
		H^n_\Dcal(X, \Rbb(j)) \simeq H^{n-1}(X, \Rbb(j-1)),
	\end{equation}
	(see e.g. \cite[Lemma 2.1.10]{Tri24}). In particular, for $j > 1$, we have $H^1_\Dcal(\Spec (\Cbb), \Rbb(j)) \simeq \Rbb(j-1)$.

	Let $X$ be a regular algebraic variety over $\Rbb$. The set of complex points $X(\Cbb)$ is endowed with an action induced by the complex conjugation $\Cbb \to \Cbb, z \mapsto \bar z$, denoted by $F_\infty$. It induces an involution, denoted by $F_\dR$, on differential forms of $X(\Cbb)$ as below
	\begin{equation}\label{82}
		F_{\text{dR}}(\omega) := F^*_\infty (\bar \omega).
	\end{equation}
    The Deligne-Beilinson cohomology of $X$ is defined by the invariant part of the Deligne-Beilinson cohomology of $X(\Cbb)$ under the involution $F_\dR$
	\begin{equation}
		H^n_\Dcal (X/_\Rbb, \Rbb(j)) := H^n_\Dcal(X(\Cbb), \Rbb(j))^{+} \quad \text{for } n, j\ge 0.
	\end{equation}

    \begin{ex}\label{Deligne coho of number field}
		Let $k$ be a number field and $n> 1$ be an integer. Let $r_1$ and $2r_2$ be the numbers of real and complex embeddings from $k$ to $\Cbb$, respectively. As $k \otimes_\Qbb \Cbb \simeq \oplus_{\sigma : k \hookrightarrow \Cbb} \Cbb$, we have
		\begin{equation*}
			\begin{aligned}
				H^1_\Dcal(\Spec (k \otimes_\Qbb \Cbb)/\Rbb, \Rbb(n)) &= 
				\bigoplus_{\sigma : 
					k \hookrightarrow \Cbb} H^1_\Dcal(\Spec(k \otimes_\sigma\Cbb), \Rbb(n))^+\\
				&= \bigoplus_{\sigma : 
					k \hookrightarrow \Cbb} H^0(\Spec(k \otimes_\sigma \Cbb), \Rbb(n-1))^+ \\
				&= \oplus_{\sigma: k \hookrightarrow \Cbb} \begin{cases}  H^0(\Spec(k \otimes_\sigma \Cbb), \Rbb)^- & \text{ if } n \text{ even}\\
					H^0(\Spec(k \otimes_\sigma \Cbb), \Rbb)^+ & \text{ if } n \text{ odd}
				\end{cases}  \\
				&\simeq 
				\begin{cases}
					\Rbb^{r_2} & \text{ if } n \text{ even},\\
					\Rbb^{r_1 + r_2} & \text{ if } n \text{ odd}.
				\end{cases}    
			\end{aligned}
		\end{equation*}
	\end{ex}

\subsection{Motivic cohomology} In this section, let us recall briefly the definition of motivic cohomology. Let $k$ be a field of characteristic 0. Recall that Grothendieck constructed  the category $\CHM(k, \Qbb)$ of Chow motives with $\Qbb$-coefficient and a contravariant functor $h : \SmProj(k) \to \CHM(k, \Qbb)$ which sends a smooth projective variety $X$ to the Chow motive $h(X)$. For example, let us consider the Chow motive of the projective line. Denote by $\pi : \Pbb^1_k \to \Spec k$ the structure morphism and $x : \Spec k \to \Pbb^1_k$ any rational point. Let $\pi^* : h(\Spec k) \to h(\Pbb^1_k)$ and $x^* : h(\Pbb^1_k) \to h(\Spec k)$ be the corresponding morphisms in $\CHM(k, \Qbb)$. Since $x^* \circ \pi^* = \id$, the map $\pi^*$ defines a subobject $\mathbbm{1} := h(\Spec k)$ of $h(\Pbb^1_k)$. As the category $\CHM(k, \Qbb)$ is pseudo-abelian, we get the following canonical decomposition
\begin{equation}\label{P1}
    h(\Pbb^1_k) = \mathbbm{1} \oplus \Lbb,
\end{equation}
where $\Lbb$ is called the Lefschetz motive. 

Voevodsky constructed the category $\DM_\gm(k, \Qbb)$ of \textit{geometrical motives} over $k$ with $\Qbb$-coefficient, and a covariant functor 
\begin{equation}
    M : \Sm(k) \to \DM_\gm(k, \Qbb),
\end{equation}
which sends a smooth variety $X$ to the \textit{motive} $M(X)$ (see \cite[Lecture 14]{MVW06}).  There is a fully faithful contravariant functor from $\CHM(k, \Qbb)$ to  $\DM_\gm(k, \Qbb)$, which sends the Chow motive $h(X)$ of a smooth projective variety $X$ to the motive $M(X)$ (see \cite[Lecture 20]{MVW06}). The corresponding decomposition of $M(\Pbb^1_k)$ in $\DM_\gm(k, \Qbb)$ is given by
$$M(\Pbb^1_k) = \Qbb(0) \oplus \Qbb(1)[2],$$
where $\Qbb(0) := M(\Spec k)$ and $\Qbb(1)$ is called the Tate motive. Denote by $\Qbb(j) = \Qbb(1)^{\otimes j}$.
\begin{definition}[{\textbf{Motivic cohomology}, \cite[Definition 14.17]{MVW06}}]
    Let $X$ be a smooth variety over $k$. Let $n, j \in \Zbb$. The motivic cohomology of $X$ with $\Qbb$-coefficient is defined by 
    \begin{equation*}
        H^n_\Mcal(X, \Qbb(j)) = \Hom_{\DM_\gm(k, \Qbb)} (M(X), \Qbb(j)[n]).
    \end{equation*}
\end{definition}

Let $X$ be a smooth variety over $\Rbb$ or $\Cbb$. Recall that Beilinson defined a $\Qbb$-linear map (see e.g., \cite{Nek13}) 
\begin{equation}\label{reg}
    \reg^{n, j}_X : H^n_\Mcal (X, \Qbb(j)) \to H^n_\Dcal(X, \Rbb(j)).
\end{equation}
When $X$ is defined over a number field $k$,  the Beilinson regulator map is the composition 
\begin{equation}\label{282}
     H^n_\Mcal(X, \Qbb(j)) \xrightarrow{\text{base change}} H^n_\Mcal(X_\Rbb, \Qbb(j)) \xrightarrow{\reg_{X_\Rbb}} H^n_\Dcal (X_\Rbb, \Rbb(j)),
\end{equation}
where $X_\Rbb := X \times_\Qbb \Rbb$. In Section \ref{section 1.3}, we will construct an element in motivic cohomology and relate its Beilinson regulator to the Deligne-Beilinson cohomology element constructed in Section \ref{main1} below.

	\subsection{Goncharov polylogarithmic complexes}\label{Gon}

	Let $F$ be a field of characteristic 0. Goncharov \cite{Gon95} defined the groups $\Bscr_n(F)$ for $n \ge 1$, to be the quotient of the $\Qbb$-vector space $\Qbb[\Pbb_F^1]$ by a certain (inductively defined) subspace $\Rscr_n(F)$
	\begin{equation}
		\Bscr_n(F) := \Qbb[\Pbb_F^1] / \Rscr_n(F) \text{ for } n \ge 1,
	\end{equation}
	By definition, $\Rscr_1(F) := \left<\{x\} + \{y\} - \{xy\}, x, y \in F^\times; \{0\}; \{\infty\}\right>$. De Jeu showed that $\Rscr_2(F)$ is generated by explicit relations (see \cite[Remark 5.3]{dJ00})
	\begin{equation}\label{130}
		\begin{aligned}
			\Rscr_2(F) = \left<\{x\} + \{y\} + \{1-xy\} + \left\{\frac{1-x}{1-xy}\right\} + \left\{\frac{1-y}{1-xy}\right\}, x, y \in F^\times\setminus \{1\}; \{0\}; \{\infty\}\right>.
		\end{aligned}
	\end{equation}
	Goncharov constructed a polylogarithmic complex, denoted by $\Gamma_F(n)$ (see \cite[Section 1.9]{Gon95})
	\begin{equation}\label{weightnGoncomplex}
		\Bscr_n(F) \xrightarrow{\alpha_n(1)} \Bscr_{n-1}(F) \otimes F^\times_\Qbb \xrightarrow{\alpha_n(2)} \Bscr_{n-2}(F) \otimes \bigwedge^2 F^\times_\Qbb \to \cdots \to \Bscr_2(F) \otimes \bigwedge^{n-2} F^\times_\Qbb \xrightarrow{\alpha_n(n)} \bigwedge^n F^\times_\Qbb,
	\end{equation}
	where differentials are given by
	\begin{equation*}
		\{x\}_p \otimes y_1 \wedge \dots \wedge y_{n-p} \mapsto \{x\}_{p-1} \otimes x \wedge y_1 \wedge \dots \wedge y_{n-p} \qquad \text{if } p > 2,
	\end{equation*}
	and 
	\begin{equation*}
		\{x\}_2 \otimes y_1 \wedge \dots \wedge y_{n-2} \mapsto (1-x)\wedge x \wedge y_1 \wedge \dots \wedge y_{n-2}.
	\end{equation*}
	We have $H^n (\Gamma(F, n)) \simeq H^n_\Mcal(F, \Qbb(n))$ by Matsumoto's theorem and  $H^1(\Gamma_F(2)) \xrightarrow{\simeq} H^1_\Mcal(F, \Qbb(2))$ by Suslin's theorem  (see \cite{Sus91}). Goncharov conjectured that (see \cite[ Conjecture A, p.222]{Gon95})
	\begin{equation*}
		H^p(\Gamma(F, n)) \simeq H^p_\Mcal(F, \Qbb(n)), \quad p, n \ge 1.
	\end{equation*}

	Let $k$ be a field of characteristic 0. Let $X$ be a smooth algebraic variety over $k$. Denote by $F = k(X)$ the function field of $X$. Let  $\Ecal ^j (U)$ be the space of real smooth $j$-forms on $U(\Cbb)$, the set of complex points of $U \times_\Qbb \Cbb$. Denote by $\Ecal^j(\eta_X) = \varinjlim_{U \subset X \text{ open}} \Ecal^j(U)$, where $\eta_X$ is the generic point of $X$, and $d$ is the de Rham differential. We denote by $\Ecal^j(\eta_X)(n-1) := \Ecal^j(\eta_X) \otimes \Rbb(n-1)$.
	There is a homomorphism of complexes (see {\cite[Theorem 2.2]{Gon98}})
	\begin{equation}\label{10.5.1}
		\xymatrix{\Bscr_n(F) \ar[rr]^{\alpha_n(1)\hspace{0.8cm}} \ar[d]_{r_n(1)} && \Bscr_{n-1} (F) \otimes F^\times_\Qbb \ar[rr]^{\hspace{1cm}\alpha_n(2)} \ar[d]^{r_n(2)} && \cdots \ar[r]^{\alpha_n(n)} & \bigwedge^n F^\times_\Qbb \ar[d]^{r_n(n)}\\
			\Ecal^0(\eta_X)(n-1) \ar[rr]^{d} && \Ecal^1(\eta_X)(n-1) \ar[rr]^{\qquad d} && \cdots \ar[r]^{d\hspace{1.5cm}} & \Ecal^{n-1}(\eta_X)(n-1).}
	\end{equation}
	In particular, Goncharov gave explicit formulas
	\begin{equation}\label{gonform}
		\begin{aligned}
			&r_{n}(n-1)(\{f\}_2 \otimes g_1 \wedge \cdots \wedge g_{n-2}) := \\
			&\hspace{4cm} i D(f)  \mathrm{Alt}_{n-2} \left( \sum_{p=0}^{[\frac{n-2}{2}]} c_{p, n-1} \bigwedge_{k=1}^{2p} d \log |g_k| \wedge \bigwedge_{\ell=2p+1}^{n-2} d i \arg g_\ell \right)\\
			&\hspace{4cm}+ \theta(1-f, f) \mathrm{Alt}_{n-2} \left( \sum_{m=1}^{[\frac{n-1}{2}]} \dfrac{c_{m-1, n-2}}{(2m+1)} \log |g_1| \bigwedge_{k=2}^{2m-1}d \log|g_k| \wedge\bigwedge_{\ell = 2m}^{n-2} di \arg g_\ell\right),\\
			&r_n(n) (g_1\wedge\cdots \wedge g_n) :=\\
			&\hspace{2cm}\mathrm{Alt}_n \left(\sum_{j=0}^{\left[\frac{n-1}{2}\right]} c_{j,n} \log|g_1| d \log |g_2|\wedge \cdots \wedge d \log|g_{2j+1}| \wedge d i \arg(g_{2j+2}) \wedge \cdots \wedge d i \arg(g_n) \right),
		\end{aligned}
	\end{equation}
	where $D : \Pbb^1(\Cbb) \to \Rbb$ is the Bloch-Wigner dilogarithm function
	\begin{equation}\label{dilog}
		D(z) = \begin{cases}
			\mathrm{Im}\left(\sum_{k=1}^\infty \frac{z^k}{k^2}\right) + \arg (1-z) \log |z| &(|z| \le 1),\\
			-D(1/z) &(|z| \ge 1),
		\end{cases}
	\end{equation}
	and
	\begin{equation}\label{theta}
		\begin{aligned}
			\theta(f, g) &:=  \log|f| d \log |g| - \log |g| d \log |f|, \\
			c_{j,n} &:= \dfrac{1}{(2j+1)!(n-2j-1)!},\\
			\mathrm{Alt}_n G(x_1, \dots, x_n) &:= - \sum_{\sigma \in S_n} \sgn(\sigma) G(x_{\sigma(1)}, \dots, x_{\sigma(n)}).
		\end{aligned}
	\end{equation}

	\begin{cor}[Goncharov]\label{gonreg}
		Let $X$ be a smooth algebraic variety over  a field $k$ of characteristic 0. Denote by $F = k(X)$ the function field of $X$. The maps $r_n(i)$ for $i \le n$ induce maps on cohomology groups
		\begin{equation}\label{rniF}
			r_n(i)_F : H^i (\Gamma(F, n)) \to H^i_\Dcal(F, \Rbb(n)),
		\end{equation}
		where $H^i_\Dcal(F, \Rbb(n)) := \varinjlim_{U \subset X \text{ open }} H^i_\Dcal (U_\Rbb, \Rbb(n))$ and $H_\Dcal^i (U_\Rbb, \Rbb(n))$ denotes the Deligne-Beilinson cohomology of $U_\Rbb := U \otimes_\Qbb \Rbb.$ 
	\end{cor}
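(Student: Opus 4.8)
The plan is to leverage that \eqref{10.5.1} is already a \emph{morphism of complexes}, which is exactly the content of \cite[Theorem 2.2]{Gon98}. Once this is granted, each $r_n(i)$ carries $i$-cocycles of $\Gamma_F(n)$ to $i$-cocycles of the bottom row $\Ecal^\bullet(\eta_X)(n-1)$ and $i$-coboundaries to $i$-coboundaries, so it automatically induces a well-defined map $H^i(\Gamma_F(n)) \to H^i(\Ecal^\bullet(\eta_X)(n-1))$. The entire substance of the corollary is therefore the identification, for $i \le n$, of this target with the Deligne--Beilinson cohomology of the function field, i.e. the assertion that a de Rham cocycle at the generic point produced by $r_n$ genuinely represents a class in $\varinjlim_{U} H^i_\Dcal(U_\Rbb, \Rbb(n))$.

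First I would pass to Burgos' model. Since filtered colimits are exact, $\varinjlim_U H^i_\Dcal(U_\Rbb, \Rbb(n)) = \varinjlim_U H^i(E_n(U_\Rbb)^\bullet) = H^i\bigl(\varinjlim_U E_n(U_\Rbb)^\bullet\bigr)$ by \eqref{52}, so it suffices to compare complexes. The key simplification is that in weight $j = n$ the Hodge-type truncation in \eqref{54} is \emph{vacuous} in the relevant range: a form of total degree $i-1 \le n-1$ has all its bidegree $(p,q)$ components with $p, q \le n-1 < n$, so for $i \le n$ one gets $E_n(U)^i = (2\pi i)^{n-1} E^{i-1}_{\log, \Rbb}(U)$ and the Burgos differential reduces to $\pm d$. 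Thus, in degrees $\le n$, Burgos' complex is precisely the twisted real de Rham complex of forms with logarithmic singularities, matching the bottom row of \eqref{10.5.1} up to the distinction between log-smooth and arbitrary smooth forms and an inessential sign on the differential.

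Bridging that last distinction is what I expect to be the main obstacle. Two points must be checked: that the image forms have \emph{logarithmic} (not arbitrary) singularities, and, at the top degree $i=n$, that they satisfy the extra Hodge-type condition $\pr_n(d(\cdot)) = 0$ encoded in $E_n(U_\Rbb)^\bullet$, so that they lie in $E_n(U_\Rbb)^i$ and not merely in $\Ecal^{i-1}(\eta_X)(n-1)$. Both are read off from the explicit formulas \eqref{gonform}: the forms are assembled from $\log|g_\ell|$, the real and imaginary parts $d\log|g_\ell|$ and $d\arg g_\ell$ of $dg_\ell/g_\ell$, and the bounded functions $D(f)$ and $\theta(1-f,f)$, for the finitely many rational functions $f, g_\ell \in F^\times$ occurring in a chosen representative. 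Taking $U \subset X$ to be the locus where all these functions are invertible, each form is smooth on $U(\Cbb)$ and, at a good compactification $(\bar U, D)$ whose boundary $D$ carries the zeros and poles of the $g_\ell$ and $f$, has only logarithmic poles; the integrability underlying the top-degree Hodge condition is already encoded in \eqref{10.5.1} being a morphism of complexes. Hence $r_n(i)(\xi) \in E_n(U_\Rbb)^i$ and defines a class in $H^i_\Dcal(U_\Rbb, \Rbb(n))$, whence in $H^i_\Dcal(F, \Rbb(n))$. Independence of the representing cocycle and of $U$ follows from the morphism-of-complexes property together with compatibility with the colimit, and the class automatically lands in the $F_\infty$-invariant $(+)$-part since the forms are $\Rbb(n-1)$-valued and built from the $F_\infty$-equivariant quantities $|g_\ell|$ and $\arg g_\ell$.
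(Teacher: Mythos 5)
The genuine gap is at the top degree $i=n$, which is the one case the paper has to argue by hand. You claim that the Burgos cocycle condition $\pr_n\bigl(d\,(r_n(n)(\xi))\bigr)=0$ is ``already encoded in \eqref{10.5.1} being a morphism of complexes.'' It is not: the diagram \eqref{10.5.1} terminates at $\Ecal^{n-1}(\eta_X)(n-1)$, so its commutativity only constrains $d\circ r_n(i)$ for $i\le n-1$ (in particular $d\circ r_n(n-1)=r_n(n)\circ\alpha_n(n)$); it says nothing about $d$ applied to the image of the \emph{last} vertical map $r_n(n)$, which is exactly what the degree-$n$ cocycle condition concerns. Relatedly, your claim that the Burgos differential ``reduces to $\pm d$'' for all $i\le n$ fails precisely at $i=n$: there $d\omega$ has total degree $n$, so $\pr_n$ genuinely kills its $(n,0)$ and $(0,n)$ components, and the condition $\pr_n(d\omega)=0$ is neither vacuous nor a formal consequence of the diagram. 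The paper closes this point by an explicit computation: by Goncharov's formula, $d\,r_n(n)(f_1\wedge\cdots\wedge f_n)=\pi_n(d\log f_1\wedge\cdots\wedge d\log f_n)$, a real projection of the holomorphic form $d\log f_1\wedge\cdots\wedge d\log f_n$ of pure type $(n,0)$ together with its conjugate of type $(0,n)$, and $\pr_n$ annihilates exactly these two types. Without this Hodge-type argument (or an equivalent substitute) your map $r_n(n)_F$ is simply not defined.

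For $i\le n-1$ your route is viable but does more work than the paper, and the extra work is not as automatic as you suggest. The paper's proof just cites \cite{Bur97}, \cite{BZ20} and \cite[Chapter 2]{Tri24} for the fact that the colimit complex $\Ecal^\bullet(\eta_X)(n-1)$ of \emph{arbitrary} smooth forms computes $H^i_\Dcal(F,\Rbb(n))$ in degrees $1,\dots,n-1$, so it never needs to know that the Goncharov representatives are logarithmic. If you insist on working inside Burgos' log complexes, then ``read off from \eqref{gonform}'' is too optimistic: the Bloch--Wigner function $D(f)$ is not among Burgos' generators, and placing $iD(f)\cdot(\cdots)$ in $E^\bullet_{\log}$ requires the local expansions $D(f)=(\text{smooth})+(\text{smooth})\cdot\log|z|$ near $f\in\{0,1,\infty\}$ coming from the functional equations of $D$, together with a compactification adapted to all the $f_j$, $g_{j,k}$ simultaneously. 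Likewise, landing in the $F_\dR$-invariant part is not ``automatic''; it needs the kind of conjugation computation carried out in Lemma \ref{7.2.7}.
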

	\begin{proof}
		The following complex in degree 1 to $n-1$ 
		$$ \Ecal^0(\eta_X)(n-1) \xrightarrow{d} \Ecal^1(\eta_X)(n-1) \xrightarrow{d} \cdots \xrightarrow{d} \Ecal^{n-1}(\eta_X)(n-1)$$
		computes the weight $n$ Deligne cohomology of the function field from degree 1 to $n-1$ (see \cite{Bur97}, \cite{BZ20}, \cite[Chapter 2]{Tri24}). Therefore, the homomorphism of complexes  $r_n(i)$ induces maps on cohomology groups
		$$r_n(i)_F : H^i (\Gamma(F, n)) \to H^i_\Dcal(F, \Rbb(n)) \quad \text{ for } i = 1, \dots, n-1.$$ In degree $n$, we have that $r_n(n)(f_1\wedge\cdots\wedge f_n)$ defines an element in $H^n_\Dcal(F, \Rbb(n))$ since 
		\begin{equation*}
			(\pr_{n} \circ d) (r_n(n)(f_1\wedge\cdots\wedge f_n)) = \pr_n (\pi_n(d \log f_1 \wedge \cdots \wedge d \log f_n)) = 0,
		\end{equation*}
		where $\pr_n : \oplus_{p, q} \to \oplus_{p, q < n}$.
	\end{proof}

	\subsection{Mahler measure and Deligne-Beilinson  cohomology}\label{main1}
	Let $P \in \Cbb[x_1, \dots, x_n]$ be an irreducible polynomial. Let $V_P = \{(x_1, \dots, x_n) \in (\Cbb^\times)^k : P(x, \dots, x_n) = 0\}$ be the zero locus of $P$ in $(\Cbb^\times)^k$ and let $V_P^\text{reg}$ be its smooth part. Deninger defined
	\begin{equation}\label{Denchain}
		\Gamma = V_P \cap \{(x_1, \dots, x_n) \in (\Cbb^\times)^n : |x_1| = \dots = |x_{n-1}|= 1, |x_n|\ge 1\},
	\end{equation}
	which is a real manifold of dimension $n-1$ with the orientation induced from the torus. By Jensen's formula, we have
	\begin{equation}\label{7}
		\m(P) = \m(\tilde P) + \dfrac{1}{(2\pi i )^{n-1}}\int_{\Gamma} \log|x_n| \ \dfrac{d x_1}{x_1} \wedge \cdots \wedge \dfrac{d x_{n-1}}{x_{n-1}},
	\end{equation}
	where $\tilde{P}\in \Cbb[x_1, \dots, x_{n-1}]$ is the leading coefficient of $P$ by considering $P$ as a polynomial in $x_n$. Deninger introduced the following differential form defined on $\Gbb_m^n$  
	\begin{equation}\label{Denform}
		\eta(x_1, \dots, x_n) = \dfrac{2^{n-1}}{n!} \sum_{\sigma \in \mathcal{S}_n} \mathrm{sgn}(\sigma) \sum_{j=1}^n (-1)^{j-1} \varepsilon_{\sigma_1} \  \bar 
		\partial \varepsilon_{\sigma_2} \wedge \cdots  
		\wedge \bar \partial \varepsilon_{\sigma_j} \wedge \partial \varepsilon_{\sigma_{j+1}} \wedge \cdots \wedge
		\partial \varepsilon_{\sigma_n},
	\end{equation}
	where $\varepsilon_j = \log|x_j|$. Deniner then showed that $\eta (x_1, \dots, x_n)$ is a representative of the regulator $\reg^{n, n}_{\Gbb_m^n}(\{x_1, \dots, x_n\})$ of the Milnor symbol $\{x_1, \dots, x_n\} \in H^n_\Mcal(\Gbb_m^n, \Qbb(n))$. Since
	\begin{equation}
		\begin{aligned}
			\eta(x_1, \dots, x_n)|_{\Gamma \cap V_P^\reg} 
			= (-1)^{n-1} \log|x_n| \dfrac{d x_1}{x_1} \wedge \cdots \wedge \dfrac{d x_{n-1}}{x_{n-1}},
		\end{aligned}
	\end{equation}
	we arrive at the following result of Deninger (see [\cite{Den97}, Proposition 3.3]\label{Deningermethod}). If $\Gamma \subset V_P^\reg$ then
	\begin{equation}\label{Deningerformulae}
		\m (P) = \m (\tilde P) + \dfrac{(-1)^{n-1}}{(2\pi i)^{n-1}} \int_\Gamma \eta(x_1, \dots, x_n).
	\end{equation}
The boundary of $\Gamma$ is given by $\partial\Gamma = V_P \cap \{(x_1, \dots,  x_n) \in (\Cbb^\times)^n : |x_1| = \cdots = |x_n| = 1\}.$
	If $P$ does not vanish on $\Tbb^n$ (i.e., $\partial \Gamma = \varnothing$), then $\Gamma$ defines an element in $H_{k-1}(V_P^\reg, \Qbb)$, and we can write
	\begin{equation*}
		\m (P) = \m (\tilde P) + \dfrac{(-1)^{n-1}}{(2 \pi i)^{n-1}} \left<[\Gamma], \reg^{n,n}_{V_P^\reg} (\{x_1, \dots, x_n\})\right>,
	\end{equation*}
	where the pairing is $ \left< \cdot, \cdot\right> : H_{k-1}(V_P^\reg, \Qbb) \times H^{k-1}(V_P^\reg, \Rbb(n-1)) \to \Rbb(n-1).$ 
     Let us consider the case $\partial \Gamma \neq \varnothing$.  Maillot suggested to consider the following variety
	\begin{equation}\label{Maillot}
		W_P := V_P \cap V_{P^*},
	\end{equation}
	 where $P^* (x_1, \dots, x_n) :=  \bar P(x_1^{-1}, \dots, x_n^{-1})$. We call $W_P$ the \textit{Maillot variety}. From now on, we assume that $P$ has $\Qbb$-coefficients. Then
	$\partial \Gamma$ is contained in $W_P$. We introduce the definition of \textit{exact polynomials}.
	\begin{definition}[\textbf{Exact polynomials}]\label{a} Let $P \in \Qbb[x_1, \dots, x_n]$. We say that $P$ is exact if the class $\reg^{n,n}_{V_P^\reg}(\{x_1, \dots, x_n\})$ is trivial, i.e., the differential form $\eta(x_1, \dots, x_n)$ \eqref{Denform} is an exact form on $V_P^\reg$.
	\end{definition}

	We have the following lemma. 
	\begin{lm}\label{DenGon}
		We have $\eta(x_1, \dots, x_n) = -r_n(n)(x_1\wedge \dots \wedge x_n)$, where $r_n(n)(x_1 \wedge \dots \wedge x_n)$ is the differential form of Goncharov  \eqref{gonform}.
	\end{lm}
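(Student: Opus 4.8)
The plan is to rewrite both forms in the common building blocks $\partial\varepsilon_j,\bar\partial\varepsilon_j$ (with $\varepsilon_j=\log|x_j|$) and match coefficients. Since $\partial\varepsilon_j=\tfrac12\,dx_j/x_j$ and $\bar\partial\varepsilon_j=\tfrac12\,d\bar x_j/\bar x_j$, we have the dictionary $d\log|x_j|=\partial\varepsilon_j+\bar\partial\varepsilon_j$ and $d\,i\arg(x_j)=\partial\varepsilon_j-\bar\partial\varepsilon_j$, which we substitute into Goncharov's form $r_n(n)(x_1\wedge\cdots\wedge x_n)$. After substitution, both $\eta$ and $r_n(n)(x_1\wedge\cdots\wedge x_n)$ become $\Rbb$-linear combinations of the monomials $\omega_T=\varepsilon_{i_0}\bigwedge_{i\neq i_0}c_i$, where exactly one index $i_0$ carries the factor $\varepsilon_{i_0}$ and each remaining index contributes either $c_i=\partial\varepsilon_i$ or $c_i=\bar\partial\varepsilon_i$, never both. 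Both forms are antisymmetric under relabelling $x_i\mapsto x_{\tau(i)}$ --- for $\eta$ by reindexing the sum in \eqref{Denform} through $\tau\circ\sigma$, and for $r_n(n)$ because it is built from $\mathrm{Alt}_n$ --- so each is determined by its coefficients on a complete set of orbit representatives. Such a set is $\mu_p:=\varepsilon_1\wedge\bigwedge_{k=2}^{p+1}\partial\varepsilon_k\wedge\bigwedge_{k=p+2}^{n}\bar\partial\varepsilon_k$ for $0\le p\le n-1$, with $q:=n-1-p$. It therefore suffices to show that $\mu_p$ has the same coefficient in $\eta$ and in $-r_n(n)(x_1\wedge\cdots\wedge x_n)$ for every $p$.

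For Deninger's form, a term of \eqref{Denform} contributes to $\mu_p$ only when $\sigma_1=1$, the $\bar\partial$-block $\{\sigma_2,\dots,\sigma_j\}$ equals $\{p+2,\dots,n\}$ (forcing $j=q+1$), and the $\partial$-block $\{\sigma_{j+1},\dots,\sigma_n\}$ equals $\{2,\dots,p+1\}$. There are $p!\,q!$ such $\sigma$, each carrying the sign $(-1)^{j-1}=(-1)^q$, and swapping two indices inside a single block flips both $\sgn(\sigma)$ and the sign needed to reorder the wedge into $\mu_p$; hence $\sgn(\sigma)$ times that reordering sign is constant over all contributing $\sigma$ (and equal to $1$ for the block-swap representative). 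Thus the coefficient of $\mu_p$ in $\eta$ is $\tfrac{2^{n-1}}{n!}\,p!\,q!\,(-1)^q$.

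For Goncharov's form I would compute the coefficient of $\mu_p$ in $-r_n(n)(x_1\wedge\cdots\wedge x_n)=\sum_{\tau}\sgn(\tau)\,\tau\cdot\big(\sum_j G_j\big)$, where $G_j=c_{j,n}\,\varepsilon_1\bigwedge_{k=2}^{2j+1}(\partial\varepsilon_k+\bar\partial\varepsilon_k)\wedge\bigwedge_{k=2j+2}^{n}(\partial\varepsilon_k-\bar\partial\varepsilon_k)$ is the $j$-th summand after substitution. Only $\tau$ with $\tau(1)=1$ contribute; write $\pi=\tau|_{\{2,\dots,n\}}$. Expanding the plus- and minus-factors of $\tau\cdot G_j$ forces a unique selection reproducing $\mu_p$, with sign $(-1)^{|\pi(M_j)\cap B|}$ for $M_j=\{2j+2,\dots,n\}$ and $B=\{p+2,\dots,n\}$, and reordering the resulting wedge contributes a further $\sgn(\pi)$ that cancels the $\sgn(\tau)=\sgn(\pi)$ coming from $\mathrm{Alt}_n$. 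Summing over $\pi$ by conditioning on $\pi|_{M_j}$ and using $\sum_{\phi:M_j\hookrightarrow\{2,\dots,n\}}\prod_{x\in M_j}f(\phi(x))=m_j!\,e_{m_j}$, with $f=-1$ on $B$ and $f=1$ elsewhere (so $\sum_m e_m t^m=(1-t)^q(1+t)^p$ and $m_j=n-2j-1$), collapses the $\tau$-sum; the factorials combine through $c_{j,n}\,(2j)!\,(n-2j-1)!=\tfrac1{2j+1}$ to give
\[
\text{coefficient of }\mu_p\text{ in }-r_n(n)(x_1\wedge\cdots\wedge x_n)\;=\;\sum_{j=0}^{[\frac{n-1}{2}]}\frac{1}{2j+1}\,[t^{\,n-2j-1}]\big((1-t)^q(1+t)^p\big),
\]
where $[t^\ell]$ extracts the coefficient of $t^\ell$.

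The main obstacle is the closed evaluation of this last sum; the integral evaluation after that is routine. Writing $\tfrac1{n-\ell}=\int_0^1 s^{n-\ell-1}\,ds$ and using the parity identity $\sum_{\ell\equiv n-1\,(2)}[t^\ell]F\,u^\ell=\tfrac12\big(F(u)+(-1)^{n-1}F(-u)\big)$ for $F(t)=(1-t)^q(1+t)^p$, the substitution $u=1/s$ gives $s^{n-1}F(1/s)=(-1)^q(1-s)^q(1+s)^p$ and $(-1)^{n-1}s^{n-1}F(-1/s)=(-1)^q(1-s)^p(1+s)^q$. Hence the sum equals $\tfrac{(-1)^q}{2}\int_0^1\big[(1-s)^q(1+s)^p+(1-s)^p(1+s)^q\big]\,ds=\tfrac{(-1)^q}{2}\int_{-1}^1(1-s)^q(1+s)^p\,ds$, and the substitution $s=2u-1$ identifies the last integral with a Beta integral, $\int_{-1}^1(1-s)^q(1+s)^p\,ds=2^{\,n}\,\tfrac{p!\,q!}{n!}$. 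This yields coefficient $\tfrac{2^{n-1}}{n!}\,p!\,q!\,(-1)^q$, matching the value found for $\eta$, and completes the comparison. I expect the sign bookkeeping in the $\mathrm{Alt}_n$-symmetrization (the cancellation of $\sgn(\pi)$) and the parity-restricted generating-function step to be where the most care is needed.
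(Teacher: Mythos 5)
Your proof is correct, and although it starts from the same place as the paper --- substituting $d\log|x_k| = \partial\varepsilon_k + \bar\partial\varepsilon_k$ and $d\,i\arg x_k = \partial\varepsilon_k - \bar\partial\varepsilon_k$ into Goncharov's formula and matching coefficients of monomials in the $\partial\varepsilon_k$, $\bar\partial\varepsilon_k$ --- the computational core is genuinely different. The paper works with the antisymmetrized monomial classes: it computes the coefficients of the pure classes (all $\partial$ or all $\bar\partial$) as $\sum_j c_{j,n} = \tfrac{2^{n-1}}{n!}$, then for the class with exactly one $\bar\partial$ it rearranges the permutation sum (via the cycle $\tau$ of length $2k+1$) into a signed combination of the $c_{j,n}$'s which it asserts equals $-\tfrac{2^{n-1}}{n!}$, and it disposes of the general class with $j-1$ bars by ``similarly, one can prove''. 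You instead fix a single monomial $\mu_p$ with $p$ unbarred and $q=n-1-p$ barred factors, reduce the sum over permutations to elementary symmetric polynomials, arriving at the closed form
\begin{equation*}
\sum_{j=0}^{[\frac{n-1}{2}]}\frac{1}{2j+1}\,[t^{\,n-2j-1}]\bigl((1-t)^q(1+t)^p\bigr),
\end{equation*}
and then evaluate this for \emph{all} $p$ simultaneously using the integral representation $\tfrac{1}{n-\ell}=\int_0^1 s^{n-\ell-1}\,ds$, a parity splitting of the polynomial $(1-t)^q(1+t)^p$, and the Beta integral $\int_{-1}^1(1-s)^q(1+s)^p\,ds = 2^n\,\tfrac{p!\,q!}{n!}$, obtaining $(-1)^q\tfrac{2^{n-1}}{n!}p!\,q!$, which matches the coefficient of $\mu_p$ in $\eta$ computed by your block-counting argument (your per-monomial bookkeeping is consistent with the paper's per-class bookkeeping: for $p=q=1$, $n=3$, both give $-\tfrac{2}{3}$). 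What your route buys is uniformity and completeness: one computation covers every monomial type, and the combinatorial identity among the $c_{j,n}$'s is actually proved rather than asserted --- the paper verifies only the extreme and one-bar cases explicitly and leaves the rest to ``similarly''. What it costs is the heavier generating-function and Beta-integral machinery in place of the paper's elementary, if laborious, sign manipulations; the two delicate points you flagged (the cancellation of $\sgn(\pi)$ against the wedge-reordering sign, and the parity-restricted extraction) are exactly the right places to be careful, and both steps are sound as written.
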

	
	\begin{proof} Let $m = \left[\frac{n-1}{2}\right]$. Recall that  
		\begin{align*}
			&r_n(n)(x_1, \dots, x_n) \\
			&= \mathrm{Alt}_n \left(\sum_{j=0}^{m} c_{j, n} \log |x_1| d \log|x_2| \wedge \cdots \wedge d \log|x_{2j+1}| \wedge d i \arg x_{2j+2} \wedge \cdots \wedge d i \arg x_n\right)\\
			&= - \sum_{\sigma \in \mathcal{S}_n} \sgn(\sigma)  \sum_{j=0}^{m} c_{j, n} \log |x_{\sigma_1}| d \log|x_{\sigma_2}| \wedge \cdots \wedge d \log|x_{\sigma_{2j+1}}| \wedge d i \arg x_{\sigma_{2j+2}} \wedge \cdots \wedge d i \arg x_{\sigma_n},
		\end{align*}
		where $c_{j, n} = \frac{1}{(2j+1)!(n-2j-1)!}$. We have
		\begin{equation}\label{sum}
			\begin{aligned}
				&\sum_{j=0}^{m} c_{j, n} \log |x_1| d \log|x_2| \wedge \cdots \wedge d \log|x_{2j+1}| \wedge d i \arg x_{2j+2} \wedge \cdots \wedge d i \arg x_n\\
				&= \sum_{j=0}^{m} c_{j, n} \varepsilon_1 (\partial \varepsilon_2 + \bar \partial \varepsilon_2) \wedge \cdots \wedge (\partial \varepsilon_{2j+1} + \bar \partial \epsilon_{2j+1}) \wedge (\partial \varepsilon_{2j+2} - \bar \partial \varepsilon_{2j+2}) \wedge \cdots \wedge (\partial \varepsilon_n - \bar \partial \varepsilon_n)\\
				&= c_{0,n} \  \varepsilon_1 \ (\partial \varepsilon_2 - \bar \partial \varepsilon_2) \wedge \cdots \wedge (\partial \varepsilon_n - \bar \partial \varepsilon_n) \\
				&\hspace{2cm} \ + c_{1,n} \  \varepsilon_1 \ (\partial \varepsilon_2 + \bar \partial \varepsilon_2) \wedge (\partial \varepsilon_3 + \bar \partial \varepsilon_3) \wedge (\partial \varepsilon_4 - \bar \partial \varepsilon_4) \wedge \cdots \wedge (\partial \varepsilon_n - \bar \partial \varepsilon_n)  \\
				&\hspace{2cm} \ +  c_{2,n} \  \varepsilon_1 \ (\partial \varepsilon_2 + \bar \partial \varepsilon_2) \wedge \cdots \wedge (\partial \varepsilon_5 + \bar \partial \varepsilon_5) \wedge (\partial \varepsilon_6 - \bar \partial \varepsilon_6) \wedge \cdots \wedge (\partial \varepsilon_n - \bar \partial \varepsilon_n) \\
				&\hspace{2cm} \quad \vdots\\
				&\hspace{2cm} \ +  c_{m,n} \  \varepsilon_1 \ (\partial \varepsilon_2 + \bar \partial \varepsilon_2) \wedge (\partial \varepsilon_3 + \bar \partial \varepsilon_3)  \wedge \cdots \wedge (\partial \varepsilon_{n-1} + \bar \partial \varepsilon_{n-1}) \wedge (\partial \varepsilon_n - \bar \partial \varepsilon_n) \\
				&=  \left(\sum_{j=0}^m c_{j, n}\right) \varepsilon_1 \ \partial \varepsilon_2 \wedge \cdots \wedge \partial \varepsilon_n \\
				& + \sum_{k=0}^{m-1} \left[\left(-c_{0,n} - \cdots - c_{k,n} + c_{k+1, n} +\cdots + c_{m, n}\right) \varepsilon_1 \ \partial \varepsilon_2  \wedge \partial \varepsilon_3 \wedge \cdots \wedge \bar \partial \varepsilon_{2k+2} \wedge \partial \varepsilon_{2k+3} \wedge \cdots \wedge \partial \varepsilon_n \right. \\
				& \left. + \left(-c_{0,n} - \cdots - c_{k,n} + c_{k+1, n} +\cdots + c_{m, n}\right) \varepsilon_1 \ \partial \varepsilon_2  \wedge \partial \varepsilon_3 \wedge \cdots \wedge  \partial \varepsilon_{2k+2} \wedge \bar \partial \varepsilon_{2k+3} \wedge \cdots \wedge \partial \varepsilon_n \right] \\
				&+ \left(-c_{0,n}  -\cdots - c_{m, n}\right) \varepsilon_1 \ \partial \varepsilon_2  \wedge \partial \varepsilon_3 \wedge \cdots \wedge \partial \varepsilon_{n-1}  \wedge \bar \partial \varepsilon_n\\
				&\ \vdots\\
				& +  \left(\sum_{j=0}^m (-1)^{n-2j-1} c_{j, n}\right) \varepsilon_1 \ \bar \partial \varepsilon_2 \wedge \cdots \wedge \bar \partial \varepsilon_n.
			\end{aligned}
		\end{equation}
		Hence the coefficients of $\varepsilon_1 \ \partial \varepsilon_2 \wedge \cdots \wedge \partial \varepsilon_n$ and $\varepsilon_1 \ \bar \partial \varepsilon_2 \wedge \cdots \wedge \bar \partial \varepsilon_n$ in this sum are $\sum_{j=0}^m c_{j, n} = \frac{2^{n-1}}{n!}$ and $$\sum_{j=0}^m (-1)^{n-2j-1} c_{j, n} = (-1)^{n-1} \sum_{j=0}^m c_{j, n} = (-1)^{n-1} \frac{2^{n-1}}{n!},$$
		respectively. To compute the coefficient of 
		$\sum_{\sigma \in \mathcal{S}_n} \mathrm{sgn}(\sigma) \varepsilon_{\sigma_1} \ \bar \partial \varepsilon_{\sigma_2}  \wedge \partial \varepsilon_{\sigma_3} \wedge \cdots \wedge \partial \varepsilon_{\sigma_n}$
		in $r_n(n)(x_1 \wedge \cdots \wedge x_n)$, let us see how the coefficients of 
		\begin{equation}\label{2k+2}
			\varepsilon_1 \ \partial \varepsilon_2  \wedge \partial \varepsilon_3 \wedge \cdots \wedge \bar \partial \varepsilon_{2k+2} \wedge \partial \varepsilon_{2k+3} \wedge \cdots \wedge \partial \varepsilon_n,
		\end{equation}
		and 
		\begin{equation}\label{2k+3}
			\varepsilon_1 \ \partial \varepsilon_2  \wedge \partial \varepsilon_3 \wedge \cdots \wedge  \partial \varepsilon_{2k+2} \wedge \bar \partial \varepsilon_{2k+3} \wedge \cdots \wedge \partial \varepsilon_n
		\end{equation}
		change after we transform them into the form 
		\begin{equation}\label{eps2}
			\varepsilon_1 \ \bar \partial \varepsilon_2  \wedge \partial \varepsilon_3 \wedge \cdots \wedge \partial \varepsilon_n.
		\end{equation}
		To transform \eqref{2k+2} into \eqref{eps2}, we first  permute $\bar \partial \varepsilon_{2k+2}$ $2k$ times to get
		\begin{equation}\label{2k+2'}
			\varepsilon_1 \ \bar \partial \varepsilon_{2k+2}  \wedge \partial \varepsilon_2 \wedge \cdots \wedge  \partial \varepsilon_{2k+1} \wedge \partial \varepsilon_{2k+3} \wedge \cdots \wedge \partial \varepsilon_n.
		\end{equation}
		This transformation does not change the sign of the coefficient. Now to transform \eqref{2k+2'} to \eqref{eps2}, we apply the permutation 
		\begin{equation}
			\tau = (2k+2 \quad 2 \quad 3 \quad \cdots\quad 2k+1),
		\end{equation}
		whose length is $2k+1$, hence $\mathrm{sgn}(\tau) = (-1)^{2k+1-1} = 1$. More precisely, we have
		\begin{equation*}
			\begin{aligned}
				&\sum_{\sigma\in \mathcal{S}_n} \mathrm{sgn}(\sigma) \ \varepsilon_{\sigma_1} \ \partial \varepsilon_{\sigma_2} \wedge \partial \varepsilon_{\sigma_3} \wedge \cdots \wedge \bar \partial \varepsilon_{\sigma_{2k+2}}  \wedge \partial \varepsilon_{\sigma_{2k+3}} \wedge \cdots \wedge \partial \varepsilon_{\sigma_n} \\
				&=  \sum_{\sigma \in \mathcal{S}_n} \mathrm{sgn}(\sigma)  \ \varepsilon_{\sigma_1} \  \bar \partial \varepsilon_{\sigma_{2k+2}} \wedge \partial \varepsilon_{\sigma_2}  \wedge \partial \varepsilon_{\sigma_3} \wedge \cdots \wedge \partial \varepsilon_{\sigma_{2k+1}} \wedge \partial \varepsilon_{\sigma_{2k+3}} \wedge \cdots \wedge \partial \varepsilon_{\sigma_n}\\
				&=  \sum_{\sigma \in \mathcal{S}_n} \mathrm{sgn}(\sigma \tau) \ \varepsilon_{(\sigma\tau)_1} \  \bar \partial \varepsilon_{(\sigma \tau)_{2k+2}} \wedge \partial \varepsilon_{(\sigma \tau)_2}  \wedge \partial \varepsilon_{(\sigma \tau)_3} \wedge \cdots \wedge \partial \varepsilon_{(\sigma\tau)_{2k+1}} \wedge \partial \varepsilon_{(\sigma\tau)_{2k+3}} \wedge \cdots \wedge \partial \varepsilon_{(\sigma\tau)_n}\\
				&= \sum_{\sigma \in \mathcal{S}_n}  \mathrm{sgn}(\sigma) \  \varepsilon_{\sigma_1} \ \bar \partial \varepsilon_{\sigma_2}  \wedge \partial \varepsilon_{\sigma_3}  \wedge \cdots \wedge \partial \varepsilon_{\sigma_n}.
			\end{aligned}
		\end{equation*}
		We apply the same procedure to transform \eqref{2k+3} to \eqref{eps2}, and we also get  
		\begin{equation*}
			\sum_{\sigma\in \mathcal{S}_n} \mathrm{sgn}(\sigma) 
			\ \varepsilon_{\sigma_1} \ \partial \varepsilon_{\sigma_2}  \wedge \partial \varepsilon_{\sigma_3} \wedge \cdots \wedge  \partial \varepsilon_{\sigma_{2k+2}} \wedge \bar \partial \varepsilon_{\sigma_{2k+3}} \wedge \cdots \wedge \partial \varepsilon_{\sigma_n}  = \sum_{\sigma\in \mathcal{S}_n} \mathrm{sgn}(\sigma) 
			\ \varepsilon_{\sigma_1} \ \bar \partial \varepsilon_{\sigma_2}  \wedge \partial \varepsilon_{\sigma_3} \wedge \cdots \wedge \partial \varepsilon_{\sigma_n}.
		\end{equation*}
		Thus, the coefficient of $\sum_{\sigma\in \mathcal{S}_n} \mathrm{sgn}(\sigma) 
		\ \varepsilon_1 \ \bar \partial \varepsilon_2  \wedge \partial \varepsilon_3 \wedge \cdots \wedge \partial \varepsilon_n$ in $r_n(n)(x_1 \wedge \cdots x_n)$ is 
		\begin{equation*}
			\begin{aligned}
				2 \sum_{k=0}^{m-1} \left(-c_{0,n} - \cdots - c_{k,n} + c_{k+1, n} +\cdots + c_{m, n}\right) + (-c_{0, n} - \cdots - c_{m,n}).
			\end{aligned}
		\end{equation*}
		which is equal to $-\frac{2^{n-1}}{n!}$. Similarly, one can prove that the coefficient of 
		\begin{equation*}
			\sum_{\sigma \in \mathcal{S}_n} \mathrm{sgn} (\sigma) \varepsilon_{\sigma_1} \ \bar \partial \varepsilon_{\sigma_2} \wedge \bar \partial \varepsilon_{\sigma_3} \wedge \cdots \wedge \bar \partial \varepsilon_{\sigma_j} \wedge \partial\varepsilon_{\sigma_{j+1}} \cdots \wedge \partial \varepsilon_{\sigma_n})
		\end{equation*}
		is $(-1)^{j-1} \frac{2^{n-1}}{n!}$.
	\end{proof}

	\begin{ex}
		For $n=2$, we have
		\begin{equation*}
			\begin{aligned}
				r_2(2)(x_1 \wedge x_2) 
				= \mathrm{Alt}_2 (\log|x_1| d i \arg x_2)
				= - \log|x_1| d i \arg x_2 + \log |x_2| d i \arg |x_1|
				= - \eta(x_1, x_2).
			\end{aligned}
		\end{equation*}
	\end{ex}

	\begin{ex}
		Consider the case $n =3$. We have
		\begin{equation*}
			\begin{aligned}
				&r_3(3) (x_1 \wedge x_2 \wedge x_3) \\
				&= \mathrm{Alt}_3 \left(\dfrac{1}{2} \log |x_1| di \arg x_2 \wedge d i \arg x_3  + \dfrac{1}{6} 
				\log |x_1| d \log |x_2| \wedge d \log|x_3| \right)\\
				&= \mathrm{Alt}_3 \left( \dfrac{1}{2} \log|x_1| (\partial \log|x_2| - \bar \partial \log |x_2|) \wedge (\partial \log|x_3| - \bar \partial \log |x_3|) \right.\\
				&\ + \left. \dfrac{1}{6} \log|x_1| (\partial \log|x_2| + \bar \partial \log|x_2|) \wedge (\partial \log|x_3| + \bar \partial \log |x_3|) \right)\\
				&= \dfrac{2}{3} \mathrm{Alt}_3\left( \log|x_1| \partial \log|x_2| \partial \log |x_3|\right) + \dfrac{2}{3} \mathrm{Alt}_3\left(\log|x_1| \bar \partial \log|x_2| \bar \partial \log |x_3|\right)\\
				&\ -  \dfrac{1}{2} \mathrm{Alt}_3\left( \log|x_1|\bar  \partial \log|x_2| \partial \log |x_3|\right) -  \dfrac{1}{2} \mathrm{Alt}_3\left( \log|x_1|  \partial \log|x_2| \bar \partial \log |x_3|\right) \\
				&\ + \dfrac{1}{6} \mathrm{Alt}_3\left( \log|x_1|\bar  \partial \log|x_2| \partial \log |x_3|\right) +  \dfrac{1}{6} \mathrm{Alt}_3\left( \log|x_1|  \partial \log|x_2| \bar \partial \log |x_3|\right)\\
				&= \dfrac{2}{3} \mathrm{Alt}_3\left( \log|x_1| \partial \log|x_2| \partial \log |x_3|\right) + \dfrac{2}{3} \mathrm{Alt}_3\left(\log|x_1| \bar \partial \log|x_2| \bar \partial \log |x_3|\right) -  \dfrac{2}{3} \mathrm{Alt}_3\left( \log|x_1| \bar \partial \log|x_2| \partial \log |x_3|\right)\\
				&= \dfrac{2}{3}\mathrm{Alt}_3 \left( \log|x_1| \partial \log|x_2| \partial \log |x_3| - \log|x_1| \bar \partial \log|x_2| \partial \log |x_3| +\log|x_1| \bar \partial \log|x_2| \bar \partial \log |x_3| \right)\\
				&= - \eta(x_1, x_2, x_3).
			\end{aligned}
		\end{equation*}
	\end{ex}

	\begin{ex}
		We consider the case $n=4$. We have 
		\begin{equation*}
			\begin{aligned}
				&r_4(3)(x_1 \wedge x_2 \wedge x_3 \wedge x_4) 
				= \mathrm{Alt}_4 \left(\dfrac{1}{6} 
				\log |x_1| d i \arg x_2 \wedge d i \arg x_3 \wedge d i \arg x_4 \right.\\
				& \left. \hspace{5cm} + \dfrac{1}{6} 
				\log |x_1| d \log |x_2| \wedge d \log|x_3| \wedge d i \arg |x_4| \right)\\
				&= - \dfrac{1}{6} \sum_{\sigma \in \Scal_4} \sgn(\sigma) \log |x_{\sigma(1)}| d i \arg x_{\sigma(2)} \wedge d i \arg x_{\sigma(3)} \wedge d i \arg x_{\sigma(4)}\\
				&\quad - \dfrac{1}{6} \sum_{\sigma \in \Scal_4} \sgn(\sigma)  \log |x_{\sigma(1)}| d \log |x_{\sigma(2)}| \wedge d \log|x_{\sigma(3)}| \wedge d d i \arg |x_{\sigma(4)}| \\
				&=- \dfrac{1}{48} \sum_{\sigma \in \Scal_4} \sgn(\sigma) \log |x_{\sigma(1)}| \left(\dfrac{d x_{\sigma(2)}}{x_{\sigma(2)}} - \dfrac{d \bar x_{\sigma(2)}}{\bar x_{\sigma(2)}}\right) \wedge \left(\dfrac{d x_{\sigma(3)}}{x_{\sigma(3)}} - \dfrac{d \bar x_{\sigma(3)}}{\bar x_{\sigma(3)}}\right) \wedge \left(\dfrac{d x_{\sigma(4)}}{x_{\sigma(4)}} - \dfrac{d \bar x_{\sigma(4)}}{\bar x_{\sigma(4)}}\right)\\
				&\quad - \dfrac{1}{48} \sum_{\sigma \in \Scal_4} \sgn(\sigma) \log |x_{\sigma(1)}| \left(\dfrac{d x_{\sigma(2)}}{x_{\sigma(2)}} + \dfrac{d \bar x_{\sigma(2)}}{\bar x_{\sigma(2)}}\right) \wedge \left(\dfrac{d x_{\sigma(3)}}{x_{\sigma(3)}} + \dfrac{d \bar x_{\sigma(3)}}{\bar x_{\sigma(3)}}\right) \wedge \left(\dfrac{d x_{\sigma(4)}}{x_{\sigma(4)}} -  \dfrac{d \bar x_{\sigma(4)}}{\bar x_{\sigma(4)}}\right)\\
				&=- \dfrac{1}{24} \sum_{\sigma\in \Scal_4} \sgn(\sigma) \log |x_{\sigma(1)}| \dfrac{d x_{\sigma(2)}}{x_{\sigma(2)}} \wedge \dfrac{d x_{\sigma(3)}}{x_{\sigma(3)}} \wedge \dfrac{d x_{\sigma(4)}}{x_{\sigma(4)}} + \dfrac{1}{24} \sum_{\sigma\in \Scal_4} \sgn(\sigma) \log |x_{\sigma(1)}| \dfrac{d \bar x_{\sigma(2)}}{x_{\sigma(2)}} \wedge \dfrac{d x_{\sigma(3)}}{x_{\sigma(3)}} \wedge \dfrac{d x_{\sigma(4)}}{x_{\sigma(4)}}\\
				&\quad - \dfrac{1}{24} \sum_{\sigma\in \Scal_4} \sgn(\sigma) \log |x_{\sigma(1)}| \dfrac{d \bar x_{\sigma(2)}}{\bar x_{\sigma(2)}} \wedge \dfrac{d \bar x_{\sigma(3)}}{\bar x_{\sigma(3)}} \wedge \dfrac{d x_{\sigma(4)}}{x_{\sigma(4)}} + \dfrac{1}{24} \sum_{\sigma\in \Scal_4} \sgn(\sigma) \log |x_{\sigma(1)}| \dfrac{d \bar x_{\sigma(2)}}{\bar x_{\sigma(2)}} \wedge \dfrac{d \bar x_{\sigma(3)}}{\bar x_{\sigma(3)}} \wedge \dfrac{d \bar x_{\sigma(4)}}{\bar x_{\sigma(4)}}\\
				&= - \dfrac{1}{3} \sum_{\sigma\in \Scal_4} \sgn(\sigma) \log |x_{\sigma(1)}| \partial \log |x_{\sigma(2)}| \wedge \partial \log |x_{\sigma(3)}| \wedge \partial \log |x_{\sigma(4)}|\\
				&\quad + \dfrac{1}{3} \sum_{\sigma\in \Scal_4} \sgn(\sigma) \log |x_{\sigma(1)}| \bar \partial \log |x_{\sigma(2)}| \wedge \partial \log |x_{\sigma(3)}| \wedge \partial \log |x_{\sigma(4)}|\\
				&\quad - \dfrac{1}{3} \sum_{\sigma\in \Scal_4} \sgn(\sigma) \log |x_{\sigma(1)}| \bar \partial \log |x_{\sigma(2)}| \wedge \bar \partial \log |x_{\sigma(3)}| \wedge \partial \log |x_{\sigma(4)}|\\
				&\quad + \dfrac{1}{3} \sum_{\sigma\in \Scal_4} \sgn(\sigma) \log |x_{\sigma(1)}| \bar \partial \log |x_{\sigma(2)}| \wedge \bar \partial \log |x_{\sigma(3)}| \wedge \bar \partial \log |x_{\sigma(4)}|\\
				&= - \eta(x_1, x_2, x_3, x_4).
			\end{aligned}
		\end{equation*}
	\end{ex}
	
	Now we assume the following decomposition on $\bigwedge^n \Qbb(V_P)^\times_\Qbb$, which generalizes a condition considered by Lalín in the three-variable case (see  \cite{Lal15}),
	
	\begin{equation}\label{lalincond}
		x_1 \wedge \cdots \wedge x_n = \sum_j c_j\   f_j \wedge (1-f_j)\wedge \bigwedge_{k=1}^{n-2} g_{j,k},
	\end{equation}
	for some $c_j \in \Qbb^\times$ and some functions $f_j \in \Qbb(V_P)^\times \setminus \{1\}$ and $g_{j, k} \in \Qbb(V_P)^\times$ for $k=1, \dots, n-2$.  Then by Lemma \ref{DenGon}, we have 
	\begin{equation*}\label{primitive}
		\begin{aligned}
			\eta(x_1, \dots, x_n)|_{V_P^\reg} &= - r_n(n)(x_1 \wedge \dots \wedge x_n)|_{V_P^\reg}\\
			&= - \sum_j c_j \ r_n(n) (f_j \wedge (1-f_j) \wedge \bigwedge_{k=1}^{n-2} g_{j,k})\\
			&= \sum_j c_j \ r_n(n) ((1-f_j) \wedge f_j \wedge \bigwedge_{k=1}^{n-2} g_{j,k})\\
			&= d \left(\sum_j c_j r_n(n-1) (\{f_j\}_2 \otimes \bigwedge_{k=1}^{n-2} g_{j,k}) \right), 
		\end{aligned}
	\end{equation*}
	where the last equality follows from the commutative diagram \eqref{10.5.1} of Goncharov. Hence $\eta(x_1, \dots, x_n)$ is an exact form on $V_P^\reg$, so that $P$ is an exact polynomial. We consider the following involution on $(\Cbb^\times)^n$
	\begin{equation}\label{tau}
		\tau : (\Cbb^\times)^n \to (\Cbb^\times)^n, \quad (x_1, \dots, x_n) \mapsto (1/x_1, \dots, 1/x_n),
	\end{equation}
	which is defined over $\Qbb$.  Its restriction on $W_P$ is an isomorphism. 
	
	\begin{definition}\label{310} Denote by  $F = \Qbb(W_P)$.  We define the following elements in $B_2(F)  \otimes \bigwedge^{n-2} F^\times_\Qbb$.
		\begin{equation}\label{lambda}
			\xi := \sum_j c_j \{f_j\}_2 \otimes \bigwedge_{k=1}^{n-2} g_{j,k}, \qquad \xi^* := \sum_j c_j \{f_j \circ \tau\}_2 \otimes \bigwedge_{k=1}^{n-2} (g_{j,k} \circ \tau), \qquad \lambda := \dfrac{1}{2}\left(\xi + (-1)^{n-1} \xi^*\right),
		\end{equation}
		where $f_j, g_{j, k}$ in the decomposition \eqref{lalincond}. Let us consider the following closed subschemes of $V_P$ and $V_{P^*}$, respectively \begin{equation}\label{233}
			Z_1 = \{\text{zeros and poles of  } f_j, 1-f_j, g_{j, k}   \text{ as functions on } V_{P} \text{ for all } j, k\},
		\end{equation} \begin{equation}
			Z_2 = \{\text{zeros and poles of  } f_j \circ \tau, 1-f_j \circ \tau, g_{j, k} \circ \tau \text{ as functions on } V_{P^*} \text{ for all } j, k\}.
		\end{equation}
		We define the following differential $(n-2)$-forms on $V_P^\reg \setminus Z_1$ and $V_{P^*}^\reg \setminus Z_2$, respectively.
		\begin{equation}\label{Z}
			\rho(\xi) := \sum_j c_j \rho(f_j, g_{j, 1}, \dots, g_{j,n-2}), \quad \rho(\xi^*) := \sum_j c_j \rho(f_j \circ \tau, g_{j, 1} \circ \tau, \dots, g_{j, n-2} \circ \tau),
		\end{equation}
		where $\rho(f, g_1, \dots, g_{n-2})$ is the differential $(n-2)$-form representing $r_n(n-1)(\{f\}_2 \otimes \bigwedge_{k=1}^{n-2} g_{j, k})$ as mentioned in $\eqref{gonform}$. We consider the following closed subscheme of $W_P$
        \begin{equation}
            Z = \{\text{zeros and poles of  } f_j, 1-f_j, g_{j, k}, f_j \circ \tau, 1-f_j \circ \tau, g_{j, k} \circ \tau   \text{ as functions on } W_P \text{ for all } j, k\}.
        \end{equation}
	We set \begin{equation}\label{Y}
			Y := W_P^\reg \setminus Z,
		\end{equation}
		and consider the following differential $(n-2)$-form defined on $Y(\Cbb)$
		\begin{equation}
			\rho(\lambda) := \dfrac{1}{2}\left(\rho(\xi)|_{Y(\Cbb)} + (-1)^{n-1}\rho(\xi^*)|_{Y(\Cbb)} \right).
		\end{equation}
	\end{definition}
	
	We have the following result.
	
	\begin{lm}\label{Hn-1Fn}
		The element $\lambda$ \eqref{lambda} defines a class in $H^{n-1}(\Gamma(F, n))$, where $\Gamma(F, n)$ is the weight $n$ polylogarithmic complex of Goncharov \eqref{weightnGoncomplex}. 
	\end{lm}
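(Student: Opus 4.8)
The plan is to prove that $\lambda$ is a cocycle in the penultimate spot of $\Gamma_F(n)$. Since $\lambda \in \Bscr_2(F)\otimes\bigwedge^{n-2}F^\times_\Qbb$ sits in cohomological degree $n-1$, and any element of this group lying in $\ker\alpha_n(n)$ automatically represents a class in $H^{n-1}(\Gamma_F(n))$, it suffices to check that $\alpha_n(n)(\lambda)=0$ in $\bigwedge^n F^\times_\Qbb$; I need say nothing about the image of the incoming differential.

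First I would compute $\alpha_n(n)(\xi)$. Applying the differential $\{x\}_2\otimes y_1\wedge\cdots\wedge y_{n-2}\mapsto (1-x)\wedge x\wedge y_1\wedge\cdots\wedge y_{n-2}$ termwise and using the antisymmetry $(1-f_j)\wedge f_j = -\,f_j\wedge(1-f_j)$, one gets
\[
\alpha_n(n)(\xi)=\sum_j c_j\,(1-f_j)\wedge f_j\wedge\bigwedge_{k=1}^{n-2}g_{j,k}=-\sum_j c_j\,f_j\wedge(1-f_j)\wedge\bigwedge_{k=1}^{n-2}g_{j,k}.
\]
By the decomposition \eqref{lalincond}, restricted to $W_P$ (so that the $f_j,g_{j,k}$ are regarded as elements of $F^\times=\Qbb(W_P)^\times$), the right-hand side equals $-\,x_1\wedge\cdots\wedge x_n$ in $\bigwedge^n F^\times_\Qbb$.

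Next I would handle $\xi^*$. Since $\xi^*$ is obtained from $\xi$ by replacing every function $h$ by $h\circ\tau$, and since $\tau$ restricts to an automorphism of $W_P$ defined over $\Qbb$ (hence induces a field automorphism of $F$ and a morphism of the Goncharov complex), naturality of the differential gives $\alpha_n(n)(\xi^*)=\tau^*\bigl(\alpha_n(n)(\xi)\bigr)=-\,\tau^*(x_1\wedge\cdots\wedge x_n)$. Now $x_i\circ\tau=x_i^{-1}$, and in the $\Qbb$-vector space $F^\times_\Qbb$ inversion becomes negation, so $\tau^*(x_i)=x_i^{-1}=-\,x_i$; hence
\[
\tau^*(x_1\wedge\cdots\wedge x_n)=(-x_1)\wedge\cdots\wedge(-x_n)=(-1)^n\,x_1\wedge\cdots\wedge x_n,
\]
and therefore $\alpha_n(n)(\xi^*)=(-1)^{n-1}\,x_1\wedge\cdots\wedge x_n$.

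Finally I combine the two computations. Since $\lambda=\tfrac12\bigl(\xi+(-1)^{n-1}\xi^*\bigr)$ and $(-1)^{n-1}(-1)^{n-1}=1$,
\[
\alpha_n(n)(\lambda)=\tfrac12\Bigl(-x_1\wedge\cdots\wedge x_n+(-1)^{n-1}(-1)^{n-1}x_1\wedge\cdots\wedge x_n\Bigr)=0,
\]
so $\lambda$ is a cocycle and defines a class in $H^{n-1}(\Gamma_F(n))$. The only genuinely delicate points are the sign bookkeeping — both the antisymmetry producing the minus sign in $\alpha_n(n)(\xi)$ and the factor $(-1)^n$ coming from inversion under $\tau^*$ must match the $(-1)^{n-1}$ weighting in the definition of $\lambda$ — and the implicit restriction of \eqref{lalincond} from $\Qbb(V_P)$ to $F=\Qbb(W_P)$, which presupposes that none of the $f_j,1-f_j,g_{j,k}$ vanish identically on $W_P$. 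I expect this last point to be the main thing requiring care, and it is precisely what the excision $Y := W_P^\reg\setminus(Z\cup\tau(Z))$ is arranged to guarantee.
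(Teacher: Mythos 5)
Your proof is correct and follows essentially the same route as the paper's: compute $\alpha_n(n)(\xi)=-x_1\wedge\cdots\wedge x_n$ from the decomposition \eqref{lalincond} and antisymmetry, deduce $\alpha_n(n)(\xi^*)=(-1)^{n-1}\,x_1\wedge\cdots\wedge x_n$ by naturality under $\tau^*$ together with the sign $(-1)^n$ coming from inversion in $F^\times_\Qbb$, and combine using the definition of $\lambda$. The only differences are cosmetic: you spell out the final cancellation $\alpha_n(n)(\lambda)=0$ and flag the implicit restriction of \eqref{lalincond} from $\Qbb(V_P)$ to $F=\Qbb(W_P)$, both of which the paper leaves tacit.
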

	\begin{proof}
		We recall the weight $n$ polylogarithmic complex of Goncharov 
		\begin{equation}
			\Bscr_n(F) \to \Bscr_{n-1}(F) \otimes F^\times_\Qbb \to \Bscr_{n-2}(F) \otimes \bigwedge^2 F^\times_\Qbb \to \cdots \to \Bscr_2(F) \otimes \bigwedge^{n-2} F^\times_\Qbb \to \bigwedge^n F^\times_\Qbb,
		\end{equation}
		where the differentials map at degree $n-1$ is defined by
		\begin{equation*}
			\alpha_n(n-1) : \{x\}_2 \otimes y_1 \wedge \dots \wedge y_{n-2} \mapsto (1-x)\wedge x \wedge y_1 \wedge \dots \wedge y_{n-2}.
		\end{equation*}
		We have $\alpha_n(n-1) (\xi) = \sum_j c_j (1-f_j)\wedge f_j \wedge \bigwedge^{n-2}_{k=1} g_{j, k} = - x_1 \wedge \dots \wedge x_n$
		and
		\begin{align*}
			\alpha_n(n-1) (\xi^*) &= \sum_j c_j (1-f_j \circ \tau) \wedge (f_j \circ \tau) \wedge \bigwedge^{n-2}_{k=1} (g_{j, k} \circ \tau)\\
			&= \sum_j c_j \tau^*((1-f_j)\wedge f_j \wedge \bigwedge^{n-2}_{k=1} g_{j, k})\\
			&= \tau^* (-x_1 \wedge \dots \wedge x_n)\\
			&= (-1)^{n-1} x_1 \wedge \dots \wedge x_n.
		\end{align*}
	\end{proof}
	
	\begin{lm}\label{7.2.7}
		The differential form $\rho(\lambda)$ is closed and invariant under the action of the involution $F_{\dR}$. Then it defines a  class in Deligne cohomology $H^{n-1}_\Dcal(Y_\Rbb, \Rbb(n)) \simeq H^{n-2}(Y(\Cbb), \Rbb(n-1))^+,$ where $Y(\Cbb)$ is the set of complex points of $Y \times_\Qbb \Cbb$.
	\end{lm}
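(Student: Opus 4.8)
The plan is to establish the two analytic statements—that $\rho(\lambda)$ is $d$-closed and $F_{\dR}$-invariant—independently, and then to extract the Deligne--Beilinson class directly from Burgos's description \eqref{54}--\eqref{52}, identifying it via \eqref{2.1.8}.

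\emph{Closedness.} The key observation is that $\rho(\lambda)$ is exactly the image of $\lambda$ under Goncharov's regulator. Indeed, $\lambda = \tfrac12(\xi + (-1)^{n-1}\xi^*) \in \Bscr_2(F)\otimes\bigwedge^{n-2}F^\times_\Qbb$ with $F = \Qbb(W_P) = \Qbb(Y)$, and by linearity of $r_n(n-1)$ the form $\rho(\lambda)$ of \eqref{lambda} represents $r_n(n-1)(\lambda)$ on $Y(\Cbb)$. Applying the commutative diagram \eqref{10.5.1} to the smooth $\Qbb$-variety $Y$ gives
\[ d\,\rho(\lambda) = r_n(n)\bigl(\alpha_n(n-1)(\lambda)\bigr), \]
and Lemma \ref{Hn-1Fn} shows $\alpha_n(n-1)(\lambda) = 0$ in $\bigwedge^n F^\times_\Qbb$, so $d\rho(\lambda) = 0$ on $Y(\Cbb)$. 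I would emphasize that the symmetrization is indispensable here: by Lemma \ref{DenGon} the individual pieces satisfy $d\rho(\xi) = r_n(n)(-x_1\wedge\cdots\wedge x_n) = \eta|_{W_P}$ and $d\rho(\xi^*) = r_n(n)((-1)^{n-1}x_1\wedge\cdots\wedge x_n) = (-1)^n\eta|_{W_P}$, both nonzero, and it is precisely the combination $\tfrac12(\eta + (-1)^{n-1}(-1)^n\eta) = 0$ defining $\lambda$ that makes $\rho(\lambda)$ closed.

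\emph{Invariance.} Since $P$ has rational coefficients, the involution $\tau$ and the functions $f_j, 1-f_j, g_{j,k}$ together with their $\tau$-twists are all defined over $\Qbb\subset\Rbb$, so $F_\infty^* g = \bar g$ for each of them. I would then check $F_{\dR} = F_\infty^*(\overline{\,\cdot\,})$ on the building blocks of \eqref{gonform}: from $F_\infty^*\log|g| = \log|g|$, $F_\infty^*\arg g = -\arg g$, and the identity $D(\bar z) = -D(z)$ for the Bloch--Wigner function \eqref{dilog}, one obtains $F_{\dR}(\log|g|) = \log|g|$, $F_{\dR}(d\,i\arg g) = d\,i\arg g$, $F_{\dR}(iD(f)) = iD(f)$ and $F_{\dR}(\theta(1-f,f)) = \theta(1-f,f)$. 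As $F_{\dR}$ is multiplicative on forms, $\rho(\xi)$ and $\rho(\xi^*)$ are each $F_{\dR}$-invariant, hence so is $\rho(\lambda)$.

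\emph{Conclusion.} On $Y = W_P^\reg\setminus(Z\cup\tau(Z))$ the functions $f_j, 1-f_j, g_{j,k}$ are units, so $\rho(\lambda)$ is a smooth $\Rbb(n-1)$-valued $(n-2)$-form on $Y(\Cbb)$ with at worst logarithmic singularities along a normal crossing compactification $\bar Y\supset Y$; thus it is an element of Burgos's complex $E_n(Y)^{n-1}$. Being of degree $n-2$ it has Hodge types $p,q<n$, so the Burgos differential at that spot is $-\pr_n\circ d$ and closedness makes $\rho(\lambda)$ a cocycle; combined with $F_{\dR}$-invariance this yields a class in $H^{n-1}_\Dcal(Y(\Cbb),\Rbb(n))^+ = H^{n-1}_\Dcal(Y_\Rbb,\Rbb(n))$, which \eqref{2.1.8} (the case $j=n > n-1$, applied on $F_{\dR}$-invariants) identifies with $H^{n-2}(Y(\Cbb),\Rbb(n-1))^+$. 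The step I expect to be most delicate is the verification of logarithmic growth along $\bar Y$, needed so that $\rho(\lambda)$ genuinely represents a Deligne--Beilinson class of the \emph{open} variety $Y$ rather than merely a de Rham class on $Y(\Cbb)$; once Lemma \ref{Hn-1Fn} is available, the closedness and invariance reduce to the bookkeeping above.
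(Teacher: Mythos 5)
Your proposal is correct and follows essentially the same route as the paper: closedness via the cocycle property of $\lambda$ together with the commutativity of Goncharov's diagram \eqref{10.5.1} (your remark reproduces exactly the paper's cancellation $d\rho(\xi)=\eta|_{V_P^\reg}$, $d\rho(\xi^*)=(-1)^n\eta|_{V_P^\reg}$), and $F_{\dR}$-invariance by checking the building blocks using rationality of the $f_j, g_{j,k}$ and the symmetries $D(\bar z)=-D(z)$, $\arg \bar z=-\arg z$, $|\bar z|=|z|$. The only divergence is your final detour through Burgos's complex $E_n(Y)^{n-1}$, which rests on a logarithmic-growth claim you flag but never verify (genuinely delicate for the $D(f_j)$ terms); this detour is unnecessary, since the identification \eqref{2.1.8} that you invoke anyway shows that in this weight/degree range the Deligne--Beilinson cohomology is just $H^{n-2}(Y(\Cbb),\Rbb(n-1))^+$, so the closed, $F_{\dR}$-invariant, $\Rbb(n-1)$-valued smooth form $\rho(\lambda)$ already defines the class — which is how the paper implicitly concludes.
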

	
	\begin{proof}
		The proof for the first statement  is similar to the proof of Lemma \ref{Hn-1Fn}. Indeed, we have $$d \rho (\xi) = \eta(x_1, \cdots, x_n)|_{V_P^\reg},$$
		and
		$$d \rho(\xi^*) = d (\tau^* \rho(\xi)) = \tau^* d \rho(\xi) = \tau^* \eta(x_1, \dots, x_n) = (-1)^n \eta(x_1, \dots, x_n)|_{V_P^\reg}.$$
		Therefore, $d\rho(\lambda) = d \rho(\xi) + (-1)^{n-1} d \rho(\xi^*) = 0$. Recall that the differential form $\rho(f, g_1, \dots, g_{n-2})$ is 
		\begin{align*}
			&i D(f) \sum_{p=0}^{[\frac{n-2}{2}]} \dfrac{1}{(2p+1)!(n-2p-2)!} \mathrm{Alt}_{n-2} \left( \bigwedge_{k=1}^{2p} d \log |g_k| \wedge \bigwedge_{\ell=2p+1}^{n-2} d i \arg g_\ell \right)\\
			&+ \sum_{m=1}^{[\frac{n-1}{2}]} \dfrac{1}{(2m+1)(2m-1)!(n-2m-1)!}\theta(1-f, f) \mathrm{Alt}_{n-2} \left(\log |g_1| \bigwedge_{k=2}^{2m-1}d \log|g_k| \wedge\bigwedge_{\ell = 2m}^{n-2} di \arg g_\ell\right),
		\end{align*}
		where $\theta(f, g) :=  \log|f| d \log |g| - \log |g| d \log |f|$. Now we show that $\rho(f, g_1, \dots, g_{n-2})$ is invariant under the action of the involution $F_{\dR}$. Recall that for a differential form 
		$\omega$, $F_{\dR} (\omega) := c^*(\bar \omega)$, where $c$ is complex conjugation. We have that  $\bar \rho(f, g_1, \dots, g_{n-2})$ is given by
		\begin{align*}
			&-D(f) \sum_{p=0}^{[\frac{n-2}{2}]} \dfrac{1}{(2p+1)!(n-2p-2)!} \mathrm{Alt}_{n-2} \left( \bigwedge_{k=1}^{2p} d \log |g_k| \wedge \bigwedge_{\ell=2p+1}^{n-2} d \arg g_\ell \right)\\
			&-i \sum_{m=1}^{[\frac{n-1}{2}]} \dfrac{1}{(2m+1)(2m-1)!(n-2m-1)!}\theta(1-f, f) \mathrm{Alt}_{n-2} \left(\log |g_1| \bigwedge_{k=2}^{2m-1}d \log|g_k| \wedge\bigwedge_{\ell = 2m}^{n-2} d \arg g_\ell\right).
		\end{align*}
		Let $z \in Y(\Cbb)$. The image of $\bar \rho(f, g_1, \dots, g_{n-2})(z)$ under the action of complex conjugation $c$ is 
		\begin{align*}
			&-D(f(\bar z)) \sum_{p=0}^{[\frac{n-2}{2}]} \dfrac{1}{(2p+1)!(n-2p-2)!} \mathrm{Alt}_{n-2} \left( \bigwedge_{k=1}^{2p} d \log |g_k (\bar z)| \wedge \bigwedge_{\ell=2p+1}^{n-2} d \arg g_\ell (\bar z) \right)\\
			&-i \sum_{m=1}^{[\frac{n-1}{2}]} \dfrac{1}{(2m+1)(2m-1)!(n-2m-1)!}\theta(1-f(\bar z), f(\bar z))\\
			&\hspace{7cm} \mathrm{Alt}_{n-2} \left(\log |g_1(\bar z)| \bigwedge_{k=2}^{2m-1}d \log|g_k(\bar z)| \wedge\bigwedge_{\ell = 2m}^{n-2} d \arg g_\ell (\bar z)\right).
		\end{align*}
		As $f_j, g_j$ have rational coefficients, this equals
		\begin{equation*}
			\begin{aligned}
				&-D(\overline{f(z)}) \sum_{p=0}^{[\frac{n-2}{2}]} \dfrac{1}{(2p+1)!(n-2p-2)!} \mathrm{Alt}_{n-2} \left( \bigwedge_{k=1}^{2p} d \log |\overline{g_k (z)}| \wedge \bigwedge_{\ell=2p+1}^{n-2} d \arg \overline{g_\ell (z)} \right)\\
				&-i \sum_{m=1}^{[\frac{n-1}{2}]} \dfrac{1}{(2m+1)(2m-1)!(n-2m-1)!}\theta(1-\overline{f(z)}, \overline{f(z)})\\
				&\hspace{7cm} \mathrm{Alt}_{n-2} \left(\log |\overline{g_1(z)}| \bigwedge_{k=2}^{2m-1}d \log| \overline{g_k(z)}| \wedge\bigwedge_{\ell = 2m}^{n-2} d \arg \overline{g_\ell (z)}\right),
			\end{aligned}  
		\end{equation*}
		which is the same as $\rho(f, g_1,\dots, g_{n-2})$ since $D(\bar z) = - D(z)$, $|\bar z| = |z|$, $\arg \bar z  = - \arg z$, $|1-\bar z| = |1-z|$ for $z \in \Cbb$.  
	\end{proof}

	\begin{thm}\label{7.2.8} Assume that $\Gamma\subset V_P^\reg$ and that $\partial \Gamma \subset Y(\Cbb)$. Then $\partial \Gamma$ defines an element in the singular homology group $H_{n-2}(Y(\Cbb), \Zbb)^{(-1)^{n-1}}$, where $``+/-"$ denotes the invariant/anti-invariant part of $H_{n-2}(Y(\Cbb), \Zbb)$ under the action induced by the complex conjugation. Under condition \eqref{lalincond}, we have 
		\begin{equation}
			\m(P) - \m(\tilde P) = \dfrac{(-1)^{n-1}}{(2\pi i)^{n-1}} \int_{\partial \Gamma} \rho(\xi).
		\end{equation}
		Moreover, the Mahler measure can be written as a pairing in Deligne cohomology of $Y_\Rbb$
		\begin{equation}\label{pair}
			\m (P) - \m (\tilde P) =  \dfrac{(-1)^{n-1}}{(2\pi i)^{n-1}} \left<[\partial \Gamma], [\rho(\lambda)]\right>_Y,
		\end{equation}
		where the pairing is given by
		\begin{equation}
			\left<\cdot, \cdot\right>_Y : H_{n-2}(Y(\Cbb), \Zbb)^{(-1)^{n-1}} \times H^{n-2}(Y(\Cbb), \Rbb(n-1))^+ \to \Rbb(n-1).
		\end{equation}
	\end{thm}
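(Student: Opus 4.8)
The plan is to prove the three assertions in the order stated: that $\partial\Gamma$ is a cycle lying in the correct eigenspace, then the primitive formula via Stokes, and finally the Deligne-cohomology pairing, the last two being linked by one and the same orientation computation.

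First I would establish the \emph{orientation sign}, which is the crux of the whole statement. Since $P$ has real coefficients, complex conjugation $F_\infty\colon x\mapsto\bar x$ is an (antiholomorphic) self-map of $V_P(\Cbb)$, and because it preserves each $|x_j|$ it preserves both $\Gamma$ and $\partial\Gamma$ setwise. Parametrizing $V_P^\reg$ locally by $w=(x_1,\dots,x_{n-1})$ with $x_n=\phi(w)$, the chain $\Gamma$ becomes the region $u:=\log|\phi|\ge 0$ in the torus $\{|x_1|=\dots=|x_{n-1}|=1\}$, oriented by $\Omega:=d\theta_1\wedge\cdots\wedge d\theta_{n-1}$, and $F_\infty$ acts by $\theta_j\mapsto-\theta_j$, so $F_\infty^*\Omega=(-1)^{n-1}\Omega$. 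The decisive point is that $F_\infty$ \emph{fixes} $u$, because $|\phi(\bar w)|=|\overline{\phi(w)}|=|\phi(w)|$ (here $F_\infty$ preserves $|x_n|$, unlike $\tau$, which is why one argues with $F_\infty$); hence $F_\infty^*du=du$. Writing the induced boundary-orientation form through $du\wedge\omega_{\partial\Gamma}=-\Omega$ and applying $F_\infty^*$ gives $du\wedge F_\infty^*\omega_{\partial\Gamma}=(-1)^{n-1}\,du\wedge\omega_{\partial\Gamma}$, so dividing by $du$ and restricting to $\{u=0\}$ yields $F_\infty^*\omega_{\partial\Gamma}=(-1)^{n-1}\omega_{\partial\Gamma}$ on $\partial\Gamma$. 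Thus $(F_\infty)_*[\partial\Gamma]=(-1)^{n-1}[\partial\Gamma]$. As $\partial\Gamma$ is a closed oriented $(n-2)$-submanifold contained in $Y(\Cbb)$, it defines a class in $H_{n-2}(Y(\Cbb),\Zbb)$, and this identity places it in the $(-1)^{n-1}$-eigenspace, proving the first assertion.

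Next, for the primitive formula I would invoke Deninger's identity \eqref{Deningerformulae}, valid since $\Gamma\subset V_P^\reg$, giving $\m(P)-\m(\tilde P)=\frac{(-1)^{n-1}}{(2\pi i)^{n-1}}\int_\Gamma\eta(x_1,\dots,x_n)$. By Lemma \ref{DenGon} together with condition \eqref{lalincond} and the commutativity of Goncharov's diagram \eqref{10.5.1}, the form $\eta|_{V_P^\reg}$ equals $d\rho(\xi)$ on $V_P^\reg\setminus Z$, so Stokes' theorem gives $\int_\Gamma\eta=\int_{\partial\Gamma}\rho(\xi)$. The one technical point is that $\rho(\xi)$ has singularities along $Z$, which $\Gamma$ may meet in its interior; I would excise $\varepsilon$-tubes around $Z\cap\Gamma$, apply Stokes on the complement, and let $\varepsilon\to0$, the tube contributions vanishing because $\rho(\xi)$ has only logarithmic and (bounded) dilogarithmic singularities and is locally integrable. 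Note that $\partial\Gamma\cap Z=\varnothing$ by the hypothesis $\partial\Gamma\subset Y(\Cbb)$, so no regularization is needed on the boundary. This establishes the first displayed equation.

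Finally, to rewrite the answer with $\rho(\lambda)$, I would use that $\rho(\xi^*)=\tau^*\rho(\xi)$, which is immediate from Definition \ref{310} and the fact that $\tau$ is defined over $\Rbb$, hence commutes with $D$, $|\cdot|$, $\arg$ and $d$. Since $F_\infty$ and $\tau$ agree pointwise on $\partial\Gamma\subset\Tbb^n$, they induce the same map on $T\partial\Gamma$, so the computation above also gives $\tau_*[\partial\Gamma]=(-1)^{n-1}[\partial\Gamma]$, whence $\int_{\partial\Gamma}\rho(\xi^*)=\int_{\partial\Gamma}\tau^*\rho(\xi)=(-1)^{n-1}\int_{\partial\Gamma}\rho(\xi)$. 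Therefore
\begin{equation*}
\int_{\partial\Gamma}\rho(\lambda)=\tfrac12\int_{\partial\Gamma}\rho(\xi)+\tfrac{(-1)^{n-1}}{2}\int_{\partial\Gamma}\rho(\xi^*)=\tfrac12\int_{\partial\Gamma}\rho(\xi)+\tfrac12\int_{\partial\Gamma}\rho(\xi)=\int_{\partial\Gamma}\rho(\xi),
\end{equation*}
and combining with the previous paragraph yields \eqref{pair}; the pairing is well defined because $[\rho(\lambda)]\in H^{n-2}(Y(\Cbb),\Rbb(n-1))^+$ by Lemma \ref{7.2.7} while $[\partial\Gamma]$ lies in the $(-1)^{n-1}$-part, exactly the eigenspaces matched in \eqref{pair1}. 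I expect the orientation sign $(-1)^{n-1}$ — reconciling the boundary orientation of $\partial\Gamma$ with the complex conjugation action, and executed through $F_\infty$ rather than $\tau$ so that $u=\log|x_n|$ is preserved — to be the main obstacle; the remainder is Deninger's formula, the already-established primitive $d\rho(\xi)=\eta|_{V_P^\reg}$, and bookkeeping.
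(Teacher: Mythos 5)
Your proof follows essentially the same route as the paper's: Deninger's formula combined with Stokes' theorem via $d\rho(\xi)=\eta(x_1,\dots,x_n)|_{V_P^\reg}$ (Lemma \ref{DenGon} and the commutativity of \eqref{10.5.1}), the identification of complex conjugation with $\tau$ on $\partial\Gamma$ together with the $(-1)^{n-1}$ sign coming from inversion of each unit circle, and the symmetrization step $\int_{\partial\Gamma}\rho(\lambda)=\int_{\partial\Gamma}\rho(\xi)$ deduced from $\tau_*[\partial\Gamma]=(-1)^{n-1}[\partial\Gamma]$ and $\rho(\xi^*)=\tau^*\rho(\xi)$. If anything, you are more explicit than the paper at its two tersest points: the compatibility of the conjugation sign with the \emph{boundary} orientation (your argument via the conjugation-invariant defining function $u=\log|x_n|$), and the possible singularities of $\rho(\xi)$ on the interior of $\Gamma$ along $Z$, which the paper's application of Stokes' theorem passes over without comment.
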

	
	\begin{proof}
		Since $\partial \Gamma$ is contained in $Y(\Cbb)$,  we have the following sequence
		\begin{center}
			\begin{tikzcd}[cramped,row sep=small]
				H_{n-1}(V_P^\reg, \partial \Gamma, \Zbb) \arrow[r] & H_{n-2}(\partial \Gamma, \Zbb) \arrow[r] &  H_{n-2}(Y(\Cbb), \Zbb) \\
				{[\Gamma]} \arrow[r, mapsto, shorten=5mm] & {[\partial \Gamma]} \arrow[r, mapsto, shorten=5mm] & {[\partial \Gamma]}.
			\end{tikzcd}
		\end{center}
		Therefore, $\partial \Gamma$ defines an element in $H_{n-2}(Y(\Cbb), \Zbb)$.  Now we show that $\partial \Gamma$ is fixed under the action of complex conjugation. Notice that the action of complex conjugation on $\partial\Gamma$ is nothing but the action of $\tau$ \eqref{tau}. We have $\tau(\partial \Gamma) = \partial \Gamma$ as a set. We claim that $\tau$ preserves (reverses) the orientation of $\partial \Gamma$ if $n$ is odd (even), respectively. In fact, the orientation of the boundary $\partial \Gamma$ is induced by the orientation of $\Gamma$ and the orientation of $\Gamma$ comes from the orientation of the torus $\Tbb^{n-1} = (\mathbb{S}^1)^{n-1}$.  The claim follows from the fact that $\tau$ reverses the orientation of the unit circle $\mathbb{S}^1$. Therefore, $\partial \Gamma$ is invariant (anti-invariant) under the complex conjugation when $n$ is odd (even), respectively. 
		
		By condition \eqref{lalincond}, we have $\eta(x_1, \dots, x_n)|_{V_P^\reg} = d \rho(\xi)$. Applying Stokes' theorem to Deninger's formula \eqref{Deningerformulae}, we get 
		\begin{equation}\label{1form}
			\m(P) - \m(\tilde P) = \dfrac{(-1)^{n-1}}{(2\pi i)^{n-1}} \int_{\partial \Gamma} \rho(\xi)|_{Y(\Cbb)}.
		\end{equation}
		We have
		\begin{equation*}
			\int_{\partial \Gamma} \rho(\xi)|_{Y(\Cbb)} = \int_{\tau_*(\partial \Gamma)} \tau^*(\rho(\xi))|_{Y(\Cbb)} = (-1)^{n-1}
			\int_{\partial \Gamma} \rho(\xi^*)|_{Y(\Cbb)},
		\end{equation*}
		where the last equality is because $\tau$ fixes $\partial \Gamma$ as a set and $\tau$ preserves (inverses) the orientation of $\partial \Gamma$ if $n$ is odd (even), respectively. Hence 
		\begin{equation}
			\m(P)  - \m(\tilde P) = \dfrac{(-1)^{n-1}}{(2\pi i)^{n-1}} \int_{\partial \Gamma} \rho(\xi)|_{Y(\Cbb)} = \dfrac{(-1)^{n-1}}{(2\pi i)^{n-1}} \int_{\partial \Gamma} \rho(\lambda). 
		\end{equation}
		
	\end{proof}

	\section{The Mahler measure of four-variable exact polynomials\\and special $L$-values of $K3$ surfaces}\label{4varsgeneral}
	
	In this section, we study the relationship between the Mahler measure of four-variable exact polynomials and special $L$-values of $K3$ surfaces. In Section \ref{section 1.3}, using Goncharov's polylogarithmic complex, we construct an element in the motivic cohomology of a smooth compactification of the Maillot variety and relate it to the Mahler measure under some conjectures.  In Section \ref{section K3}, we study the motivic cohomology and Deligne-Beilinson cohomology of singular $K3$ surfaces, and then restate the Beilinson conjectures.
	
	\subsection{Mahler measures of exact polynomials in four variables}\label{section 1.3} While Section \ref{main1} dealt with polynomials in arbitrarily many variables $n$, we now turn our attention to the specific case $n = 4$. Let $X$ be a regular surface over a field $k$ of characteristic 0, and $F$ be the function field of $X$. Let $X^i$ be the set of points of codimension $i$ of $X$. Denote by $k(x)$ the residue field and $\pi_x$ a uniformizer at $x$. Let $G_x := \Gal(\overline{k(x)}/k(x))$. Goncharov showed that there is a canonical map  $H : \bigwedge^4 F^\times_\Qbb \to \oplus_{x\in X^2} \Bscr_2(\overline{k(x)})^{G_x}$
	satisfying that $H(k^\times \wedge \bigwedge^3 F^\times) = 0$ and $\alpha_2(1) \circ H = \partial^{3,3} \circ \partial^{4, 4}$, then constructed  a complex $\Gamma(X, 4)$  with the differentials $\alpha + \partial + H$  (see \cite[Theorem 8.1]{GR22}\footnote{There is a misprint in \cite[Theorem 8.1]{GR22}, the horizontal degree of the diagram should be counted from 0 to 2 from top to bottom.})
	\begin{equation}\label{bicomplex4}
		\begin{tikzcd}
			\Bscr_4(F) \arrow[r,"\alpha_4(1)"] & \Bscr_3(F) \otimes F^\times_\Qbb \arrow[d,"\partial^{4,2}"] \arrow[r,"\alpha_4(2)"] & \Bscr_2(F) \otimes \bigwedge^2 F^\times_\Qbb \arrow[d,"\partial^{4,3}"] \arrow[r, "\alpha_4(3)"] & \bigwedge^4 F^\times_\Qbb \arrow[d, "\partial^{4,4}"] \arrow[ldd, "H" near end]\\
			& \bigoplus_{p\in X^1} \Bscr_3(k(p)) \arrow[r,"\alpha_3(1)"] & \bigoplus_{p \in X^1} \Bscr_2(k(p)) \otimes k(p)^\times_\Qbb \arrow[d,"\partial^{3,2}"] \arrow[r,"\alpha_3(2)"] &\bigoplus_{p \in X^{1}} \bigwedge^3 k(p)^\times_\Qbb \arrow[d,"\partial^{3,3}"] \\
			& & \bigoplus_{q \in X^2} \left(\Bscr_2(\overline{k(q)})\right)^{G_q} \arrow[r,"\alpha_2(1)"']& \bigoplus_{q \in X^2} \bigwedge^2 \left(\overline{k(q)}^\times_\Qbb\right)^{G_q},
		\end{tikzcd}
	\end{equation}
	where $\Bscr_4(F)$ is placed in horizontal degree 1 and vertical degree 0, the maps $\alpha_i(j)$ are defined in \eqref{weightnGoncomplex}, and the residue maps $\partial^{4,i} := \oplus_{p\in X^1} \partial_p^{4, i}$ 
    are given by
	\begin{equation}\label{res4}
		\begin{aligned}
			&\partial_p^{4, 2} : \{f\}_3 \otimes g \mapsto \mathrm{ord}_p(g) \{f(p)\}_3, \\
			&\partial_p^{4, 3} : \{f\}_2 \otimes g \wedge h 
			\mapsto \{f(p)\}_2 \otimes T_p\{g, h\},\\
			&\partial_p^{4, 4} : f \wedge g \wedge h \wedge \ell \mapsto \mathrm{ord}_p(f) g_p \wedge h_p \wedge \ell_p + \mathrm{ord}_p(g) f_p \wedge h_p \wedge \ell_p  \\
			&\hspace{5cm} + \mathrm{ord}_p(h) f_p \wedge g_p\wedge\ell_p + \ord_p(\ell) f_p \wedge g_p \wedge \ell_p,
		\end{aligned}
	\end{equation}
	where $T_p$ is the tame symbol at $p \in X^1$, i.e., 
	\begin{equation*}
		T_p\{f, g\} := (-1)^{\ord_p(f) \ord_p(g)}\left(\dfrac{f^{\ord_p(g)}}{g^{\ord_p(f)}}\right)(p),
	\end{equation*}
	and $f_p \in k(p)^\times$ is the image of $f \pi_p^{-\ord_p(f)}$ under the canonical map $F^\times  \to k(p)^\times$. One can show that these residues maps do not depend on the choice of uniformizers (see \cite[Section 2.3]{Gon98}). Let us write down the total complex associated to \eqref{bicomplex4}, in degrees 1 to 4
	\begin{equation*}
		\begin{aligned}
			\Bscr_4(F) \xrightarrow{\alpha_4(1)_X}  \Bscr_3(F) \otimes F_\Qbb^\times \xrightarrow{\alpha_4(2)_X} (\Bscr_2(F) \otimes \bigwedge^2 &F_\Qbb^\times) \oplus \bigoplus_{p \in X^1} \Bscr_3(k(p)) \\
			&\xrightarrow{\alpha_4(3)_X} \bigwedge^4 F_\Qbb^\times \oplus (\bigoplus_{p\in X^1} \Bscr_2(k(p)) \otimes k(p)_\Qbb^\times),
		\end{aligned}
	\end{equation*}
	where 
	\begin{equation}\label{map}
		\begin{aligned}
			\alpha_4(1)_X &:= \alpha_4(1), \\
			\alpha_4(2)_X (u) &:= (\alpha_4(2)(u), \partial^{4,2}(u)),\\
			\alpha_4(3)_X (u_1, u_2) &:= (\alpha_4(3)(u_1), \partial^{4, 3}(u_1) - \alpha_3(1)(u_2)),
		\end{aligned}
	\end{equation} for $u \in \Bscr_3(F) \otimes F_\Qbb^\times$ and $u_1 \in \Bscr_2(F) \otimes \bigwedge^2 F_\Qbb^\times$, $u_2 \in  \oplus_{p\in X^1}\Bscr_3(k(p))$. We have $H^1(\Gamma(X, 4)) \simeq H^1(\Gamma(F, 4))$. There are canonical maps
    \begin{equation}
        H^2(\Gamma(X, 4)) \to H^2(\Gamma(F, 4)), \quad [u] \mapsto [u], 
    \end{equation}
    and 
    \begin{equation}
        H^3(\Gamma(X, 4)) \to H^3(\Gamma(F, 4)), \quad [(u_1, u_2)] \mapsto [u_1].
    \end{equation}
    One can check that these maps are well-defined. In fact, if $(u_1, u_2)$ and $(v_1, v_2)$ define the same class in $H^3(\Gamma(X, 4))$, then $u_1 - v_1 \in \Im \alpha_4(2)$, so  that $u_1$ and $v_1$ define the same class in  $H^3(\Gamma(F, 4))$.  Moreover, if $U \hookrightarrow X$ is an open regular subscheme of $X$, then there is a canonical map 
	$$H^3(\Gamma(X, 4)) \to H^3(\Gamma(U, 4)), \quad [(u_1, u_2)] \mapsto [(u_1, \pr u_2)],$$
	where $\pr : \oplus_{x\in X^1} \Bscr_3(k(p)) \to \oplus_{x\in U^1} \Bscr_3(k(p))$ is the canonical projection. One can show that this map is well-defined. We have the following exact sequence 
	\begin{equation}\label{10.4.7}
		\begin{aligned}
			0 \to H^2(\Gamma(X, 4)) \to H^2(\Gamma(F, 4)) &\xrightarrow{\overline{\partial^{4,2}}} \oplus_{p\in X^1} H^1(\Gamma(k(p), 3)) \\
			&\xrightarrow{\delta} H^3(\Gamma(X, 4)) \to H^3(\Gamma(F, 4)) \xrightarrow{\overline{\partial^{4,3}}} \oplus_{p\in X^1} H^2(\Gamma(k(p), 3)),
		\end{aligned}
	\end{equation}
	where the map $\delta := \oplus_{p\in X^1}\delta_p$ with 
	$$\delta_p : H^1(\Gamma(k(p), 3)) \to H^3(\Gamma(X, 4)), \quad [u] \mapsto [(0, u)].$$
	This map is well-defined. Indeed, if $[u] \in H^1(\Gamma(k(p), 3))$, then $\alpha_3(1) (u) = 0$, so that $(0, u) \in \Ker \alpha_4(3)_X$. Therefore, $(0,u)$ defines an element in $H^3(\Gamma(X, 4))$. And if $[u_1] = [u_2]$  in $H^1(\Gamma(k(p), 3))$, we have $u_1 = u_2$ in $\Bscr_3(k(p))$, hence $(0, u_1)$ and $(0, u_2)$ define the same class in $H^3(\Gamma(X, 4))$.
	\begin{rmk}\label{conditionLambda}
		Let $\lambda = \sum_j c_j \{f_j\}_2 \otimes g_j \wedge h_j  \in \Bscr_2(F) \otimes \bigwedge^2 F_\Qbb^\times$  be a 3-cocycle in $\Gamma(F, 4)$. Let $S$ be the closed subscheme of $X$ consisting of all zeros and poles of $g_j, h_j$ for all $j$. Note that $\partial^{4, 3}_p(\lambda) = 0$ for all $p \notin S$. So if $\partial^{4,3}_p(\lambda) = 0$ for all $p \in S$, the element
		\begin{equation}\label{Lambda}
			\Lambda := (\lambda, 0) \in (\Bscr_2(F) \otimes \bigwedge^2 F_\Qbb^\times) \oplus \bigoplus_{p \in X^1} \Bscr_3(k(p))
		\end{equation}
		defines a class in $H^3(\Gamma(X, 4))$.  Goncharov (\cite{GR22}) conjectured that there is a functorial isomorphism 
		\begin{equation}\label{15.1.3}
			H^3(\Gamma(X, 4)) \xrightarrow{\simeq} H^3_\Mcal(X, \Qbb(4)).
		\end{equation}
		Therefore, $\Lambda$ should give rise to an element in the motivic cohomology $H^3_\Mcal(X, \Qbb(4))$.
	\end{rmk}
	
	\begin{rmk}
	    Note that in the three-variable case (see \cite[Section 3.4]{Tri23}), we constructed a map from the cohomology of Goncharov's polylogarithmic complex to the motivic cohomology of smooth projective curves
        $$\beta : H^2(\Gamma(C, 3)) \to H^2_\Mcal(C, \Qbb(3)),$$
        which is compatible with taking regulators, i.e., the following diagram commutes
        \begin{equation*}
        \xymatrix{H^2(\Gamma(C, 3)) \ar[r]^{\beta} \ar[d]_{r_3(2)_C} & H^2_\Mcal(C, \Qbb(3)), \ar[dl]^{\frac{1}{2}\reg^{2, 3}_C}\\
            H^1(C(\Cbb), \Rbb(2))^+&}
        \end{equation*}
        where $r_3(2)_C$ is the Goncharov regulator map (see \cite{Gon98}) and $\reg^{2,3}_C$ is the Beilinson regulator map.
	\end{rmk}

	We have the following result on the Mahler measure of exact polynomials in four variables.
	
	\begin{thm}\label{4varsmain}
		Let $P(x, y, z, t)\in \Qbb[x, y, z, t]$ be a nonzero irreducible polynomial. We assume that \begin{equation}\label{15.1.9}
		x\wedge y \wedge z \wedge t = \sum_j  c_j f_j \wedge (1-f_j) \wedge g_j \wedge h_j \quad \text{in } \bigwedge^4 \Qbb(V_P)^\times_\Qbb,
	\end{equation}
	for some  $f_j \in \Qbb(V_P)^\times \setminus \{1\}$ and $g_j, h_j \in \Qbb(V_P)^\times$. Let $X$ be a smooth compactification of $W_P$. Denote by $Y$ the regular open subscheme of $W_P$ defined in \eqref{Y}. Assume that $\Gamma \subset V_P^\reg$ and that $\partial \Gamma \subset Y(\Cbb)$. Let $\lambda$ be the 3-cocycle of $\Gamma(F, 4)$ defined in Definition \ref{310}. Suppose that all the residues $\partial^{4, 3}_p(\lambda) = 0$ for all $p \in S$, where $S$ is the closed subscheme of $X$ consisting of zeros and poles of $g_j, h_j, g_j \circ \tau, h_j \circ \tau$ for all $j$.  Under Goncharov's conjecture \ref{15.1.3}, the element 
    \begin{equation}\label{Lambdamain}
    \Lambda := (\lambda, 0) \in (\Bscr_2(F) \otimes \bigwedge^2 F_\Qbb^\times) \oplus \bigoplus_{p\in X^1} \Bscr_3(k(p))
    \end{equation}
		 defines a class in  the motivic cohomology $H^3_\Mcal(X, \Qbb(4))$ and 
		\begin{equation}\label{mainid}
			\m(P) - \m(\tilde P) = -\dfrac{1}{(2\pi i)^3} \left<[\partial \Gamma], \reg^{3, 4}_X  (\Lambda)\right>_X,
		\end{equation}
		where $\tilde P \in \Qbb[x, y, z]$ is the leading coefficient of $P$ seen as a polynomial in $t$, and the pairing is given by
		$$\left<\cdot, \cdot \right>_X : H_2(X(\Cbb), \Qbb)^- \times H^2(X(\Cbb), \Rbb(3))^+ \to \Rbb(3).$$
	\end{thm}
    
	\begin{proof}
    Under the conditions that $\Gamma \subset V_P^\reg$ and $\partial \Gamma \subset Y(\Cbb)$, the boundary $\partial \Gamma$ defines an element in $H_2(Y(\Cbb), \Qbb)^-$ (see Theorem \ref{7.2.8}).  Let $\iota : Y \hookrightarrow X$ be the canonical embedding. We have $[\partial \Gamma] = \iota_*[\partial \Gamma]$, which therefore defines an element in $H_2(X(\Cbb), \Qbb)^-$.
    
    Since all the residues $\partial_p^{4,3}(\lambda)$ vanish, the element $\Lambda$ defines a class in $H^3(\Gamma(X, 4))$ (see Remark \ref{conditionLambda}). Under Goncharov's conjecture \ref{15.1.3}, $\Lambda$ defines an element in the motivic cohomology $H^3_\Mcal(X, \Qbb(4))$  and we have the following commutative diagram
		\begin{equation}\label{232}
			\xymatrix{
				H^3(\Gamma(X, 4))\ar[r]^{\text{conj}}_\simeq \ar[d]_{\iota^*} & H^3_\Mcal(X, \Qbb(4))\ar[r]^{\reg^{3,4}_X}\ar[d]^{\iota^*} & H^2(X(\Cbb), \Rbb(3))^+ \ar[d]^{\iota^*}\\
				H^3(\Gamma(Y, 4)) \ar[r]^{\text{conj}}_\simeq  & H^3_\Mcal(Y, \Qbb(4))\ar[r]^{\reg^{3, 4}_Y} & H^2(Y(\Cbb), \Rbb(3))^+.
			}
		\end{equation}
		We then obtain the following identities 
		\begin{equation*}
			\left<[\partial \Gamma], [\rho(\lambda)]\right>_Y = \left<[\partial \Gamma], \reg^{3, 4}_Y(\iota^*\Lambda) \right>_Y = \left<[\partial \Gamma], \iota ^*\reg^{3, 4}_X(\Lambda) \right>_X = \left<\iota_*[\partial \Gamma], \reg^{3, 4}_X(\Lambda) \right>_X = \left<[\partial \Gamma], \reg^{3, 4}_X (\Lambda)\right>_X,
		\end{equation*}
		The theorem then follows from Theorem \eqref{7.2.8}.
	\end{proof}

	\subsection{Beilinson's conjectures for $K3$ surfaces}\label{section K3} By an algebraic surface we mean a separated, finite type, geometrically connected scheme of dimension 2 over a field. Let $k$ be an arbitrary field of characteristic 0. Recall that a non-singular projective surface  $X$ over $k$ is called an algebraic $K3$ surface if the canonical sheaf $\omega_{X/k}$ is isomorphic to $\Ocal_X$ and $H^1(X, \Ocal_X) = 0$. By Serre's duality and the Hirzebruch-Riemann-Roch theorem (see \cite[Appendix A, Theorem 4.1]{Har77}), one can show that the Hodge diamond of an algebraic $K3$ surface $X$ is 
	\begin{center}
		\begin{tikzcd}[column sep=tiny,row sep=tiny]
			&&h^{2, 2}&&\\
			&h^{2,1}&&h^{1, 2}&\\
			h^{2, 0}&&h^{1, 1}&&h^{0, 2}\\
			&h^{1,0}&&h^{0, 1}&\\
			&&h^{0, 0}&&
		\end{tikzcd} $\hspace{2cm}$ \begin{tikzcd}[column sep=tiny,row sep=tiny]
			&&1&&\\
			&0&&0&\\
			1&&20&&1,\\
			&0&&0&\\
			&&1&&
		\end{tikzcd}
	\end{center}
	where $h^{p, q} := \dim H^q(X, \Omega_{X/k}^p)$. Recall that the Néron-Severi group of an algebraic surface $X$ is 
	$$\NS(X) = \Pic(X)/\Pic^0(X),$$
	where $\Pic^0(X)$ is the subgroup of $\Pic(X)$ consisting of the line bundles which are algebraically equivalent to 0. If $X$ is an algebraic $K3$ surface, the natural surjection is an isomorphism (see \cite[Proposition 2.4]{Huy15})
	\begin{equation}\label{PicNS}
		\Pic(X) \xrightarrow{\simeq}\NS(X).
	\end{equation}
	Recall that if $k$ is a field of characteristic 0, the Picard number of $X$ is defined as
	\begin{equation}\label{259}
		\rho (X_{\bar k}) := \rk \NS(X_{\bar k}) \le 20.
	\end{equation}
	When $\rho(X_{\bar k}) = 20$, we call $X$ a \textit{singular $K3$ surface}.

	Let $X$ be a smooth projective surface over  $k$. Recall that there is a direct decomposition of the Chow motive $h(X)$ in the category $\CHM(k, \Qbb)$ 
	\begin{equation}\label{CKdecom}
		h(X) \simeq \bigoplus_{i=0}^4 h^i(X),
	\end{equation}
	where $h_i(X) = (X, p_i(X), 0)$ and $p_i(X) \in A^2(X \times X)$ are certain projectors (see \cite{Scho94} for more details). The projector $p_2$ is uniquely decomposed into the transcendental and algebraic parts (see  \cite[Proposition 14.2.3]{KMP13}\footnote{Notice that \cite{KMP13} uses the covariant Chow motives category $\Mcal_\rat$, whose objects are the same as those in the  category of Chow motives $\CHM(k, \Qbb)$, and the morphisms are defined by $\Hom_{\Mcal_\rat}((X, p, m), (Y, q, n)) = q \Corr^{n-m}(Y, X) p.$}). 
	$$p_2 = p_{2, \alg} + p_{2, \tr}.$$
We then have the following decomposition in $\CHM(k, \Qbb)$
	\begin{equation}\label{218}
		h^2(X) = h^2_{\alg}(X) \oplus h^2_{\tr}(X),
	\end{equation}
	where $h^2_{\alg} (X) = (X, p_{2, \alg}, 0)$ and $h^2_{\tr}(X) = (X, p_{2, \tr}, 0).$ Consequently, if $H$ is a Weil cohomology (e.g, singular cohomology, de Rham cohomology, étale cohomology,...), we then have the following decomposition 
	\begin{equation}\label{cohotrans}
		H^2(X) = H^2(X)_{\alg} \oplus H^2(X)_{\tr}.
	\end{equation}
	One calls $H^2(X)_{\tr}$ the \textit{transcendental cohomology} of $X$. Let $E$ be a finite Galois extension over $k$ such that the action of $\Gal(\bar k/k)$ factors through $\Gal(E/k)$ on the Néron-Severi group $\NS(X_{\bar k})$. For $ 1 \le j \le \rho(X_{\bar k})$, there are projectors $p_{2, \alg, j} \in A^2((X\times X)_E)$ such that 
    \begin{equation}\label{halg}
		h^2_{\alg} (X_E) = \bigoplus_{j=1}^{\rho(X_{\bar k})} h^2_{\alg, j}(X_E),
	\end{equation}
    where $h^2_{\alg, j}(X_E) := (X_E, p_{2, \alg, j}, 0) \simeq \Lbb_E = (\Spec E, \id, -1) \in \CHM(E, \Qbb)$ for $1 \le j \le \rho(X_E)$.

	Denote by $M^i(X), M^2_{\alg} (X), M^2_{\tr} (X)$ the images of $h^i(X), h^2_{\alg}(X), h^2_{\tr} (X)$ respectively under the fully faithful contravariant functor from the category of Chow motives  $\CHM(k, \Qbb)$ to the category of Voevodsky geometrical motives $\DM_\gm(k, \Qbb)$ (see \cite[Proposition 2.1.4, Proposition 4.2.6]{VSF00}). We have the corresponding direct sum decompositions in the category $\DM_{\mathrm{gm}}(k, \Qbb)$
	\begin{equation}\label{CKdecom2}
		M(X) = \bigoplus_{i=0}^4 M^i(X), \quad  M^2(X) = M^2_{\alg} (X) \oplus M^2_{\tr} (X).
	\end{equation}

	\begin{definition}[\textbf{Transcendental  and algebraic parts of  motivic cohomology}] Let $X$ be a smooth projective surface over $k$. The algebraic and transcendental parts of the motivic cohomology of $X$ are respectively defined to be
		\begin{equation*}
			H^i_\Mcal(X, \Qbb(j))_{\alg} := \Hom_{\DM_\mathrm{gm}(k, \Qbb)}(M^2_{\alg}(X), \Qbb(j)[i]), \quad 
			H^i_\Mcal(X, \Qbb(j))_{\tr} := \Hom_{\DM_\mathrm{gm}(k, \Qbb)}(M^2_{\tr}(X), \Qbb(j)[i]).
		\end{equation*}

	\end{definition}

	\begin{prop}\label{370}
		Let $X$ be an algebraic $K3$ surface over a number field $k$. The motivic cohomology $H^3_\Mcal(X, \Qbb(4))$ depends only on the motive $M^2(X)$, i.e.,
		\begin{equation}
			H^3_\Mcal(X, \Qbb(4)) = \Hom_{\DM_\gm(k, \Qbb)} (M^2(X), \Qbb(4)[3]).
		\end{equation}
		In particular, we have 
		\begin{equation}
			H^3_\Mcal(X, \Qbb(4)) = H^3_\Mcal(X, \Qbb(4))_{\tr} \oplus H^3_\Mcal(X, \Qbb(4))_{\alg}.
		\end{equation}
	\end{prop}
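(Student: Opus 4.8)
The plan is to run the argument through the Chow–Künneth decomposition \eqref{CKdecom2} and to show that, of the five summands, only $M^2(X)$ contributes to $H^3_\Mcal(X,\Qbb(4))$. Applying the functor $\Hom_{\DM_\gm(k,\Qbb)}(-,\Qbb(4)[3])$ to $M(X)=\bigoplus_{i=0}^4 M^i(X)$ yields
$$
H^3_\Mcal(X,\Qbb(4)) = \bigoplus_{i=0}^4 \Hom_{\DM_\gm(k,\Qbb)}(M^i(X),\Qbb(4)[3]),
$$
so it suffices to prove that the summands with $i\in\{0,1,3,4\}$ all vanish.

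First I would treat the odd-degree pieces. The Hodge diamond of a $K3$ surface has $b_1(X)=b_3(X)=0$; equivalently $\Pic^0(X)=\Alb(X)=0$, so the Chow–Künneth projectors $p_1$ and $p_3$ may be chosen to be zero. Hence $M^1(X)=M^3(X)=0$ in $\DM_\gm(k,\Qbb)$ and the corresponding summands vanish trivially. It then remains to handle $i=0$ and $i=4$. Here $M^0(X)\simeq\Qbb$ is the unit and, since $h^4(X)=\Lbb^{\otimes 2}$ with $\Lbb\simeq\Qbb(1)[2]$ in this normalization, we have $M^4(X)\simeq\Qbb(2)[4]$. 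Using the identity $\Hom_{\DM_\gm}(\Qbb(a)[b],\Qbb(c)[d])=H^{d-b}_\Mcal(\Spec k,\Qbb(c-a))$, the two summands become
$$
\Hom(M^0(X),\Qbb(4)[3])=H^3_\Mcal(\Spec k,\Qbb(4)),\qquad \Hom(M^4(X),\Qbb(4)[3])=H^{-1}_\Mcal(\Spec k,\Qbb(2)).
$$
The second group vanishes because motivic cohomology is zero in negative degrees. For the first I would invoke Borel's computation of the rational $K$-theory of a number field: writing $H^n_\Mcal(\Spec k,\Qbb(j))\cong (K_{2j-n}(k)\otimes\Qbb)^{(j)}$, the space $K_{2i-1}(k)\otimes\Qbb$ is pure of Adams weight $i$ and $K_m(k)\otimes\Qbb=0$ for even $m\geq 2$, so $H^n_\Mcal(\Spec k,\Qbb(j))=0$ for every $n\geq 2$. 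In particular $H^3_\Mcal(\Spec k,\Qbb(4))=0$.

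Combining these vanishings leaves only the $i=2$ term, which gives the first identity $H^3_\Mcal(X,\Qbb(4))=\Hom_{\DM_\gm}(M^2(X),\Qbb(4)[3])$. The ``in particular'' statement then follows at once by applying $\Hom(-,\Qbb(4)[3])$ to the splitting $M^2(X)=M^2_{\alg}(X)\oplus M^2_{\tr}(X)$ of \eqref{CKdecom2} and recalling the definitions of the algebraic and transcendental parts of the motivic cohomology. I expect the main obstacle to be the bookkeeping for the $i=0,4$ contributions: fixing the normalization $\Lbb\simeq\Qbb(1)[2]$ so that the Tate twists and shifts land correctly, and then citing Borel to kill $H^3_\Mcal(\Spec k,\Qbb(4))$. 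By contrast, the vanishing $M^1(X)=M^3(X)=0$ is comparatively soft, relying only on the vanishing of the odd Betti numbers of a $K3$ surface.
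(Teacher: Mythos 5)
Your proposal is correct and follows essentially the same route as the paper's proof: decompose via the Chow--K\"unneth splitting $M(X)=\bigoplus_{i=0}^4 M^i(X)$, kill the $i=1,3$ summands using $\Pic^0_X=\Alb_X=0$ for a $K3$ surface, kill $i=0$ by Borel's theorem on the motivic cohomology of number fields, and kill $i=4$ by the twist/shift computation $\Hom(\Qbb(2)[4],\Qbb(4)[3])=H^{-1}_\Mcal(\Spec k,\Qbb(2))=0$ (which is what the paper's terse ``by symmetry'' amounts to). The only cosmetic differences are that you phrase Borel's vanishing through rational $K$-theory and Adams weights rather than the paper's direct motivic-cohomology statement, and the paper routes the vanishing of $h^1(X)$ through Scholl's identification $\End_{\CHM}(h^1(X))=\End(\Pic^0_X)\otimes\Qbb$, which is the precise justification behind your claim that $p_1$ and $p_3$ may be chosen to be zero.
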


	\begin{proof}
		We have
		\begin{equation*}
			\begin{aligned}
				H^3_\Mcal(X, \Qbb(4)) \stackrel{\mathrm{def}}{=} \Hom_{\DM_\gm(k, \Qbb)} (M(X), \Qbb(4)[3]) &= \bigoplus_{i=0}^4 \Hom_{\DM_\gm(k, \Qbb)} (M^i(X), \Qbb(4)[3]).
			\end{aligned}
		\end{equation*}
		Denote by $r_1, r_2$ the number of real and complex embeddings $k \hookrightarrow \Cbb$, respectively. 
We have 		\begin{equation*}		\begin{aligned}
	\Hom_{\DM_\gm(k, \Qbb)} (M^0(X), \Qbb(4)[3]) =    \Hom_{\DM_\gm(k, \Qbb)} (\Spec k, \Qbb(4)[3]) = H^3_\Mcal(\Spec k, \Qbb(4)) = 0,
			\end{aligned}
		\end{equation*}
        where the last equality follows from Borel's theorem (\cite{Bor74}).		By symmetry, one also has $$ \Hom_{\DM_\gm(k, \Qbb)} (M^4(X), \Qbb(4)[3])  = 0.$$  Since 
        $\End_{\CHM(k, \Qbb)}(h^1(X)) \simeq \End(\Pic_X^0) \otimes \Qbb$
        (see e.g., \cite{Scho94}), and the fact that the Picard variety 
		$\Pic_X^0$ is trivial, we get $h^1(X) = 0$. By symmetry, we also have $h^3(X) = 0$. Finally, we have 
		\begin{equation*}
			\begin{aligned}
				H^3_\Mcal(X, \Qbb(4)) &= \Hom_{\DM_\gm(k, \Qbb)} (M^2(X), \Qbb(4)[3]) \\
				&\simeq \Hom_{\DM_\gm(k, \Qbb)} (M^2_{\tr}(X), \Qbb(4)[3]) \oplus \Hom_{\DM_\gm(k, \Qbb)} (M^2_{\alg}(X), \Qbb(4)[3]) \\
				&= H^3_\Mcal(X, \Qbb(4))_{\tr} \oplus H^3_\Mcal(X, \Qbb(4))_{\alg}.
			\end{aligned}
		\end{equation*}
	\end{proof}

	\begin{rmk}[{\textbf{Transcendental and algebraic parts of Deligne-Beilinson cohomology}}]
		Let $X$ be a smooth projective surface over $\Rbb$. By \eqref{2.1.8}, we can define the algebraic and transcendental parts  of the Deligne-Beilinson cohomology of $X$ as follows
		\begin{equation*}
			H^3_\Dcal(X, \Rbb(4))_{\alg} := H^2(X(\Cbb), \Rbb(3))^+ \cap H^2(X(\Cbb), \Rbb(3))_{\alg}, 
		\end{equation*}
		and 
		\begin{equation*}
			H^3_\Dcal(X, \Rbb(4))_{\tr} := H^2(X(\Cbb), \Rbb(3))^+ \cap H^2(X(\Cbb), \Rbb(3))_{\tr},
		\end{equation*}
        where $``+"$ means the invariant part under the action of de Rham involution $F_\dR$ (see \eqref{82}). One can see \cite[Definition 12.2.5]{Tri24} for the general definition. 
	\end{rmk}

	We have the following result for the  transcendental and algebraic parts of the Deligne-Beilinson cohomology of singular $K3$ surfaces.
	\begin{prop}\label{DBK3}
		Let $X$ be a singular $K3$ surface over $\Qbb$. Then we have 
		$$\dim_\Rbb H^3_\Dcal(X_\Rbb, \Rbb(4))_{\tr} = 1.$$
		Furthermore, if $X$ has Picard rank 20 over $\Qbb$ (i.e., the Néron-Severi group is  generated by 20 divisors defined over $\Qbb$), then 
		$$\quad \dim_\Rbb H^3_\Dcal(X_\Rbb, \Rbb(4))_{\alg} = 20.$$ 
	\end{prop}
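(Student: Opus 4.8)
The plan is to compute both dimensions by reducing the Deligne–Beilinson cohomology to Betti cohomology and then analysing the action of the real Frobenius on the transcendental and algebraic pieces separately.

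\textbf{Step 1 (Reduction to an eigenspace of $F_\infty^*$).} Since $\dim X = 2 < 4$, the isomorphism \eqref{2.1.8} gives $H^3_\Dcal(X_\Rbb,\Rbb(4)) \cong H^2(X(\Cbb),\Rbb(3))^+$, where $+$ denotes the $F_{\dR}$-invariants. The decomposition \eqref{cohotrans} of $H^2$ is induced by projectors in $A^2(X\times X)$ defined over $\Qbb$, so its Betti realization commutes with $F_\infty^*$ and is therefore $F_{\dR}$-stable; following the definitions in the Remark preceding the statement, it then suffices to compute the $+$-part of each summand. Writing $H^2(X(\Cbb),\Rbb(3)) = H^2(X(\Cbb),\Rbb)\otimes(2\pi i)^3$ and using that complex conjugation acts on $(2\pi i)^3$ by $(-1)^3=-1$, exactly as in Example \ref{Deligne coho of number field}, the involution $F_{\dR}$ is identified with $-F_\infty^*$ on the untwisted cohomology. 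Thus I must compute the $(-1)$-eigenspace of $F_\infty^*$ on $H^2(X(\Cbb),\Rbb)_{\tr}$ and on $H^2(X(\Cbb),\Rbb)_{\alg}$.

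\textbf{Step 2 (Transcendental part).} As $\rho(X_{\bar\Qbb})=20$, the transcendental lattice has rank $22-20=2$, so $T:=H^2(X(\Cbb),\Qbb)_{\tr}$ is $2$-dimensional with $T_\Cbb = T^{2,0}\oplus T^{0,2}$ and $T^{1,1}=0$. Because $F_\infty$ is anti-holomorphic, $F_\infty^*$ interchanges the two conjugate lines $T^{2,0}$ and $T^{0,2}$. A real involution of a $2$-dimensional space whose complexification swaps a pair of conjugate complex lines has, in a basis $\{v,F_\infty^* v\}$, the matrix $\left(\begin{smallmatrix}0&1\\1&0\end{smallmatrix}\right)$, hence eigenvalues $+1$ and $-1$. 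Consequently the $(-1)$-eigenspace of $F_\infty^*$ on $T_\Rbb$ is $1$-dimensional, which yields $\dim_\Rbb H^3_\Dcal(X_\Rbb,\Rbb(4))_{\tr}=1$.

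\textbf{Step 3 (Algebraic part).} Under the hypothesis that $X$ has Picard rank $20$ over $\Qbb$, the space $H^2(X(\Cbb),\Qbb)_{\alg} = \NS(X_{\bar\Qbb})\otimes\Qbb$ is spanned by the classes of $20$ divisors $D$ defined over $\Qbb$. For such a $D$, complex conjugation preserves the subset $D(\Cbb)\subset X(\Cbb)$ but reverses its orientation as a real $2$-cycle, being anti-holomorphic on the curve $D$, while preserving the orientation of $X(\Cbb)$, being anti-holomorphic on a surface; Poincaré duality then gives $F_\infty^*[D]=-[D]$. Equivalently, the class of a cycle defined over $\Qbb$ is invariant in $\Qbb(1)$-twisted cohomology, which is the same sign statement. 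Hence all $20$ generators lie in the $(-1)$-eigenspace, giving $\dim_\Rbb H^3_\Dcal(X_\Rbb,\Rbb(4))_{\alg}=20$. The ``defined over $\Qbb$'' hypothesis is essential here: a divisor defined only over a nontrivial extension could instead contribute to the $+1$-eigenspace of $F_\infty^*$.

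\textbf{Main obstacle.} The delicate points are entirely about signs and Tate twists: correctly identifying $F_{\dR}$ with $(-1)^3 F_\infty^*$ in Step 1, and pinning down $F_\infty^*[D]=-[D]$ in Step 3 through the orientation behaviour of complex conjugation. As an independent check on Step 2 I would recompute the transcendental dimension from the Hodge-theoretic dimension formula for real Deligne cohomology, which for a pure Hodge structure of type $(2,0)+(0,2)$ returns $\dim_\Cbb T^{2,0}=1$, matching the eigenspace count above.
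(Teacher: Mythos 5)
Your proposal is correct and follows essentially the same route as the paper: reduce via \eqref{2.1.8} to the action of conjugation on $H^2(X(\Cbb),\Rbb(3))$, split along the decomposition \eqref{cohotrans} into transcendental and algebraic parts, and count eigenspaces — in particular your Step 3 (conjugation preserves $D(\Cbb)$ but reverses its orientation for each of the 20 divisors defined over $\Qbb$) is exactly the paper's argument for the algebraic part. The only difference is cosmetic: for the transcendental part the paper exhibits explicit eigenvectors $\omega - \bar\omega$ and $i(\omega + \bar\omega)$ built from a generator $\omega$ of $\Omega^2(X(\Cbb))$ chosen to be defined over $\Qbb$, whereas you invoke the abstract fact that a real involution whose complexification swaps the conjugate lines $T^{2,0}$ and $T^{0,2}$ must have eigenvalues $+1$ and $-1$, each with multiplicity one; both yield $\dim_\Rbb H^3_\Dcal(X_\Rbb,\Rbb(4))_{\tr}=1$.
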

	\begin{proof}
		Recall that we have the following direct decomposition (see \eqref{cohotrans})
		\begin{equation*}
			H^2(X(\Cbb), \Rbb(3)) = H^2(X(\Cbb), \Rbb(3))_{\alg} \oplus H^2(X(\Cbb), \Rbb(3))_{\tr},
		\end{equation*}
		with
		\begin{equation}\label{239}
			H^2(X(\Cbb), \Rbb(3))_{\alg} \simeq  NS(X(\Cbb))\otimes \Rbb(3) \simeq \Rbb(3)^{\rho(X)} = \Rbb(3)^{20}.
		\end{equation}
		Therefore, 
		$$\dim H^2(X(\Cbb), \Rbb(3))_{\tr} = 22-20 = 2.$$
		On the other hand, we have
		\begin{equation*}
			H^2(X(\Cbb), \Rbb(3))_{\tr} = \{\omega \in H^2(X(\Cbb), \Cbb)_{\tr} \ |\ \bar \omega = -\omega\}.
		\end{equation*}
		Indeed, if $\omega \in H^2(X(\Cbb), \Rbb(3))_{\tr}$, we have $\omega = (2\pi i)^3 \omega'$ with some $\omega' \in H^2(X(\Cbb), \Rbb)_{\tr}$, then $$\bar \omega = - (2\pi i) \omega' = - \omega.$$ As $X$ is a singular $K3$ surface over $\Qbb$, we have 
		$$H^2(X, \Cbb)_{\tr} = \Omega^2(X(\Cbb)) \oplus  \overline{\Omega^2(X(\Cbb))},$$
		with $\dim_\Cbb \Omega^2(X(\Cbb)) = \dim_\Cbb \overline{\Omega^2(X(\Cbb))} = 1$.  Let $\omega$ be a generator of $\Omega^2(X(\Cbb))$ that is defined over $\Qbb$. We set 
		$$\omega_1 :=  \omega - \bar \omega \text{ and } \omega_2 := i(\omega + \bar \omega).$$ It is clear to see $\omega_1, \omega_2 \in H^2(X(\Cbb), \Rbb(3))_{\tr}$. As $\omega$ is defined over $\Qbb$, we have 
		\begin{equation*}
			\begin{aligned}
				F^{\dR}(\omega_1) &\stackrel{\mathrm{def}}{=} c^*(\bar \omega_1) = c^*(\bar \omega) - c^*(\omega) = \omega - \bar \omega = \omega_1,\\
				F^{\dR}(\omega_2) &\stackrel{\mathrm{def}}{=} c^*(\bar \omega_2) = -i (c^*(\bar \omega) + c^*(\omega) = -i(\omega + \bar \omega) = -\omega_2.\\
			\end{aligned}
		\end{equation*}
		This implies that 
		\begin{equation*}
			\begin{aligned}
				\dim_\Rbb H^3_\Dcal(X_\Rbb, \Rbb(4))_{\tr} = \dim_\Rbb H^2 (X(\Cbb), \Rbb(3))^+_{\tr} = 1.
			\end{aligned}
		\end{equation*}
		
		If $X$ has Picard rank $20$ over $\Qbb$, the Néron-Severi group $\NS(X(\Cbb))$ has a basis $\{[D_i], 1\le i \le 20\}$, where $D_i$ are irreducible smooth projective curves defined over $\Qbb$. The complex conjugation then acts on the set of complex points $D_i(\Cbb)$ and reverses the orientation of $D_i(\Cbb)$ for all $1 \le i \le 20$. On the other hand, we have $\dim H^2(X(\Cbb), \Rbb(3))_{\alg} = 20$. This implies that 
		$$\dim_\Rbb H^3_{\Dcal}(X_\Rbb, \Rbb(4))_{\alg} = \dim_{\Rbb} H^2(X(\Cbb), \Rbb(3))_{\alg}^+ = \dim_\Rbb H^2(X(\Cbb) , \Rbb)_{\alg}^- = 20.$$
	\end{proof}

	Before stating Beilinson’s conjecture, we briefly recall the definition of the 
$L$-function associated with pure motives Let $X$ be a smooth projective variety over $\Qbb$ of dimension $n$.

\begin{definition}[\textbf{$L$-factors at primes}, {\cite[\textsection \ 1.4]{Nek13}}]
Let $p$ be a prime number. For $0\le i \le 2n$, we set
\begin{equation*}
    L_p(h^i(X), s) = \det (1 - \mathrm{Frob}_p p^{-s} | H^i_\ell(X)^{I_p})^{-1},
\end{equation*}
where $\ell \neq p$ is a prime number,  $\mathrm{Frob_{p}}\in \Gal(\bar \Qbb/\Qbb)$ is a Frobenius element at $p$ acting on the étale realization 
\begin{equation*}
   H^i_\ell(X) :=  H^i_{\et}(X_{\bar \Qbb}, \Qbb_\ell),
\end{equation*}
and $I_p$ is the inertia group at $p$. If $X$ is a smooth projective surface, we define
\begin{equation*}
    L_p(h^2_{\tr}(X), s) = \det (1 - \mathrm{Frob}_p p^{-s} | H^2_\ell(X)_{\tr}^{I_p})^{-1},
\end{equation*}
where $H^2_\ell(X)_{\tr}$ denotes the transcendental part $H^2_\et(X_{\bar \Qbb}, \Qbb_\ell)_{\tr}$ (see \eqref{218}).
\end{definition}

\begin{rmk}
If $X$ has good reduction at $p$, i.e., if $X$ admits a projective model over $\Zbb$ whose reduction modulo $p$ is smooth over $\Fbb_p$,  then $I_p$ acts trivially on $H^i_\ell(X)$ and it was shown by Deligne that $L_p(h^i(X), s)$ have integer coefficients and is independent of $\ell \neq p$ (\cite[\textsection \ 1.4]{Nek13}, \cite[Section 5.6.2]{Kah20}). If $X$ has bad reduction at $p$ (there are finitely many such primes $p$), Serre conjectured that $L_p(h^i(X),s)$ is independent of the choice of $\ell$ and has integer coefficients (see \cite[Conjecture 5.45]{Kah20}). This conjecture holds if $i \in \{0, 1, 2n-1, 2n\}$ (see
    \cite[Theorem 5.46]{Kah20}).
\end{rmk}

\begin{definition}[{\textbf{$L$-function}, \cite[\textsection \ 1.5]{Nek13}}]\label{301}
The $L$-function associated to the motive $h^i(X)$ is defined (conjecturally) by
\begin{equation*}
    L(h^i(X), s) = \prod_{p \ \mathrm{prime}} L_p(h^i(X), s).
\end{equation*}
If $X$ is a smooth projective surface, we define (conjecturally)
\begin{equation*}
     L(h^2(X)_{\tr}, s) = \prod_{p \ \mathrm{prime}} L_p(h^2_{\tr}(X), s).
\end{equation*}
\end{definition}

	Notice that the real Deligne-Beilinson cohomology of smooth varieties over $\Rbb$ or $\Cbb$ can be defined as an $\mathbb{E}$-cohomology, where $\mathbb{E}$ is a certain motivic ring spectrum in the $\Abb^1$-derived category (see e.g., \cite[Appendix A.2, A.3]{BZ20}). Therefore, the Beilinson regulator map sends the transcendental (resp. algebraic) part of motivic cohomology to the transcendental (resp. algebraic) part of Deligne-Beilinson cohomology.
    Moreover, as the Deligne–Beilinson cohomology of singular $K3$ surfaces is one-dimensional (see Proposition \ref{DBK3}), Beilinson’s conjecture for the transcendental part of their motivic cohomology can be stated as follows. 
	
	\begin{conj}\label{2.7}
		Let $X$ be a singular $K3$ surface over $\Qbb$. Let $\xi \in H^{3}_\Mcal(X, \Qbb(4))_{\tr}$ be a nontrivial element and  $\gamma$ be a generator of $H_2(X(\Cbb), \Qbb)_{\tr}^-$. We have
		\begin{equation} 
			\dfrac{1}{(2 \pi i)^3} \left<\gamma, \reg^{3, 4}_{X}(\xi)\right>_{X, \tr} = a \cdot L'(h^2_{\tr}(X), -1), \quad a \in \Qbb^\times,
		\end{equation}
		where  the pairing is given by 
		\begin{equation}\label{241}
			\left< \cdot, \cdot \right>_{X, \tr} : H_2(X(\Cbb), \Qbb)_{\tr}^- \times H^2(X(\Cbb), \Rbb(3))_{\tr}^+ \to \Rbb(3), 
		\end{equation}
        and $h^2_{\tr}(X)$ is the transcendental part of the Chow motive $h^2(X)$ (see \eqref{218}).
	\end{conj}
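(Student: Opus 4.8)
The plan is to recognize the claimed identity as exactly Beilinson's conjecture (in the formulation surveyed in \cite{Nek13}) for the motive $M := h^2_{\tr}(X)$, and to argue, first, that the statement is the correct specialization of that general conjecture, and second, to reduce it to a case accessible by automorphic methods. I would begin by recording the invariants of $M$: since $X$ is a singular $K3$ surface, its transcendental lattice has rank $22 - 20 = 2$, so $M$ is a rank-$2$ motive, pure of weight $2$, with Hodge realization of type $(2,0)+(0,2)$ (this is the content of the decomposition \eqref{218} together with Proposition \ref{DBK3}). Its $L$-function $L(h^2_{\tr}(X), s)$ thus satisfies a functional equation exchanging $s$ and $3 - s$, and its archimedean $L$-factor has a pole at $s = -1$, forcing a trivial zero of $L(h^2_{\tr}(X), s)$ there. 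The group in play, $H^3_\Mcal(X, \Qbb(4))_{\tr}$, is of the shape $H^{i+1}_\Mcal(X, \Qbb(n))$ with $i = 2$ and $n = 4$, so the $L$-value predicted by Beilinson sits at $s = i + 1 - n = -1$, a \emph{non-critical} point to the left of the critical strip; this is precisely the range where the Beilinson regulator to real Deligne cohomology governs the leading term, and the trivial zero explains why $L'(h^2_{\tr}(X), -1)$, and not the value, appears.

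Next I would verify that the statement is well-posed, i.e.\ that both arguments of the pairing lie in one-dimensional spaces, so that the conjecture asserts that one real number is a rational multiple of one period. This is supplied by Proposition \ref{DBK3}: the target $H^2(X(\Cbb), \Rbb(3))^+_{\tr}$ of $\reg_{X, \tr}(\xi)$ is one-dimensional over $\Rbb$, while the rank-$2$ transcendental Betti realization splits under $F_\infty$ into $\pm$ eigenspaces of dimension $1$ each, so that $H_2(X(\Cbb), \Qbb)^-_{\tr}$, from which $\gamma$ is drawn, is one-dimensional over $\Qbb$. Hence $\langle \gamma, \reg_{X, \tr}(\xi)\rangle_{X, \tr}$ is a well-defined element of $\Rbb(3)$ attached to the pair $(\xi, \gamma)$, and the assertion that it equals $a \cdot L'(h^2_{\tr}(X), -1)$ with $a \in \Qbb^\times$ is meaningful; the required non-vanishing of $a$ for nontrivial $\xi$ is exactly Beilinson's injectivity (nondegeneracy of the regulator) in the non-critical range.

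With the framework fixed, the substantive step is to realize the right-hand side as a concrete automorphic $L$-function and to match the regulator against it. Here I would invoke the modularity of singular $K3$ surfaces (Livn\'e): the $2$-dimensional Galois representation carried by $M$ has complex multiplication by the imaginary quadratic field attached to $X$, and $L(h^2_{\tr}(X), s) = L(f, s)$ for a CM newform $f$ of weight $3$. Under an isomorphism $M^2_{\tr}(X) \simeq M(f)$ in $\DM_\gm(\Qbb, \Qbb)$ I would transport $\xi$ to a motivic cohomology class of the Kuga--Sato variety computing weight-$3$ forms (for weight $3$ this is the universal elliptic curve over a modular curve), where the Eisenstein symbol of Beilinson and the Rankin--Selberg integral of Deninger--Scholl evaluate regulators of such forms explicitly, producing the non-critical value $L(f, 4)$, equivalently $L'(f, -1)$ after the functional equation.

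The main obstacle is that Beilinson's conjecture is open in this generality, so no unconditional proof is available; the realistic target is a reduction to, or verification of, a known case. The crux is the final reduction above: one must show that the specific class $\xi$ maps, under $M^2_{\tr}(X) \simeq M(f)$, to a motivic class on the Kuga--Sato variety whose regulator is a \emph{computably nonzero rational} multiple of $L'(f, -1)$. Even granting the CM structure, controlling the relevant transcendental period and, above all, establishing the rationality $a \in \Qbb^\times$ (rather than merely $a \in \Rbb^\times$) is delicate; it would require either an explicit Eisenstein-symbol representative together with a Kronecker-limit-formula evaluation of the CM Hecke $L$-value, or a direct period comparison through the CM abelian variety uniformizing $M(f)$. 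Pinning down that constant is where I expect essentially all the difficulty to concentrate.
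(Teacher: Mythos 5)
The statement you were asked about is a conjecture: the paper states it as the specialization of Beilinson's general conjecture to the motive $h^2_{\tr}(X)$ and offers no proof, so your refusal to claim one is the correct call, and your assessment is essentially the paper's own framing. Specifically, your well-posedness check coincides with Proposition \ref{DBK3} (the rank-$2$ transcendental realization splits into one-dimensional $\pm$ eigenspaces, so $\dim_\Rbb H^3_\Dcal(X_\Rbb, \Rbb(4))_{\tr} = 1$ and $H_2(X(\Cbb), \Qbb)^-_{\tr}$ is a line), your placement of the $L$-value at $s = i+1-n = -1$ with a simple (trivial) zero is the standard Beilinson bookkeeping the paper implicitly uses, and your invocation of Livn\'e--Sch\"utt modularity to rewrite the right-hand side as $a \cdot L'(f,-1)$ is exactly how the paper passes from Conjecture \ref{2.7} to Conjecture \ref{242} via Theorem \ref{liv}. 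Your further sketch --- transporting $\xi$ through $M^2_{\tr}(X) \simeq M(f)$ to a Kuga--Sato variety and evaluating the regulator by Eisenstein symbols and Deninger--Scholl --- goes beyond anything attempted in the paper, which only ever \emph{assumes} the conjecture (in Theorem \ref{305}); you correctly identify that establishing $a \in \Qbb^\times$ there is where the open difficulty lies, so no gap is being papered over.
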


	Let us recall the following result of Livné.
	\begin{thm}[{\textbf{Livné's modularity theorem}, \cite{Liv94}}] Let $X$ be a singular $K3$ surface over $\Qbb$ (not necessarily having Picard rank 20 over $\Qbb$). Then there is a modular form of weight 3 such that
		$$L(h^2_{\tr}(X), s) = L(f, s).$$
	\end{thm}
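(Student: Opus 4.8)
The plan is to prove modularity by showing that the transcendental Galois representation is induced from an imaginary quadratic field, and then to invoke the classical theory of CM modular forms. Write $V_\ell := H^2_\ell(X)_{\tr} = H^2_\et(X_{\bar\Qbb}, \Qbb_\ell)_{\tr}$ for the $\ell$-adic transcendental realization, and let $\rho_\ell : \Gal(\bar\Qbb/\Qbb) \to \GL(V_\ell)$ be the associated representation. Since $X$ is a singular $K3$ surface, $\rho(X_{\bar\Qbb}) = 20$, so by the decomposition \eqref{cohotrans} the space $V_\ell$ is two-dimensional over $\Qbb_\ell$. The cup product on $H^2_\et$ restricts to a non-degenerate symmetric $\Gal(\bar\Qbb/\Qbb)$-equivariant pairing $\langle \cdot, \cdot \rangle$ on $V_\ell$ with similitude factor a power of the cyclotomic character; hence $\rho_\ell$ takes values in the orthogonal similitude group of $(V_\ell, \langle \cdot,\cdot\rangle)$, i.e.\ it is a two-dimensional orthogonal representation.

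First I would exploit this orthogonality. The special orthogonal group $\mathrm{SO}(2)$ is abelian, and the quotient $\mathrm{O}(2)/\mathrm{SO}(2) = \{\pm 1\}$ defines a quadratic character $\varepsilon : \Gal(\bar\Qbb/\Qbb) \to \{\pm 1\}$ cutting out a quadratic field $K$. Over $\bar\Qbb$ the Hodge structure on $T(X)\otimes\Qbb$ has type $(2,0)+(0,2)$ with no $(1,1)$-part, so it is an irreducible rank-two Hodge structure whose endomorphism algebra is an imaginary quadratic field; this forces $K = \Qbb(\sqrt{-d})$ to be imaginary quadratic, with $-d$ the discriminant of the transcendental lattice, and forces $\rho_\ell$ to be irreducible. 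Restricting to $\Gal(\bar\Qbb/K)$, the image lands in the abelian group $\mathrm{SO}(2)$, so $\rho_\ell|_{\Gal(\bar\Qbb/K)}$ splits as a sum of two characters $\chi \oplus \chi^c$ conjugate under $\Gal(K/\Qbb)$; irreducibility then yields $\rho_\ell \simeq \Ind_{\Gal(\bar\Qbb/K)}^{\Gal(\bar\Qbb/\Qbb)} \chi$.

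Next I would identify $\chi$ with an algebraic Hecke character. Since $V_\ell$ is the realization of a motive with Hodge--Tate weights $\{0,2\}$ (coming from $H^{2,0}$ and $H^{0,2}$), the character $\chi$ is locally algebraic, and by the main theorem of complex multiplication it is the $\ell$-adic avatar of an algebraic Hecke character $\psi$ of $K$ of infinity type $z \mapsto z^2$; strict compatibility of the system $\{V_\ell\}_\ell$ guarantees that $\psi$ is independent of $\ell$. By Hecke's construction of theta series attached to Gr\"ossencharaktere of imaginary quadratic fields (in the form due to Shimura and Ribet), $\psi$ gives rise to a CM newform $f = \sum_n a_n q^n$ of weight $2+1 = 3$, with level and nebentypus determined by the conductor and central character of $\psi$, satisfying $L(f,s) = L(\psi,s)$.

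Finally I would match the $L$-functions. By inductivity of Hasse--Weil $L$-functions, $L(\rho_\ell, s) = L(\Ind\,\chi, s) = L(\psi, s) = L(f,s)$, with matching Euler factors at every prime (split, inert, and ramified primes being handled through the local behaviour of $\psi$ and of the inertia invariants $V_\ell^{I_p}$). Since by definition $L(h^2_{\tr}(X), s) = \prod_{p} \det(1 - \mathrm{Frob}_p\, p^{-s} \mid V_\ell^{I_p})^{-1} = L(\rho_\ell, s)$, we conclude $L(h^2_{\tr}(X), s) = L(f,s)$. The main obstacle is the third step: proving that the $\ell$-adic character $\chi$ is genuinely algebraic of the correct infinity type and independent of $\ell$, which requires the comparison between the \'etale and de Rham/Hodge realizations to read off the Hodge--Tate weights, together with the main theorem of complex multiplication to descend from a geometric Galois character to a Hecke character.
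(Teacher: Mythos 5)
The paper does not prove this statement at all: it is imported verbatim from Livn\'e \cite{Liv94}, so the only meaningful comparison is with Livn\'e's own argument --- and your sketch does reproduce its essential architecture (two-dimensionality of $H^2_\ell(X)_{\tr}$ from $22-20=2$, orthogonality from cup product, induction from a quadratic field via $\mathrm{O}(2)/\mathrm{SO}(2)$, identification of the inducing character with an algebraic Hecke character of infinity type $z\mapsto z^2$, and the Hecke--Shimura theta series giving a CM newform of weight $3$). As a blueprint this is the correct, standard route.

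Two steps, however, are asserted rather than proved, and they are exactly where the real work in \cite{Liv94} lies. First, irreducibility of $\rho_\ell$ does not follow formally from irreducibility of the rational Hodge structure on $T(X)\otimes\Qbb$; there is no implication from Hodge simplicity to Galois simplicity without a comparison input. The clean repair is Deligne's purity: a one-dimensional subquotient of $V_\ell$ would be Hodge--Tate of weight $0$ or $2$, hence of the form $\mu$ or $\mu\cdot\chi_{\mathrm{cyc}}^{-2}$ with $\mu$ finite order, so its Frobenius eigenvalues at good primes would have absolute value $1$ or $p^2$, contradicting $|\alpha|=p$ for weight-$2$ cohomology. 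Second, you juxtapose two facts --- the Galois-theoretic quadratic character $\varepsilon$ cuts out some quadratic field $K$, and the endomorphism algebra of the transcendental Hodge structure is $\Qbb(\sqrt{-d})$ --- and conclude ``this forces $K=\Qbb(\sqrt{-d})$''; that identification requires an actual comparison between the $\ell$-adic and Hodge realizations (equivalently, a Mumford--Tate-type statement for the CM motive), which is the content of Livn\'e's ``motivic'' hypothesis, not a formal consequence. (Imaginarity of $K$, incidentally, follows more directly from $\det\rho_\ell(F_\infty)=-1$, visible in the $\pm1$ eigenvalues of complex conjugation on $T(X)\otimes\Rbb$ computed in Proposition \ref{DBK3}.) Finally, your last step waves at the bad Euler factors: matching $\det(1-\mathrm{Frob}_p p^{-s}\mid V_\ell^{I_p})$ with the local factors of $f$ at ramified primes needs genuine local analysis, and in the literature the precise level and newform are pinned down only under extra hypotheses --- this is what Sch\"utt's refinement, quoted as Theorem \ref{liv} in the paper, supplies when $X$ has Picard rank $20$ over $\Qbb$.
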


	Let $X$ be a smooth projective surface over $\Qbb$. The Néron-Severi group $\NS(X(\Cbb))$ is endowed with an intersection pairing, which is compatible with the cup product of the singular cohomology $H^2(X(\Cbb), \Zbb)$. Therefore, $\NS(X(\Cbb))$ is  a sublattice of $H^2(X(\Cbb), \Zbb)$.  Since this intersection pairing is non-degenerate, one defines $\Tbb(X(\Cbb)) = \NS(X(\Cbb))^\bot$,  called the \textit{transcendental lattice}. Note that $\Tbb(X(\Cbb)) \otimes \Qbb \simeq H^2(X(\Cbb), \Qbb)_{\tr}$ (see \eqref{cohotrans}). Schütt gave explicitly the modular form in Livné's theorem in terms of the determinant of the transcendental lattice when $X$ has Picard rank 20 over $\Qbb$.

	\begin{thm}[{\cite[Theorem 4, Lemma 17]{Schu08}}]\label{liv}
		Let $X/\Qbb$ be a singular $K3$ surface with Picard rank 20 over $\Qbb$. Denote by $d = |\det \Tbb(X(\Cbb))|$. Let $K = \Qbb(\sqrt{-d})$ and $d_K$ be the discriminant of $K$. Suppose that $d_K \neq 3, 4$, then there exists a newform $f$ of weight 3 and level  \begin{equation*}
			D = \begin{cases}
				-d_K & \text{if } 4\nmid d_K,\\
				-\dfrac{d_K}{4} &\text{if } 4\mid d_K,
			\end{cases}
		\end{equation*}
		such that 
		\begin{equation*}
			L(h^2_{\tr}(X), s) = L(f, s).
		\end{equation*}
	\end{thm}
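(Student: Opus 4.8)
The plan is to leverage Livné's modularity theorem, stated just above, so that the only genuinely new content is arithmetic: identifying the field $K$ and computing the level of the resulting weight-$3$ form. Since $X$ has Picard rank $20$, the transcendental lattice $\Tbb(X(\Cbb))$ has rank $22-20=2$ and $h^2_{\tr}(X)$ is a rank-$2$ motive; by Livné there is a weight-$3$ newform $f$ with $L(h^2_{\tr}(X),s)=L(f,s)$. First I would read off $K$ from the Hodge structure on $T:=\Tbb(X(\Cbb))\otimes\Qbb$. Its line $H^{2,0}$ is spanned by a vector $\omega$ that is isotropic for the intersection form $Q$; writing $\omega=u+iv$ with $u,v\in T\otimes\Rbb$ gives $Q(u)=Q(v)>0$ and $Q(u,v)=0$, and evaluating against a Gram matrix of determinant $\pm d$ shows that $\mathrm{End}_{\mathrm{Hdg}}(T)$ is the imaginary quadratic field $\Qbb(\sqrt{-d})$. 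Thus $K=\Qbb(\sqrt{-d})$, as claimed.

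Next I would upgrade Livné's output to a \emph{CM} newform and exhibit the governing Hecke character. Because $\dim T=2$ and the Hodge structure carries an action of $K$, its Mumford--Tate group is a torus; hence the $\ell$-adic realization $H^2_\ell(X)_{\tr}$, viewed as a $2$-dimensional representation of $\Gal(\bar\Qbb/\Qbb)$, becomes abelian upon restriction to $\Gal(\bar\Qbb/K)$. By the theory of CM modular forms (Hecke, Shimura, Ribet) this exhibits $f$ as having complex multiplication by $K$: there is an algebraic Hecke character $\psi$ of $K$ of infinity type $(2,0)$ with $\rho_f\simeq\Ind_K^\Qbb\psi$ and $L(f,s)=L(\psi,s)$. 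The hypothesis that $\NS(X_{\bar\Qbb})$ is generated by divisors defined over $\Qbb$ is used precisely here: it makes the algebraic part of $H^2_\ell(X)$ a sum of copies of $\Qbb(-1)$ on which Galois acts through the cyclotomic character alone, which forces the Fourier coefficients of $f$ to be rational and its nebentypus to be the quadratic character $\chi_K$ attached to $K/\Qbb$.

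Finally I would compute the level, which is the delicate step and the content of Schütt's Lemma~17. By the Artin conductor--discriminant formula, the conductor of $\rho_f$ --- equivalently the level of $f$ --- is $|d_K|\cdot N_{K/\Qbb}(\mathfrak{f}_\psi)$, where $\mathfrak{f}_\psi$ is the conductor of $\psi$. The main obstacle is therefore to determine $\mathfrak{f}_\psi$ exactly, and to choose $\psi$ within its twist class so as to realize the level recorded in the statement; I would do this by a local analysis at each ramified prime, using the rationality of the coefficients of $f$ and the nebentypus $\chi_K$ to constrain the local components of $\psi$. The dichotomy between $4\mid d_K$ and $4\nmid d_K$ reflects the different tame-versus-wild behavior of $\psi$ at the prime $2$, and the exclusion $|d_K|\notin\{3,4\}$ removes the fields $\Qbb(\sqrt{-3})$ and $\Qbb(i)$, whose extra roots of unity obstruct the existence of a Hecke character of infinity type $(2,0)$ with the expected conductor and would alter the level. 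Carrying out this $2$-adic conductor computation is where I expect the real work to lie.
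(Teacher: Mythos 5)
Your sketch should first be measured against the right target: the paper does not prove this theorem at all --- it is quoted from Schütt \cite{Schu08} --- so what you have written is a reconstruction of the cited argument, and as a skeleton it does match Schütt's actual route. Livné's theorem supplies modularity, the CM field $K=\Qbb(\sqrt{-d})$ is read off from the rank-$2$ transcendental Hodge structure essentially as you say, $f$ corresponds to a Hecke character $\psi$ of infinity type $(2,0)$ with $\rho_f\simeq\Ind_K^{\Qbb}\psi$, and the exclusion $|d_K|\neq 3,4$ is indeed a unit obstruction ($\zeta_6$, resp.\ $i$, squares nontrivially, so $\psi((\alpha))=\alpha^2$ is ill-defined there). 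One misplacement of hypotheses: modularity and the rationality of the coefficients do \emph{not} use the assumption that $\NS(X_{\bar\Qbb})$ is generated by divisors over $\Qbb$ --- the $a_p$ are Frobenius traces on the Galois-stable subspace $H^2_\ell(X)_{\tr}$, hence rational integers for any singular $K3$ over $\Qbb$, as the paper's own statement of Livné's theorem already indicates; the Picard-rank-$20$-over-$\Qbb$ hypothesis is consumed in the lattice-theoretic control of the conductor and level (the content of Schütt's Lemma 17), not where you place it.

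The genuine gap is in your endgame. First, there is no ``twist class'' to choose from: the theorem asserts the exact equality $L(h^2_{\tr}(X),s)=L(f,s)$, so by strong multiplicity one the newform $f$, and hence $\psi$ up to conjugation, is completely determined by $X$; you must compute $\mathfrak{f}_\psi$ for \emph{that} character, not select a convenient representative. Second, and more seriously, your own (correct) conductor formula refutes the plan as described: $\mathrm{cond}(\Ind_K^{\Qbb}\psi)=|d_K|\cdot N_{K/\Qbb}(\mathfrak{f}_\psi)\ge |d_K|$, so no tame-versus-wild analysis at $2$ can ever output a level $-d_K/4<|d_K|$ in the branch $4\mid d_K$ --- for instance $K=\Qbb(\sqrt{-2})$ would require a weight-$3$ newform of level $2$ with odd nebentypus, and no such form exists. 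Note that the dichotomy $-d_K$ versus $-d_K/4$ simply extracts the squarefree integer $m$ with $K=\Qbb(\sqrt{-m})$, so reconciling the stated level with the induced-representation conductor (or recognizing a normalization issue in the transcription) is exactly the work your sketch defers; as written, the step you flag as ``where the real work lies'' would fail rather than merely be laborious. For the application made in the paper this tension is invisible, since there $d=7$, $d_K=-7$, $4\nmid d_K$, and both readings give the level-$7$ form.
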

	
	We then have the following conjecture which is similar to the case of smooth projective curves of genus 1 (see \cite[Conjecture 1.11]{Tri23}).
	
	\begin{conj}[\textbf{Beilinson's conjecture for the transcendental part of motivic cohomology}]\label{242}
		Let $X$ be a singular $K3$ surface with Picard rank $20$ over $\Qbb$.  Let $\xi \in H^{3}_\Mcal(X, \Qbb(4))_{\tr}$ be a nontrivial element and $\gamma$ be a generator of $H_2(X(\Cbb), \Qbb)^-_{\tr}$. We have
		\begin{equation} 
			\dfrac{1}{(2 \pi i)^3} \left<\gamma, \reg^{3, 4}_{X}(\xi)\right>_{X, \tr} = a \cdot L'(f, -1), \quad a \in \Qbb,
		\end{equation}
		where  $f$ is the modular form defined in Theorem \ref{liv}.
	\end{conj}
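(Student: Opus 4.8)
The plan is to deduce Conjecture \ref{242} from the more general Beilinson conjecture for the transcendental part, Conjecture \ref{2.7}, combined with the explicit modularity theorem of Schütt (Theorem \ref{liv}). The key observation is that the left-hand side of the two statements is literally identical: the motivic class $\xi \in H^3_\Mcal(X, \Qbb(4))_{\tr}$, the homology generator $\gamma \in H_2(X(\Cbb), \Qbb)^-_{\tr}$, the regulator $\reg_{X, \tr}$, and the transcendental pairing $\left<\cdot, \cdot\right>_{X, \tr}$ all occur unchanged in both conjectures. Only the right-hand side differs, with $L'(h^2_{\tr}(X), -1)$ replaced by $L'(f, -1)$. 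Since the hypothesis of Conjecture \ref{242} — that $X$ is a singular $K3$ surface of Picard rank $20$ over $\Qbb$ — is stronger than the hypothesis of Conjecture \ref{2.7}, I may apply the latter directly to $X$.

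First I would invoke Conjecture \ref{2.7} for $X$, which yields
\begin{equation*}
    \dfrac{1}{(2\pi i)^3} \left<\gamma, \reg_{X, \tr}(\xi)\right>_{X, \tr} = a \cdot L'(h^2_{\tr}(X), -1), \quad a \in \Qbb^\times.
\end{equation*}
Next, because $X$ has Picard rank $20$ over $\Qbb$, and assuming the discriminant condition $d_K \neq 3, 4$ so that the hypotheses of Theorem \ref{liv} are met, Schütt's theorem produces a newform $f$ of weight $3$ and the prescribed level with $L(h^2_{\tr}(X), s) = L(f, s)$. This is an identity of the same function of $s$, so the same holds for the derivatives; evaluating at $s = -1$ gives $L'(h^2_{\tr}(X), -1) = L'(f, -1)$. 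Substituting this into the displayed identity produces the conclusion with the same constant $a \in \Qbb^\times$, which completes the deduction.

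The hard part is that this argument is only a reduction, not an unconditional proof: the input Conjecture \ref{2.7} is itself an instance of Beilinson's conjecture and remains open, so Conjecture \ref{242} inherits its conjectural status and is no easier. The one genuinely new ingredient — the passage from the transcendental motive $h^2_{\tr}(X)$ to the explicit newform $f$ — is already furnished unconditionally by the modularity results of Livné and Schütt once the discriminant hypothesis holds. The remaining technical caveat is precisely that hypothesis $d_K \neq 3, 4$ in Theorem \ref{liv}: when the transcendental motive has complex multiplication by $\Qbb(i)$ or $\Qbb(\sqrt{-3})$, the associated weight-$3$ form is of CM type attached to a Hecke character and the level formula degenerates, so these two excluded discriminants would require a separate identification of $f$ before the same deduction applies.
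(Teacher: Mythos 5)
Your reduction is correct and is exactly how the paper arrives at this statement: Conjecture \ref{242} is not proved there but is presented as the combination of Conjecture \ref{2.7} with the unconditional modularity identification $L(h^2_{\tr}(X), s) = L(f, s)$ of Livné--Schütt (Theorem \ref{liv}), the equality of derivatives at $s = -1$ following from the analytic continuation supplied by modularity. Your flagging of the excluded discriminants $d_K = 3, 4$ in Theorem \ref{liv} is an appropriate caveat, matching the hypothesis the paper itself carries.
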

    
The following proposition shows that Beilinson's conjecture for the algebraic part of the motivic cohomology of singular $K3$ surfaces reduces to Borel's theorem. 
\begin{prop}\label{alg}
    Let $X/\Qbb$ be a singular $K3$ surface with Picard rank 20 over $\Qbb$. Let $\xi \in H^3_\Mcal(X, \Qbb(4))_{\alg}$ be a nontrivial element and $\gamma \in H_2(X(\Cbb), \Qbb)^-_{\alg}$ be a generator, we have  
	\begin{equation}\label{252}
		\dfrac{1}{(2\pi i)^3} \left<\gamma, \reg^{3, 4}_X(\xi)\right>_{X, \alg} = a \cdot \zeta'(-2), \quad a \in \Qbb.
	\end{equation}
\end{prop}
\begin{proof}
 We fix an orthogonal basis
	$$\{[D_j], 1 \le j \le 20\}$$ of $\NS(X)$,  where $[D_j]$ denotes the algebraic equivalence class corresponding to the divisor $D_j$ defined over $\Qbb$. Then we  have the following direct sum decomposition in the category $\CHM(\Qbb)$ of Chow motive over $\Qbb$ with $\Qbb$-coefficients (see \eqref{halg})
	\begin{equation}\label{246}
		h^2_{\alg}(X) \simeq \bigoplus_{j=1}^{20} h^2_{\alg, j}(X), 
	\end{equation}
	where $h^2_{\alg, j}(X) = (X, p_{2, \alg, j}(X), 0) \simeq \Lbb = (\Spec \Qbb, \id, -1)$, and $p_{2, \alg, j}(X) \in \Corr^0(X, X)$ are projectors corresponding to $D_j$ for $1 \le j\le 20$. Recall that the fully faithful functor $\CHM(\Qbb) \to \DM_{\gm}(\Qbb)$ sends $\Lbb$ to $\Qbb(1)[2]$ (see \cite[Chapter 5, Proposition 2.1.4]{VSF00}). We then have the following direct sum decomposition in $\DM_{\gm}(\Qbb)$
	\begin{equation}
		M_{\alg}^2 (X) \simeq  \bigoplus_{j=1}^{20} M^2_{\alg, i},
	\end{equation}
	where $M^2_{\alg, j}\simeq \Qbb(1)[2]$ is in the category $\DM_\gm(\Qbb)$. Thus 
	\begin{equation}\label{248}
		H^3_\Mcal(X, \Qbb(4))_{\alg} \simeq \bigoplus_{j=1}^{20} H^3_\Mcal(X, \Qbb(4))_{\alg, j}, 
	\end{equation}
	where 
	\begin{equation}\label{250}
		\begin{aligned}
			H^3_\Mcal(X, \Qbb(4))_{\alg, j} &= \Hom_{\DM_{\gm}(\Qbb)}(M^2_{\alg, j}(X), \Qbb(4)[3])\\
			&\simeq \Hom_{\DM_{\gm}(\Qbb)} (\Qbb(1)[2], \Qbb(4)[3])\\
			&\simeq \Hom_{\DM_{\gm}(\Qbb)} (\Qbb(0), \Qbb(3)[1])\\
			&= H^1(\Spec \Qbb, \Qbb(3)).
		\end{aligned}
	\end{equation}
	Let $s_j : D_j \hookrightarrow X$ be the canonical embedding and let $\pi_j : D_j \to \Spec \Qbb$ be the structural morphism. For each $ 1 \le j \le 20$, one has the following commutative diagram 
	\begin{equation}
		\xymatrix{H^1_\Mcal(\Spec \Qbb, \Qbb(3)) \ar[r]^{\pi_j^*} \ar[d]_{\reg^{1, 3}_\Qbb} & H^1_\Mcal(D_j, \Qbb(3)) \ar[r]^{{s_j}_*} \ar[d]_{\reg^{1, 3}_{D_j}} & H^3_\Mcal(X, \Qbb(4))_{\alg} \ar[d]_{\reg^{3, 4}_{X, \alg}}\\
			H^0(\mathrm{pt}, \Rbb(2))^+ \ar[r]& H^0(D_j(\Cbb), \Rbb(2))^+ \ar[r]& H^2(X(\Cbb), \Rbb(3))^+_{\alg},
		}
	\end{equation}
	where $\mathrm{pt}$ denotes a point, and the following sequence is in singular homology
	\begin{equation}
		\xymatrix{
			H_0(\mathrm{pt}, \Qbb)^+ & H_0(D_j(\Cbb), \Qbb)^+ \ar[l]^{\quad {\pi_j}_*} & H_2(X(\Cbb), \Qbb)^-_{\alg} \ar[l]^{s_j^*}}.
	\end{equation}
	One then considers the following pairings
	\begin{equation*}
		\left<\cdot, \cdot\right>_{X, \alg} :
		H_2(X(\Cbb), \Qbb)^-_{\alg} \times H^2(X(\Cbb), \Rbb(3))^+_{\alg}  \to \Rbb(3),
	\end{equation*}
	\begin{equation*}
		\left<\cdot, \cdot\right>_{D_j} :  H_0(D_j(\Cbb), \Qbb)^+ \times H^0(D_j(\Cbb), \Rbb(2))^+ \to \Rbb(2),
	\end{equation*}
	\begin{equation*}
		\left<\cdot, \cdot\right>_{\mathrm{pt}} :  H_0(\mathrm{pt}, \Qbb)^+ \times H^0(\mathrm{pt}, \Rbb(2))^+ \to \Rbb(2).
	\end{equation*}
	Now let $\xi \in H^3_\Mcal(X, \Qbb(4))_{\alg}$ be any nontrivial element. Using the decomposition \eqref{248}, we write 
	$$\xi  = \sum_{j=1}^{20} \xi_j, \quad \xi_j \in H^3_\Mcal(X, \Qbb(4))_{\alg, j}.$$
	Let $\gamma$ be any nontrivial element  in $H_2(X(\Cbb), \Qbb)^-_{\alg}$. By \eqref{250}, there exists $\alpha_j \in H^1_\Mcal(\Spec \Qbb, \Qbb(3))$  for $1 \le j \le 20$ such that
	\begin{equation}
		\left<\gamma, \reg^{3, 4}_{X}(\xi)\right>_{X, \alg} = \sum_{j=1}^{20} \left<\gamma, \reg^{3, 4}_{X}(\xi_j)\right>_{X, \alg} = \sum_{j=1}^{20} \left<\gamma, \reg^{3, 4}_{X} \circ s_{j*} \circ \pi_j^* (\alpha_j)\right>_{X, \alg}.
	\end{equation}
	We have
	\begin{equation}\label{251}
		\left<\gamma, s_{j*} \circ \pi_j^* \circ  \reg^{1, 3}_\Qbb (\alpha_j)\right>_{X, \alg} =  (2 \pi i)\cdot \left<s_j^* (\gamma), \pi_{j}^*  \circ \reg^{1, 3}_\Qbb (\alpha_j)\right>_{D_j} = (2 \pi i)\cdot \left<\pi_{j*} \circ s_j^* (\gamma), \reg^{1, 3}_\Qbb (\alpha_j)\right>_\mathrm{pt},
	\end{equation}
	where the two last equalities follow from the adjunction formula. Therefore, Beilinson's conjecture for the algebraic part of the motivic cohomology then reduces to Borel's theorem (\cite{Bor74}), which states that image of the motivic cohomology $H^1_\Mcal(\Spec \Qbb, \Qbb(3))$ under the Beilinson regulator map is related to the Riemann zeta value $\zeta(3)$ (see e.g., \cite[Section 3.3]{Tri24}). Therefore, we have
	\begin{equation}
		\left<\pi_{j*} \circ s_j^* (\gamma), \reg^{1, 3}_\Qbb (\alpha_j)\right>_\mathrm{pt} = a \cdot  (2\pi i)^2 \cdot \zeta'(-2), \quad a \in \Qbb.
	\end{equation}
	This implies that for any nontrivial element $\xi \in H^3_\Mcal(X, \Qbb(4))_{\alg}$ and $\gamma \in H_2(X(\Cbb), \Qbb)^-_{\alg}$, we have  
	\begin{equation}
		\dfrac{1}{(2\pi i)^3} \left<\gamma, \reg^{3, 4}_X(\xi)\right>_{X, \alg} = a \cdot \zeta'(-2), \quad a \in \Qbb.
	\end{equation}
\end{proof}

We arrive at the following direct corollary about the Mahler measure of a four-variable exact polynomial whose Maillot variety is a singular $K3$ surface with Picard rank 20 over $\Qbb$.
\begin{cor}\label{final}
Let $P (x, y, z, t) \in \Qbb[x, y, z, t]$ be a nonzero polynomial satisfying all the conditions in Theorem \ref{4varsmain}. Let $X$ be a smooth compactification of the Maillot variety $W_P$. Assume that $X$ is a singular $K3$ surface with Picard rank 20 over $\Qbb$. Then under Conjecture \ref{242}, we have
\begin{equation}
\m(P) = \m(\tilde{P}) + a\cdot L'(f, -1) + b \cdot \zeta'(-2), \quad a, b \in \Qbb,
\end{equation}
where $f$ is the newform defined in Theorem \ref{liv}.
\end{cor}

\begin{proof}
  Recall that under the conditions $\Gamma \subset V_P^\reg$ and $\partial \Gamma \subset Y(\Cbb)$, $\partial \Gamma$ defines an element in $H_2(X(\Cbb), \Qbb)^-.$ Using the following decomposition 
  $$H_2(X(\Cbb), \Qbb)^- = H_2(X(\Cbb), \Qbb)^-_{\tr} \oplus H_2(X(\Cbb), \Qbb)^-_{\alg},$$
  we get $[\partial \Gamma] = [\partial \Gamma]_{\tr} + [\partial\Gamma]_{\alg},$ where $[\partial \Gamma]_\epsilon \in H_2(X(\Cbb), \Qbb)^-_\epsilon$ for $\epsilon \in \{\tr, \alg\}$.
  Let $\Lambda$ be the element defined in \eqref{Lambdamain}. Under the conditions in Theorem \ref{4varsmain}, $\Lambda$ defines an element in $H^3_\Mcal(X, \Qbb(4))$ and 
   $$\m(P) - \m (\tilde{P}) = - \dfrac{1}{(2\pi i)^3} \left<[\partial \Gamma], \reg^{3, 4}_X(\Lambda)\right>_X.$$
  Now using the decomposition in Proposition \ref{370},
  $$H^3_\Mcal(X, \Qbb(4)) = H^3_\Mcal(X, \Qbb(4))_{\tr} \oplus H^3_\Mcal(X, \Qbb(4))_{\alg},$$
  we decompose $\Lambda$ into its transcendental and algebraic components
   $\Lambda = \Lambda_{\tr}+\Lambda_{\alg},$
   where $\Lambda_{\epsilon} \in H^3_\Mcal(X, \Qbb(4))_{\epsilon}$ for $\epsilon \in \{\tr, \alg\}$. Since the transcendental lattice is the orthogonal complement of the Néron-Severi lattice in the cohomology group, we have 
   $$\m(P) - \m(\tilde{P}) = -\dfrac{1}{(2 \pi i)^3} \left<[\partial \Gamma]_{\tr}, \reg^{3, 4}_X(\Lambda_{\tr})\right>_{X, \tr} -\dfrac{1}{(2 \pi i)^3} \left<[\partial \Gamma]_{\alg}, \reg^{3, 4}_X(\Lambda_{\alg})\right>_{X, \alg}.$$
   Under Conjecture \ref{242}, we have 
   $$-\dfrac{1}{(2 \pi i)^3} \left<[\partial \Gamma]_{\tr}, \reg^{3, 4}_X(\Lambda_{\tr})\right>_{X, \tr} 
 = a \cdot L'(f, -1), \quad a \in \Qbb,$$
 where $f$ is the new form defined in Theorem \ref{liv}. 
  And by Proposition \ref{alg}, one gets 
  $$-\dfrac{1}{(2 \pi i)^3} \left<[\partial \Gamma]_{\alg}, \reg^{3, 4}_X(\Lambda_{\alg})\right>_{X, \alg} = a \cdot \zeta'(-2), \quad a \in \Qbb.$$
  Therefore, under Conjecture \ref{242}, we have 
  $$\m(P) = m(\tilde{P}) + a \cdot L'(f, -1) + b \cdot \zeta'(-2), \quad a, b \in \Qbb.$$
\end{proof}

	\section{Application}\label{ex4var}

	The main aim of this section is to prove that (under Goncharov's and Beilinson's conjecture) the Mahler measure of the polynomial $P = (x+1)(y+1)(z+1) +t$ is a rational linear combination of a special $L$-value of the unique CM new form of weight 3, level 7 and a Riemann zeta value
	\begin{equation}\label{Fconj}
	    \m(P) = a \cdot L'(f_7, -1) + b \cdot  \zeta'(-2), \quad a, b \in \Qbb.
	\end{equation}
    This identity is numerically conjectured by Brunault that $a = -6$ and $b = -48/7$ (see \cite{Bru23}). Before going into the details of the proof, we present some preliminary results on elliptic modular surfaces, which will be used in Section \ref{application}.

\subsection{Elliptic modular $K3$ surfaces}\label{elliptic modular K3 surface}
In this section, we give a necessary and effective criterion for an elliptic modular surface to be an elliptic 
$K3$ surface. It is by no means a new result, but since we have been unable to find a proof in the literature, we include one here for the reader’s convenience.

Let us recall the definition of elliptic surfaces. Let $C$ be a complex smooth projective curve. An elliptic surface over $C$ is a smooth projective surface $X$ with a surjective morphism $\pi : X \to C$  such that 
	\begin{itemize}
		\item  For all $s \in C$, $\pi^{-1}(s)$  does not contain any curve $E$ satisfying that $E \simeq \Pbb^1$ and $E^2 = -1$.
        
		\item Let $\eta$ be the generic point of $C$. The generic fiber $X_{\eta} := \pi^{-1}(\eta)$ is an elliptic curve over $K := k(C)$.
		
		\item  There exists $s \in C$ such that $\pi^{-1}(s)$ is singular. 
	\end{itemize}

Let $\Hcal = \{\tau \in \Cbb : \Im(\tau) >0\}$ be the upper half-plane. The usual action of $\SL_2(\Zbb)$ on $\Hcal$ is given by 
\begin{equation*}
    \gamma \cdot \tau = \dfrac{a\tau + b}{c\tau + d}, \quad \text{where } \gamma = \begin{bmatrix}
        a & b \\ c & d
     \end{bmatrix} \in \SL_2(\Zbb).
\end{equation*}
For an integer $N \ge 1$, one defines 
    \begin{equation*}
        \Gamma(N)  = \left\{ \gamma \in \SL_2(\Zbb): \gamma \equiv \begin{bmatrix}
     1 & 0\\0 & 1 \end{bmatrix} \mod N \right\}, \quad \Gamma_1(N) = \left\{\gamma \in \SL_2(\Zbb): \gamma \equiv \begin{bmatrix}
     1 & \ast\\0 & 1 \end{bmatrix} \mod N\right\}.
      \end{equation*}
    A congruence subgroup of $\SL_2(\Zbb)$ is a subgroup of $\SL_2(\Zbb)$ containing $\Gamma(N)$ for some integer $N \ge 1$.  The modular curve associated to a congruence subgroup $\Gamma \in \SL_2(\Zbb)$ is defined to be the Riemann surface $\Gamma \setminus \Hcal$, denoted by $C_\Gamma^0$.  This Riemann surface can be compactified by adding the finite set of points $\Gamma \setminus \Pbb^1(\Qbb)$. One denotes by $C_\Gamma$ this compactification and the finite set of points added is called the set of cusps of $C_\Gamma$. Recall that if $\Gamma = \Gamma_1(N)$, then $C^0_\Gamma$ and $C_\Gamma$ are defined over $\Qbb$ (see \cite[Section 6.7]{Shi71}).

We set $\Ecal := \Zbb^2 \setminus (\Hcal \times \Cbb),$ where $\Zbb^2$ acts on $\Hcal \times \Cbb$ as follows
\begin{equation}
    (m, n) \cdot (\tau; z) := (\tau; z + m\tau +n), \quad \text{ for } m, n \in \Zbb, \tau\in \Hcal, z \in \Cbb.
\end{equation}
The space $\Ecal$ is a complex analytic manifold of dimension 2 and we have a holomorphic projection 
$$\Ecal \to \Hcal, \quad (\tau; z) \mapsto \tau,$$
such that for each $\tau \in \Hcal$, the fiber $\Ecal_\tau =  \Zbb^2 \setminus (\tau \times \Cbb) \simeq \Cbb / (\Zbb + \tau \Zbb)$.  The action of $\SL_2(\Zbb)$ on $\Ecal$ is given by
\begin{equation*}
    \gamma \cdot (\tau; z) = \left(\dfrac{a \tau + b}{c \tau + d}; \dfrac{z}{c \tau + d}\right) \quad \text{ for } \gamma = \begin{bmatrix}
        a & b\\c &d
    \end{bmatrix} \in \SL_2(\Zbb).
\end{equation*}
This action is not free, so it is necessary to consider congruence subgroups $\Gamma$ that act freely on $\Hcal$.  One then calls the quotient $\Gamma\setminus \Ecal$ together with the canonical projection $\Gamma\setminus \Ecal \to \Gamma \setminus \Hcal$ the \textit{universal elliptic curve associated to $\Gamma$}. We denote it by $\Ecal_\Gamma$. Recall that universal elliptic curves are algebraic varieties.

 \begin{definition}[\textbf{The elliptic modular surfaces associated to $\Gamma$}]\label{defellipticmodular}
 	Let $\Gamma$ be a congruence subgroup of $\SL_2(\Zbb)$ acting freely on $\Hcal$.  One calls the unique elliptic surface $X_\Gamma \to C_\Gamma$ associated to the universal elliptic curve $\Ecal_\Gamma \to C_\Gamma^0$ the elliptic modular surface associated to $\Gamma$ (see e.g., \cite[Theorem 5.19]{SS19}).
 \end{definition}

 \begin{lm}\label{266}
 	Let $f: X_\Gamma \to C_\Gamma$ be an elliptic modular surface. We have 
 	\begin{equation}\label{2isos}
 		H^0(C_\Gamma, \Omega_{C_\Gamma}^1) \simeq \Scal_2(\Gamma), \quad H^0 (X_\Gamma, \omega_{X_{\Gamma}}) \simeq \Scal_3(\Gamma),
 	\end{equation}
 	where $\Scal_j(\Gamma)$ is the complex vector space of cusp forms of weight $j$ and level $\Gamma$.
 \end{lm}

 \begin{proof}
 	Denote by $\omega_{X_\Gamma/C_\Gamma}$ the relative dualizing sheaf of $f$.  As $X_\Gamma$ is a smooth projective surface and $f$ is a projective flat morphism, we have
 	\begin{equation}\label{canonialsheaf}
 		\omega_{X_\Gamma/C_\Gamma} = \omega_{X_\Gamma} \otimes f^*(\omega_{C_\Gamma}^{-1}),
 	\end{equation} 
 	where $\omega_{X_\Gamma}$ is the canonical sheaf of $X_{\Gamma}$ and $\omega_{C_\Gamma}$ is the canonical sheaf of $C_\Gamma$. Denote by $\omega = f_* (\omega_{X_\Gamma/C_\Gamma})$. From formula \eqref{canonialsheaf}, we have 
 	\begin{equation}\label{canonicalsheafXGamma}
 		\omega_{X_\Gamma} = f^*(\omega \otimes \omega_{C_\Gamma}) = f^*(\omega \otimes \Omega_{C_{\Gamma}}^1).
 	\end{equation}
 	We recall the following natural isomorphism, which is called the \textit{Shimura isomorphism} (see \cite{Shi59})
 	\begin{equation}\label{Shimuraiso}
 		H^0(C_\Gamma, \omega^j \otimes \Omega_{C_\Gamma}^1) \simeq \Scal_{j+1}(\Gamma),
 	\end{equation}
 	for $j \ge 1$. This implies that $H^0(C_\Gamma, \Omega_{C_\Gamma}^1) \simeq \Scal_2(\Gamma)$ and 
 	$H^0 (X_\Gamma, \omega_{X_{\Gamma}}) \simeq H^0(C_\Gamma, \omega \otimes \Omega_{C_\Gamma}^1)\simeq \Scal_3(\Gamma).$
 \end{proof}

\begin{definition}[\textbf{Elliptic $K3$ surfaces}]
    An elliptic $K3$ surface is  an elliptic surface $X \to \Pbb^1$ where $X$ is an algebraic $K3$ surface.
\end{definition}
 
The following result gives us conditions when an elliptic modular surface is an elliptic $K3$ surface. 

\begin{lm}\label{modularK3}
	Let $\Gamma$ be a congruence subgroup of $\SL_2(\Zbb)$ acting freely on the upper half-plane $\Hcal$ and  let $f : X_\Gamma \to C_\Gamma$ be the associated elliptic modular surface. Then $f$ is  an elliptic $K3$ surface if and only if 
	$$\dim \Scal_2(\Gamma) = 0, \text{ and } \dim \Scal_3(\Gamma) = 1,$$ where $\Scal_j(\Gamma)$ is the complex vector space of cusp forms of weight $j$, level $\Gamma$.
\end{lm}

\begin{proof}
	Suppose that $f : X_\Gamma \to C_\Gamma$ is an elliptic $K3$ surface. In particular, we have $C_\Gamma \simeq \Pbb^1$. Then by formula \eqref{2isos}, we have $\dim \Scal_2(\Gamma) = \dim H^0(C_\Gamma, \Omega_{C_\Gamma}^1) = 0.$ As $X_\Gamma$ is an algebraic $K3$ surface, by formula \eqref{2isos}, we also have 
	\begin{equation*}
		\dim \Scal_3(\Gamma) = \dim H^0(X_\Gamma, \omega_{X_\Gamma}) = \dim H^0(X_\Gamma, \Omega_{X_\Gamma}^2) = 1.
	\end{equation*}
	Now we prove the converse. Again by formula \eqref{2isos}, $\dim \Scal_2(\Gamma) = 0$ implies that 
	$H^0(C_\Gamma, \Omega_{C_\Gamma}^1) = 0.$ Therefore, $C_\Gamma$ is a smooth projective curve of genus 0, hence $C_\Gamma \simeq \Pbb^1$.   Let
	$$\omega = f_* (\omega_{X_\Gamma/C_\Gamma}),$$ where $\omega_{X_\Gamma/C_\Gamma}$ is the relative dualizing sheaf of $f$. By definition of elliptic surfaces, $f$ is not isotrivial,  so we can write
	$\omega = \Ocal_{\Pbb^1} (a)$ for some suitable positive integer $a$. The positivity of $a$ follows from the positivity of the direct image of the relative canonical sheaf.
	From the formula \eqref{canonialsheaf}, we then have $$\omega_{X_\Gamma} = f^*(\omega \otimes \omega_{C_\Gamma}) = f^*(\Ocal_{\Pbb^1}(a) \otimes \Omega_{\Pbb^1}^1)= f^*(\Ocal_{\Pbb^1}(a-2)).$$
	By Shimura's isomorphism \eqref{Shimuraiso}, we have
	\begin{equation*}
		1 = \dim \Scal_3(\Gamma) = \dim H^0(C_\Gamma, \omega \otimes \Omega_{C_\Gamma}^1) = \dim H^0 (\Pbb^1, \Ocal_{\Pbb^1}(a-2)).
	\end{equation*}
	Therefore, $a = 2$. Hence $\omega_{X_\Gamma} =  f^*(\Ocal_{\Pbb^1}) = \Ocal_{X_\Gamma}$. We consider Leray spectral sequence 
	\begin{equation*}
		E_2^{p, q} = H^p(C_\Gamma, R^qf_* \Ocal_{X_\Gamma}) \Longrightarrow H^{p+q}(X_\Gamma, \Ocal_{X_\Gamma}),
	\end{equation*}
	which degenerates at $E_2$-term (see e.g., \cite[Section 1.3]{PS08}). Since $R^1 f_* \Ocal_{X_\Gamma} \simeq \omega^\vee  \simeq \Ocal_{\Pbb^1_k}(-2)$, one obtains that  $$Gr^0 H^1 (X_\Gamma, \Ocal_{X_\Gamma}) = E_\infty^{0, 1} = E_2^{0, 1} = H^0(X_\Gamma, R^1 f_* \Ocal_{X_\Gamma}) \simeq H^0 (\Pbb^1_k, \Ocal_{\Pbb^1_k}(-2)) = 0,$$
	and $Gr^2 H^1(X_\Gamma, \Ocal_{X_\Gamma}) = E_\infty^{2, -1} = 0.$
	This implies that 
	$$H^1(X_\Gamma, \Ocal_{X_\Gamma}) = Gr^1 H^1(X_\Gamma, \Ocal_{X_\Gamma}) = E_\infty^{1, 0} = E_2^{1, 0} = H^1(C_\Gamma, f_*\Ocal_{X_\Gamma}).$$
	As $f : X_\Gamma \to C_\Gamma$ is a projective morphism and $C_\Gamma \simeq \Pbb^1$ is normal, we have $f_*\Ocal_{X_\Gamma} = \Ocal_{\Pbb^1}$ (see e.g., \cite[proof of Corollary III.11.4]{Har77}). Therefore, we have 
	$H^1(C_\Gamma, f_*\Ocal_{X_\Gamma})  \simeq H^1(\Pbb^1, \Ocal_{\Pbb^1}) = 0.$
\end{proof}

\begin{ex}[\textbf{The elliptic modular surface associated to $\Gamma_1(7)$}]\label{K3modularsurface7}
Denote by $\Gamma :=  \Gamma_1(7)$. Recall that the modular curves $C_{\Gamma}^0$ and $C_\Gamma$ are algebraic curves defined over $\Qbb$. Denote by $K := \Qbb(t)$ the function field of $C_\Gamma$. We consider the following Weierstrass equation
	\begin{equation}\label{E17}
		E :  V^2 + (1+t-t^2)UV + (t^2 -t^3)V = U^3 + (t^2 -t^3)V^2,
	\end{equation}
	which defines an elliptic curve over $K$ if and only if $t \notin \Sigma := \{0, 1, \infty, \text{ the roots of } t^3 - 8t^2 + 5t +1\}.$  One then can check that $P = (0,0)$ is a point of order $7$. Denote by $\Ecal \subset \Pbb^2_{[U:V]} \times C_\Gamma^0$ the open surface defined by  \eqref{E17}. It can be shown that there is an isomorphism 
	$C_\Gamma^0 \simeq \Pbb^1 (\Cbb) \setminus \Sigma$
	such that the canonical projection  
	$\Ecal \to C_\Gamma^0$ is the universal elliptic curve associated to $\Gamma$. Denote by $\pi : X \to C_\Gamma$ the corresponding elliptic modular surface.
Since $\dim \Scal_2(\Gamma) = 0$ and $\dim \Scal_3(\Gamma) = 1$, by Lemma \ref{modularK3}, $X$ is an algebraic $K3$ surface defined over $\Qbb$. The singular fibers of $\pi$ are of Kodaira types $I_7, I_7, I_7, I_1, I_1, I_1$.  Hence the Picard rank is  \begin{equation*}
		\rho(X_{\bar \Qbb}) = r + 2  
 +\sum_{t \in \Sigma}(m_t - 1) = r + 2 + 6 + 6 + 6 = r + 20,
	\end{equation*}
where $r$ is the rank of the Mordell-Weil group $E(K)$ and $m_t$ is the number of irreducible components of the singular fiber $X_t := \pi^{-1}(t)$. As $X$ is a $K3$ surface, we have $\rho(X_{\bar \Qbb}) \le 20$. This implies that $r = 0$ and  $\rho(X_{\bar \Qbb}) = 20$, therefore, $X$ is a singular $K3$ surface. The Néron-Severi group $\NS(X_{\bar \Qbb})$ has a $\Qbb$-basis given by the divisor classes of
	\begin{equation}\label{270}
		\{\bar O; \ F; \Theta_{s, i}, 1 \le i \le m_s - 1, s \in \Sigma\}, 
	\end{equation}
	where $\bar O$ is the closure of the trivial point $O$ of $E(K)$ in $X$, $F$ is a general fiber, and $\Theta_{s, i}$ for $1 \le i \le m_s-1$ are the irreducible components of the singular fiber $X_s$  which do not intersect with $\bar O$ (see \cite[Proposition 6.3]{SS19}). Recall that, for any point $Q \in E(K)$, its closure in $X$ is a curve isomorphic to $C_\Gamma$, hence it is defined over $\Qbb$. It follows from the Weierstrass equation \eqref{E17} that a general fiber is an elliptic curve defined over $\Qbb$. The singular fiber $I_1$ is irreducible, hence does not contribute any generator to the basis \eqref{270} of $\NS(X_{\bar \Qbb})$. Since the point $P$ intersects each $I_7$ fiber at a different nontrivial component, all fiber irreducible components of each $I_7$ fiber are defined over $\Qbb$. Therefore, $X$ is a singular $K3$ surface with Picard rank $20$ over $\Qbb$. 
\end{ex}

	\subsection{The Mahler measure of the polynomial $P = (x+1)(y+1)(z+1) + t$}\label{application}
In this section, we prove Mahler measure identity \eqref{Fconj} under certain conjectures. In particular, we show that the Maillot variety $W_P$ has a regular model which is isomorphic to the $K3$ surface associated to the congruence subgroup $\Gamma_1(7)$.

Recall that the Maillot variety $W_P$ is the intersection $V_P \cap V_{P^*}$ where $P^* := P(1/x, 1/y, 1/z, 1/t)$. By eliminating $t$, we see that $W_P$ is isomorphic to the surface in $(\Cbb^\times)^3$ given by
	\begin{equation}\label{WP}
		x y z - (x+1)^2 (y+1)^2 (z+1)^2 = 0.
	\end{equation}
     By using the transformation $x \mapsto x$, $y \mapsto -y$, $z \mapsto z+1$, the equation \eqref{WP} becomes
\begin{equation*}
    -xy(z-1) - (x+1)^2 (-y+1)^2 z^2 = 0,
\end{equation*}
and the tangent cone at the singularity $(0, 0, 0)$ is given by $xy - z^2 = 0$. We then have the singular locus of $W_P$ only contains singularities in codimension 2, which are $A_1$-singularities $(-1, 0, 0),$ $ (0, -1, 0)$, and $(0, 0, -1)$. Let $\overline{W_P}$ be the compactification of $W_P$ in $\Pbb^1_{[x:x']} \times \Pbb^1_{[y:y']} \times \Pbb^1_{[z:z']}$. It is given by the following equation 
\begin{equation}\label{WPbar}
    x y z x' y' z' - (x+x')^2 (y+y')^2 (z+z')^2 = 0.
\end{equation}
It has 8 affine charts and each of them is defined by the same equation as \eqref{WP} in the corresponding affine coordinates. Therefore, the singular locus of $\overline{W_P}$ only consists of  finite $A_1$-singularities as well. In particular, $\overline{W_P}$ is regular in codimension 1,  hence normal.  Denote by 
	\begin{equation}\label{X}
		\pi: X \to \overline{W_P}
	\end{equation} the blowing up of $\overline{W_P}$ along  all these singularities. As all the singularities are of type $A_1$, $X$ is nonsingular. Moreover, $X$ is projective since $\overline{W_P}$ is projective. In fact, $f$ is a birational projective morphism (see \cite[Propostition II.7.16]{Har77}). It suffices to look at the blowing-up of the affine chart $W_P$ along its singularities. The proper transform $\widetilde{W_P}$ of $W_P$  is contained in the weighted projective space $\Pbb(0, 0, 0, 1, 1, 1, 1)$ with corresponding coordinates 
	$((x_1, x_2, x_3), (x_4: x_5: x_6: x_7))$. And $\widetilde{W_P}$ is defined over $\Qbb$  by the following equations
	\begin{equation}\label{21eqs}
		\begin{cases}
			x_1 x_2 x_3 - (x_1+1)^2 (x_2+1)^2 (x_3+1)^2 = 0,\\
			\text{20 other equations with $\Qbb$-coefficients given by blowing up } \Abb^3 \text{ along the singular locus} ,
		\end{cases}
	\end{equation}
	together with the projection map
	\begin{equation}\label{blowupmap}
		\tilde \pi: \widetilde{W_P} \to W_P, \quad ((x_1, x_2, x_3), (x_4: x_5: x_6: x_7)) \mapsto (x_1, x_2, x_3).
	\end{equation}
	Then the map $\pi: X\to \overline{W_P}$ is obtained by gluing such blowing up of affine charts. Using Magma, one can easily obtain that the exceptional curve at the singular point $(-1, 0, 0)$ is the smooth curve given by the following equation in the weighted projective space $\Pbb(0, 0, 0, 1, 1, 1, 1)$
	\begin{equation*}
		E_1: \begin{cases}
			x_1 + 1 = 0,\\
			x_2 = 0,\\
			x_3 = 0,\\
			x_6 = 0,\\
			x_4^2 - 2x_4 x_5 + x_5^2 - 2x_4 x_7 + 3 x_5 x_7 + x_7^2 = 0.
		\end{cases}
	\end{equation*}
	Similarly, the exceptional curves above $(0,-1,0)$ and $(0, 0, -1)$ are respectively given by 
	\begin{equation*}
		E_2: \begin{cases}
			x_1 = 0,\\
			x_2 + 1 = 0,\\
			x_3 = 0,\\
			x_5 + x_6 = 0,\\
			x_4 x_6 + x_6^2 + x_6 x_7 - x_7^2 = 0,
		\end{cases} \quad E_3: \begin{cases}
			x_1 = 0,\\
			x_2 = 0,\\
			x_3 + 1 = 0,\\
			x_6 + x_7 = 0\\
			x_5^2 + x_4 x_7 + x_5 x_7 - x_7^2 = 0.
		\end{cases}
	\end{equation*}
	We have the following result. 
	\begin{prop}\label{k3}
		The surface $X$ defined in \eqref{X} is an algebraic $K3$ surface over $\Qbb$.
	\end{prop}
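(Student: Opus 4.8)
The plan is to verify the two defining conditions of a $K3$ surface recalled above, namely that the canonical sheaf satisfies $\omega_{X/\Qbb} \simeq \Ocal_X$ and that $H^1(X, \Ocal_X) = 0$; that $X$ is nonsingular and projective over $\Qbb$ has already been established in the construction of $X$ as the blow-up $f : X \to \overline{W_P}$ along the three $A_1$-singularities. The starting observation is that $\overline{W_P}$ is a divisor of tridegree $(2,2,2)$ in $P := \Pbb^1 \times \Pbb^1 \times \Pbb^1$ defined over $\Qbb$, since each monomial of its defining equation $xyzx'y'z' - (x+x')^2(y+y')^2(z+z')^2$ is homogeneous of degree $2$ in each of the three pairs of variables. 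As a Cartier divisor in the smooth variety $P$, the surface $\overline{W_P}$ is Gorenstein, and it is normal (being regular in codimension $1$, as shown).

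First I would compute $\omega_{\overline{W_P}}$ by adjunction. Since the dualizing sheaf of $P$ is $\omega_P \simeq \Ocal_P(-2,-2,-2)$ and $\overline{W_P}$ has class $\Ocal_P(2,2,2)$, the adjunction formula for the Cartier divisor $\overline{W_P}$ gives
\[
\omega_{\overline{W_P}} \simeq \big(\omega_P \otimes \Ocal_P(2,2,2)\big)\big|_{\overline{W_P}} \simeq \Ocal_{\overline{W_P}}.
\]
Next I would exploit that the singularities of $\overline{W_P}$ are all of type $A_1$, hence are rational double points (Du Val singularities). Two standard consequences are needed. Since Du Val singularities are canonical and their minimal resolution is crepant — concretely, each exceptional curve $E$ satisfies $E^2 = -2$, as noted in the construction, so all discrepancies vanish — one has $\omega_X \simeq f^*\omega_{\overline{W_P}}$. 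Combined with the adjunction computation this yields $\omega_X \simeq f^*\Ocal_{\overline{W_P}} \simeq \Ocal_X$, the first $K3$ condition.

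For the second condition I would use that Du Val singularities are rational. Hence $f_*\Ocal_X = \Ocal_{\overline{W_P}}$ by normality and $R^i f_*\Ocal_X = 0$ for $i > 0$, so the Leray spectral sequence for $f$ degenerates to give $H^1(X, \Ocal_X) \simeq H^1(\overline{W_P}, \Ocal_{\overline{W_P}})$. It then remains to compute the latter group from the structure sequence
\[
0 \to \Ocal_P(-2,-2,-2) \to \Ocal_P \to \Ocal_{\overline{W_P}} \to 0,
\]
whose long exact sequence contains $H^1(P, \Ocal_P) \to H^1(\overline{W_P}, \Ocal_{\overline{W_P}}) \to H^2(P, \Ocal_P(-2,-2,-2))$. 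Here $H^1(P, \Ocal_P) = 0$ because $P$ is a product of projective lines, and $\Ocal_P(-2,-2,-2) \simeq \omega_P$, so Serre duality gives $H^2(P, \omega_P) \simeq H^1(P, \Ocal_P)^\vee = 0$. Therefore $H^1(\overline{W_P}, \Ocal_{\overline{W_P}}) = 0$, whence $H^1(X, \Ocal_X) = 0$.

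The main point requiring care — rather than a genuine obstacle — is the justification that the minimal resolution $f$ is crepant and that the $A_1$-singularities are rational; both rest on the classification of $A_1$ as a rational double point together with the computation $E^2 = -2$ already carried out in the construction of $X$. Since the defining equation, the singularities, and the resolution are all defined over $\Qbb$, and the adjunction, crepancy, and vanishing arguments are insensitive to the base field, the two conditions hold over $\Qbb$ and $X$ is an algebraic $K3$ surface over $\Qbb$.
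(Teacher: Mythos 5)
Your proof is correct and follows essentially the same route as the paper: adjunction for the $(2,2,2)$-divisor $\overline{W_P} \subset \Pbb^1\times\Pbb^1\times\Pbb^1$ (the paper phrases this via the dualizing-complex Lemma \ref{lm:dual_1}, but it is the same computation), crepancy of the minimal resolution of the $A_1$-singularities to get $\omega_X \simeq \Ocal_X$, and rationality of those singularities plus the Leray spectral sequence and the structure sequence of the divisor to get $H^1(X,\Ocal_X)=0$. The only cosmetic difference is that you invoke Serre duality for the vanishing of $H^2(P,\omega_P)$ where the paper uses the K\"unneth formula.
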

	
	\begin{proof}
		Let $\pr_i : \Pbb^1 \times \Pbb^1 \times \Pbb^1 \to \Pbb^1$ be the $i$-th canonical projection.  We have  
		\begin{align*}
			\omega_{\Pbb^1 \times \Pbb^1 \times \Pbb^1} &= \pr_1^*\omega_{\Pbb^1} \otimes \pr_2^* \omega_{\Pbb^1} \otimes \pr_3^* \omega_{\Pbb^1}\\
			&= \pr_1^* \Ocal_{\Pbb^1} (-2)  \otimes \pr_2^* \Ocal_{\Pbb^1} (-2) \otimes \pr_3^* \Ocal_{\Pbb^1} (-2),
		\end{align*}
		where the first equality follows from the formula of canonical sheaf on product schemes (see \cite[Exercise II.8.3]{Har77}). Recall that the dualizing complex of $\Pbb^1 \times \Pbb^1 \times \Pbb^1$ is given by
		\begin{equation*}
			\omega_{\Pbb^1 \times \Pbb^1 \times \Pbb^1}^\bullet  \simeq \left(\pr_1^*\Ocal_{\Pbb^1}(-2)\otimes\pr_2^*\Ocal_{\Pbb^1}(-2)\otimes\pr_3^*\Ocal_{\Pbb^1}(-2)\right)[3].
		\end{equation*} 
		Let us denote by $D$ the Cartier divisor corresponding to $\overline{W_P}$ in $P^1 \times \Pbb^1 \times \Pbb^1$  and $i :  \overline{W_P} \hookrightarrow \Pbb^1 \times \Pbb^1 \times \Pbb^1$ be the canonical embedding. Recall that the invertible sheaf associated to $D$ is given by 
		$$\Ocal_{\Pbb^1
			\times \Pbb^1 \times \Pbb^1}(D) = \pr_1^*\Ocal_{\Pbb^1}(2)\otimes \pr_2^*\Ocal_{\Pbb^1}(2)\otimes \pr_3^*\Ocal_{\Pbb^1}(2).$$
  We then have 
		\begin{align*}
			\omega_{D}^\bullet &\simeq i^*(\omega_{\Pbb^1\times \Pbb^1 \times \Pbb^1}^\bullet) \otimes i^*\Ocal_{\Pbb^1 \times \Pbb^1 \times \Pbb^1}(D)[-1]\\
			&\simeq i^*(\pr_1^* \Ocal_{\Pbb^1} \otimes \pr_2^* \Ocal_{\Pbb^1} \otimes \pr_3^* \Ocal_{\Pbb^1})[2]\\
			&\simeq i^* \Ocal_{\Pbb^1 \times \Pbb^1 \times \Pbb^1}[2]\\
			&\simeq \Ocal_{D}[2],
		\end{align*}
		where the first isomorphism follows from \cite[Lemma 10.2.3]{Tri24}. Hence we have $\omega_D \simeq \Ocal_{D}$.  Therefore, we obtain that
		$$\omega_X \simeq \pi^*\omega_{D} \simeq \pi^*\Ocal_{D} \simeq \Ocal_X,$$
        where the first isomorphism is due to $f : X \to D$ being the blowing up of $D$ along $A_1$-singularities.
	In order to compute $H^1(X, \Ocal_X)$, we consider the Leray spectral sequence
		\begin{equation*}
			E^{p, q}_2 = H^p(D, R^q \pi_* \Ocal_X) \Longrightarrow H^{p+q}(X, \Ocal_X),
		\end{equation*}
		which degenerates at the $E_2$-term (see e.g., \cite[Section 1.3]{PS08}).
		Notice that $R^1\pi_*\Ocal_X = 0$ as $A_1$-singularities are rational singularities. Therefore, we have  $$\mathrm{Gr}^0 H^1 (X, \Ocal_X) = E_\infty^{0,1} = E_2^{0,1} = H^0(D, R^1 \pi_*\Ocal_X) = 0.$$ We also have $\mathrm{Gr}^2 H^1 (X, \Ocal_X) = E_\infty^{2, -1} = 0$. Hence 
		\iffalse
		\begin{equation*}
			H^1 (X, \Ocal_X) = F^1 H^1 (X, \Ocal_X)  \supset F^2 H^1 (X, \Ocal_X) = F^3 H^1 (X, \Ocal_X) = 0. 
		\end{equation*}
		\fi
		$$H^1 (X, \Ocal_X) = \mathrm{Gr}^1 H^1 (X, \Ocal_X) = E_\infty^{1,0} = E_2^{1,0} =  H^1 (D, \pi_*\Ocal_X) = H^1(D, \Ocal_{D}),$$ 
		where the last equality follows from the fact that $\pi_*\Ocal_X \simeq \Ocal_{D}$ as $\pi$ is a birational projective morphism and that $D$ is normal  (see e.g., \cite[proof of Corollary III.11.4]{Har77}). Therefore, it is sufficient to show that $H^1(D, \Ocal_D) = 0$. By Künneth's formula, one can show that $$H^1(\Pbb^1\times \Pbb^1\times \Pbb^1,\Ocal_{\Pbb^1\times \Pbb^1\times \Pbb^1}) = 0, \quad H^2(\Pbb^1\times \Pbb^1\times \Pbb^1, \Ocal_{\Pbb^1\times \Pbb^1\times \Pbb^1}(-D)) = 0.$$ Recall that as $D$ is effective, $\Ocal_X(-D)$ 
		is the sheaf of ideals defining $D$. We then have the following short exact sequence
		$$0\to \Ocal_{\Pbb^1\times \Pbb^1\times \Pbb^1}(-D) \to \Ocal_{\Pbb^1\times \Pbb^1\times \Pbb^1}\to \Ocal_D\to 0.$$ It induces  the following exact sequence
		$$H^1(\Pbb^1\times \Pbb^1\times \Pbb^1,\Ocal_{\Pbb^1\times \Pbb^1\times \Pbb^1})\to H^1(D,\Ocal_D)\to H^2(\Pbb^1\times \Pbb^1\times \Pbb^1,\Ocal_{\Pbb^1\times \Pbb^1\times \Pbb^1}(-D)).$$
		This implies that $H^1(D,\Ocal_D)=0.$
	\end{proof}

	\begin{prop}\label{234}
		The surface $X$ is a singular $K3$ surface with Picard rank 20 over $\Qbb$.
	\end{prop}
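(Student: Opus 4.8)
The plan is to realize $X$ as an elliptic surface over $\Qbb$ and compute its Picard number via the Shioda--Tate formula. Since $X$ is a $K3$ surface, the bound \eqref{259} gives $\rho(X_{\bar\Qbb}) \le 20$, so it suffices to produce $20$ independent classes in $\NS(X_{\bar\Qbb})$, each represented by a divisor defined over $\Qbb$; this will establish simultaneously that $X$ is a singular $K3$ surface and that it has Picard rank $20$ over $\Qbb$. First I would exploit the $\Qbb$-rational model \eqref{WP} to extract an elliptic fibration $\pi : X \to \Pbb^1$ defined over $\Qbb$. A natural candidate is obtained by fixing one coordinate (or a suitable $S_3$-symmetric combination, since the defining equation is symmetric in $x,y,z$) as a base parameter $s$, so that the generic fiber is a curve of genus $1$ over $\Qbb(s)$; after locating a $\Qbb$-rational point one puts it in Weierstrass form over $\Qbb(s)$ and reads off the discriminant $\Delta(s)$.

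Next I would determine the degenerate fibers: factoring $\Delta(s)$ over $\bar\Qbb$ locates the bad base points, and the Tate algorithm identifies the Kodaira type of each singular fiber $X_v$, hence the number $m_v$ of its irreducible components. Running this in parallel I would record the action of $\Gal(\bar\Qbb/\Qbb)$ on the set of fiber components. Because the fibration is defined over $\Qbb$, and because for this surface the relevant base points and the individual components turn out to be $\Qbb$-rational (this is where the concrete computation, which can be carried out in Magma, is required), every fiber component gives a class in $\NS(X_{\bar\Qbb})$ that is fixed by the Galois action, and is in fact the class of a genuine $\Qbb$-divisor. Together with the class of a general fiber and the zero section, these account for $2 + \sum_v (m_v - 1)$ independent $\Qbb$-rational classes.

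Finally I would pin down the Mordell--Weil rank $r$ of $\pi$, producing explicit sections defined over $\Qbb$ and bounding $r$ from above using the inequality $2 + \sum_v(m_v-1) + r \le 20$ forced by \eqref{259}. The Shioda--Tate formula
\[
\rho(X_{\bar\Qbb}) = 2 + \sum_v (m_v - 1) + r
\]
then yields $\rho(X_{\bar\Qbb}) = 20$, so $X$ is a singular $K3$ surface. Since each summand on the right is generated by divisors defined over $\Qbb$ --- a general fiber, the zero section, the $\Qbb$-rational fiber components, and the $\Qbb$-rational sections --- and these $20$ classes are independent, the Galois action on $\NS(X_{\bar\Qbb}) \otimes \Qbb$ is trivial and $\NS(X_{\bar\Qbb})$ is generated by $20$ divisors defined over $\Qbb$; that is, $X$ has Picard rank $20$ over $\Qbb$.

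The main obstacle is the explicit geometry rather than the formal argument: one must find a fibration whose reducible fibers and sections account for exactly the required $18 = \sum_v(m_v-1) + r$, and then verify that all contributing components and sections are honestly $\Qbb$-rational rather than merely $\bar\Qbb$-rational. A priori Galois could permute the components of a reducible fiber, which would lower the rank of the $\Qbb$-rational sublattice and break the claim of Picard rank $20$ \emph{over} $\Qbb$; controlling this Galois action on each fiber --- together with correctly applying the Tate algorithm at every bad place of the base --- is the delicate part. The permutation symmetry of $P$ in $x,y,z$ should help organize both the search for the fibration and the bookkeeping of its singular fibers.
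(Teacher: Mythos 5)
Your proposal is correct in outline, and its computational engine --- an elliptic fibration defined over $\Qbb$, Kodaira types of the singular fibers, the Shioda--Tate formula, the bound $\rho(X_{\bar\Qbb}) \le 20$ of \eqref{259} forcing the Mordell--Weil rank to vanish, and the Galois/$\Qbb$-rationality bookkeeping for fiber components --- is exactly what the paper uses. The difference is how the fibration is obtained. The paper does not search for one ad hoc: it quotes an explicit change of variables (due to Lecacheux) putting $W_P$ in Weierstrass form $V^2 + (1+d-d^2)UV + (d^2-d^3)V = U^3 + (d^2-d^3)U^2$ over $\Qbb(d)$, recognizes this as the universal elliptic curve with a point of order $7$, i.e.\ the elliptic modular surface attached to $\Gamma_1(7)$, and then invokes Proposition \ref{k3} together with minimality of $K3$ surfaces (Remark \ref{minimalK3}) to conclude that $X$ is isomorphic --- not merely birational --- to that modular surface; the Shioda--Tate computation (fibers of type $\mathrm{I_7}$ at $0,1,\infty$ and $\mathrm{I_1}$ at the roots of $t^3-8t^2+5t+1$, giving $\rho = r + 2 + 18 \le 20$, hence $r=0$, with all contributing components rational curves over $\Qbb$) is then carried out once and for all in Example \ref{K3modularsurface7}. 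The modular identification buys two things your route does not: the fibration, its $7$-torsion and its fiber types come essentially for free, and the same identification is reused in Section \ref{conclusion} to compute $|\det \Tbb(X)| = 7$ and pin down the level of the weight-$3$ newform. Conversely, your direct approach avoids needing Lecacheux's substitution, but note a step you gloss over: a fibration extracted from the affine equation \eqref{WP} lives a priori on a birational model of $X$ (the relatively minimal elliptic surface attached to the Weierstrass form of the generic fiber), not on $X$ itself, and Picard numbers are not birational invariants of surfaces, since blow-ups change them. To transfer the Néron--Severi computation to $X$ you need precisely the paper's minimality argument --- birationally equivalent $K3$ surfaces are isomorphic --- so that ingredient is not optional in your route either; with it included, your plan is a sound, if computationally heavier, version of the paper's proof.
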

	
	\begin{proof}
		By \cite{Lec23}, under the following change of variables
		\begin{equation*}
			x = - \dfrac{(dv-1)^2 (du-1)}{d^2 v (v-1)^2 (u-1)^2}, \quad y = du - 1, \quad z= -v,
		\end{equation*}
		with the inverse 
		\begin{equation*}
			u = \dfrac{(y + 1)(y + z + 1)(x + 1)}{(z + 1)(y + 1)(x + 1) + y}, \quad v= -z, \quad d = \dfrac{(z + 1)(y + 1)(x + 1) + y}{(y + z + 1)(x + 1)},
		\end{equation*}
		we obtain that $W_P$ is birational to the following surface
		\begin{equation*}
			(uv-u-v)(du-1)(dv-1) = d(d-1) uv(u-1)(v-1).
		\end{equation*}
		Then, by the following change of variables
		\begin{equation*}
			u = \dfrac{V}{U^2}, \quad v  = - \dfrac{U+d^2-d^3}{V}, \quad d = d,
		\end{equation*}
		with the inverse
		\begin{equation*}
			U = \dfrac{-d(d-1)}{uv-u-v}, \quad V = u \left(\dfrac{d(d-1)}{uv-u-v}\right)^2,
		\end{equation*}
		we obtain the following Weierstrass equation
		\begin{equation*}
			V^2 + (1+d-d^2)UV + (d^2 -d^3)V = U^3 + (d^2 -d^3)U^2,
		\end{equation*}
        which defines the $K3$ surface associated to the congruence subgroup $\Gamma_1(7)$ (see Example \ref{K3modularsurface7})
		Recall that two birational $K3$ surfaces are isomorphic since algebraic $K3$ surfaces are minimal (see e.g., \cite[Chapter VII]{Bea96}). Hence by Proposition \ref{k3},  $X$ is isomorphic to the $K3$ surface associated to $\Gamma_1(7)$. Therefore, $X$ is a singular $K3$ surface with Picard rank 20 over $\Qbb$.
	\end{proof}

	Now let us construct an element in $H^3(\Gamma(X, 4))$, where $\Gamma(X, 4)$ is Goncharov's polylogarithmic complex \eqref{bicomplex4}. We have the following decomposition in $\bigwedge^4 \Qbb(V_P)^\times_\Qbb$
	\begin{equation*}
		x \wedge y \wedge z \wedge t = x \wedge (1+x) \wedge y \wedge z  - y \wedge (1+y) \wedge x \wedge z + z \wedge (1+z) \wedge x \wedge y.
	\end{equation*}
	Denote by $F = \Qbb(W_P)$ the function field of $W_P$. As in Definition \ref{310}, we define the following elements in $\Bcal_2(F) \otimes \bigwedge^2 F_\Qbb^\times$
	\begin{equation}\label{xidecomp}
		\xi  = \{x\}_2 \otimes y \wedge z - \{y\}_2 \otimes x \wedge z + \{z\}_2 \otimes x \wedge y,
	\end{equation}
	and
	\begin{equation*}
		\xi^* = \left\{\frac{1}{x}\right\}_2 \otimes \frac{1}{y} \wedge \frac{1}{z} - \left\{\frac{1}{y}\right\}_2 \otimes \frac{1}{x} \wedge \frac{1}{z} + \left\{\frac{1}{z}\right\}_2 \otimes \frac{1}{x} \wedge \frac{1}{y} = -\xi.
	\end{equation*}
	By Lemma \ref{Hn-1Fn}, $\xi$ defines a class in $H^3(\Gamma(F, 4))$. Let us compute the residues $\partial^{4,3}_p(\xi)$ for all $p\in X^1$ in the bicomplex \eqref{bicomplex4}.
	
	\begin{lm}
		The residues $\partial_p^{4, 3}(\xi)$ vanish for all $p \in X^1$.
	\end{lm}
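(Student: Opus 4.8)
The plan is to show that $\partial_p^{4,3}(\xi)$ can only be nonzero when $p$ lies in the support of $\div(x)+\div(y)+\div(z)$, and then to check the finitely many such curves one by one, exploiting the symmetry of $\xi$. Writing out the residue from \eqref{res4}, I would first record that
$\partial_p^{4,3}(\xi) = \{x(p)\}_2\otimes T_p\{y,z\} - \{y(p)\}_2\otimes T_p\{x,z\} + \{z(p)\}_2\otimes T_p\{x,y\}$.
If $x,y,z$ are all units at $p$, i.e. $\ord_p(x)=\ord_p(y)=\ord_p(z)=0$, every tame symbol equals $1$ and the residue vanishes. Hence it suffices to treat those curves $C\subset X$ along which some coordinate has a zero or a pole.

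Second, I would enumerate these curves using the defining equation $xyz=(1+x)^2(1+y)^2(1+z)^2$ of $W_P$ \eqref{WP}. If, say, $x$ acquires a zero or a pole along $C$, then substituting $x\in\{0,\infty\}$ into the equation forces $(1+y)^2(1+z)^2=0$, so that along $C$ either $y\equiv -1$ or $z\equiv -1$, while the remaining coordinate is a free parameter. This yields the boundary curves, on each of which exactly one coordinate is constant equal to $0$ or $\infty$, one coordinate is constant equal to $-1$, and the third is nonconstant of order $0$. The only curves added by the blow-up $f:X\to\overline{W_P}$ \eqref{X} are the exceptional divisors $E_1,E_2,E_3$ over the $A_1$-points $(-1,0,0),(0,-1,0),(0,0,-1)$; since $f$ contracts each $E_i$ to a point, the functions $x,y,z$ restrict to constants lying in $\{0,-1\}$ on $E_i$.

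Third, I would invoke three elementary facts, in $\Bscr_2$ and in $k(C)^\times_\Qbb$: $\{0\}_2=\{\infty\}_2=0$ by definition of $\Rscr_2$; $\{-1\}_2=0$, since a standard consequence of the five-term relation \eqref{130} is $\{t\}_2+\{t^{-1}\}_2=0$, whence $2\{-1\}_2=0$ in the $\Qbb$-vector space $\Bscr_2$; and every root of unity, in particular $\pm 1$, is trivial in $k(C)^\times_\Qbb$. On each $E_i$ all three factors $\{x(E_i)\}_2,\{y(E_i)\}_2,\{z(E_i)\}_2$ lie in $\{\{0\}_2,\{-1\}_2\}$ and hence vanish, so $\partial_{E_i}^{4,3}(\xi)=0$. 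On a boundary curve $C$ with, say, $x|_C\in\{0,\infty\}$, $y|_C=-1$, and $z$ free, the $\{x\}_2$-term vanishes because $\{x|_C\}_2=0$; the $\{y\}_2$-term vanishes because $\{y|_C\}_2=\{-1\}_2=0$; and the $\{z\}_2$-term vanishes because its tame symbol $T_C\{x,y\}=(y|_C)^{-\ord_C(x)}=(-1)^{-\ord_C(x)}$ is a sign, hence trivial in $k(C)^\times_\Qbb$. Since $\xi$ is invariant up to sign under the symmetric group permuting $x,y,z$ and under the involution $\tau$ \eqref{tau} exchanging $0\leftrightarrow\infty$, it is enough to carry out this computation for one boundary curve and one exceptional divisor.

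The main obstacle is the geometric bookkeeping of the second step: one must be sure that the list of curves carrying a zero or pole of $x,y,z$ is complete on the resolved surface $X$, and that the orders and restrictions are exactly as claimed. In particular one must verify that the blow-up introduces no further horizontal components beyond $E_1,E_2,E_3$, and that the equation $xyz=(1+x)^2(1+y)^2(1+z)^2$ genuinely forces a second coordinate to equal $-1$ whenever the first degenerates; this is precisely what rules out a curve along which two coordinates vary simultaneously, a configuration for which the term-by-term vanishing above would break down. Once the enumeration is pinned down, the residue computations themselves are immediate from the three facts listed.
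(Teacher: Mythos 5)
Your proof is correct in substance but takes a genuinely different route from the paper. The paper's argument is computational: it reduces by the coordinate symmetry to the divisor of $x_1$, then uses Magma to list, chart by chart on the blown-up surface, the explicit curves along which $x_1$ degenerates (these are exactly your two types: boundary curves with one coordinate in $\{0,\infty\}$, a second identically $-1$, the third free, together with exceptional curves of the resolution --- indeed the paper's $L^{(4)}_3$ and $L^{(4)}_4$ are $E_2$ and $E_3$), and then asserts the vanishing of each residue with ``one can see''. You replace the machine enumeration by the observation that the equation $xyz=(1+x)^2(1+y)^2(1+z)^2$ forces a second coordinate to equal $-1$ whenever a first one degenerates, and, more importantly, you make explicit the algebraic mechanism that the paper leaves implicit: $\{0\}_2=\{\infty\}_2=\{-1\}_2=0$ in $\Bscr_2$ (the last via the inversion relation $\{t\}_2+\{t^{-1}\}_2=0$, a consequence of the five-term relation, together with $\Bscr_2$ being a $\Qbb$-vector space), and roots of unity are torsion, hence trivial, in $k(C)^\times_\Qbb$. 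This yields a self-contained, checkable proof; what the paper's approach buys is certainty about the enumeration, which is precisely the step you flag as the main obstacle.

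On that step there is one slip, easily repaired. Since $\overline{W_P}$ is cut out by the same equation in all eight affine charts of $\Pbb^1\times\Pbb^1\times\Pbb^1$, its singular locus consists not of three but of twelve $A_1$-points: all points with one coordinate equal to $-1$ and the other two in $\{0,\infty\}$ (e.g.\ $(\infty,-1,0)$ and $(-1,\infty,\infty)$ are singular). Hence $f:X\to\overline{W_P}$ has twelve exceptional curves, not only $E_1,E_2,E_3$; note that your symmetry group generated by permutations and the simultaneous inversion $\tau$ does not move the three affine exceptional curves onto the ``mixed'' ones such as the curve over $(\infty,-1,0)$, so you cannot dispose of them by symmetry alone. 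Fortunately your direct argument covers them verbatim: on every exceptional curve the functions $x,y,z$ restrict to constants lying in $\{0,-1,\infty\}$, so all three factors $\{x(p)\}_2$, $\{y(p)\}_2$, $\{z(p)\}_2$ vanish and the residue is zero. With this correction the enumeration is complete --- every prime divisor of $X$ is either exceptional or the strict transform of a curve in $\overline{W_P}$, and your analysis of $\{x\in\{0,\infty\}\}\cap\overline{W_P}$ accounts for all strict transforms on which some coordinate degenerates --- and the proof goes through.
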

	\begin{proof}
		Recall that locally $X$ is given by equations \eqref{21eqs} in the weighted projective $\Pbb(0, 0, 0, 1, 1, 1, 1)$ with the projection map 
		$$\pi: X \to \overline{W_P}, \quad ((x_1, x_2, x_3), (x_4 : x_5 : x_6: x_7)) \mapsto (x_1, x_2, x_3).$$ Recall that the residue map 
		\begin{equation*}
			\partial^{4,3} : B_2(F) \otimes \bigwedge^2 F^\times_\Qbb \to \bigoplus_{p \in X^1} B_2 (\Qbb(p)) \otimes \Qbb(p)^\times_\Qbb,
		\end{equation*}
		is defined by
		\begin{equation*}
			\{f\}_2 \otimes g \wedge h 
			\mapsto \left(\{f(p)\}_2 \otimes T_p\{g, h\}\right)_{p \in X^1},\quad \text{for } f, g, h \in F^\times, 
		\end{equation*}
        where the Tame symbol at $p \in X^1$  is given by \begin{equation*}
		T_p\{f, g\} := (-1)^{\ord_p(f) \ord_p(g)}\left(\dfrac{f^{\ord_p(g)}}{g^{\ord_p(f)}}\right)(p).
	\end{equation*}
	As the decomposition \eqref{xidecomp} of $\xi$, the residues of $\xi$ vanish outside the set of zeros and poles of $x_1, x_2, x_3$ on $X$. By the symmetry of $W_P$ in variables $x_1, x_2, x_3$, it suffices to show that the residues of $\xi$ at the zeros and poles of $x_1$ all vanish. By using Gröbner basis, we find that the zeros of $x_1$ in the affine chart $x_j \neq 0$  for $j = 2, \dots, 7$ are the curves $L_i^{(j)}$ given as follows:
    
		\begin{align*}
        &L^{(2)}_1 : \begin{cases}
            x_1 = 0,\\
            x_3 + 1 = 0,\\
            2x_4 - x_7 = 0,\\
            x_5 = 0,\\
            2x_6 + x_7 = 0,\\
        \end{cases}
        & &L^{(3)}_1 : \begin{cases}
            x_1 = 0,\\
            x_2 + 1 = 0,\\
            x_4 + x_6 = 0,\\
            x_5 + 2x_6 = 0\\
            x_7 = 0,\\
        \end{cases}\\
			&L^{(4)}_1 : \begin{cases}
				x_1 = 0,\\
				x_2 - x_7 + 1 = 0,\\
				x_3 + 1 = 0,\\
				x_5 = 0,\\
				x_6 + 1 = 0, 
			\end{cases}
			& &L^{(4)}_2 : \begin{cases}
				x_1 = 0,\\
				x_2 + 1 = 0,\\
				x_3 - x_5 + 1 = 0, \\
				x_6 + 1 = 0,\\
				x_7 = 0,
			\end{cases}\\
			&L^{(4)}_3 : \begin{cases}
				x_1 = 0,\\
				x_2 + 1 = 0, \\
				x_3 = 0,\\
				x_5 + x_6 = 0, \\
				x_6^2 + x_6x7 + x_6 - x_7^2 = 0,
			\end{cases}
			& &L^{(4)}_4 : \begin{cases}
				x_1 = 0,\\
				x_2 = 0,\\
				x_3 + 1 = 0,\\
				x_5^2 + x_5x_7 - x_7^2 + x_7 = 0,\\
				x_6 + x_7 = 0,
			\end{cases}
		\end{align*}
		\begin{align*}
			L^{(5)}_1 : \begin{cases}
				x_1 = 0,\\
				x_2 + 1 = 0,\\
				x_3 x_6 + x_6 + 1 = 0,\\
				x_4 + x_6 = 0,\\
				x_7 = 0
			\end{cases} 
			\ L^{(5)}_2 : \begin{cases}
				x_1 = 0,\\
				x_2 + 1 = 0,\\
				x_3 = 0,\\
				x_4 + x_7^2 + x_7 - 1 = 0,\\
				x_6 + 1 = 0,\\
			\end{cases}
			\ L^{(5)}_3 : \begin{cases}
				x_1 = 0,\\
				x_2 = 0,\\
				x_3 + 1 = 0,\\
				x_4 x_7 - x_7^2 + x_7 + 1 = 0,\\
				x_6 + x_7 = 0,
			\end{cases}
		\end{align*}
		\begin{align*}
			&L^{(6)}_1 : \begin{cases}
				x_1=0,\\
				x_2 + x_7 + 1=0,\\
				x_3 + 1=0,\\
				x_4 + 1=0,\\
				x_5=0,
			\end{cases}
			& &L^{(6)}_2 :\begin{cases}
				x_1=0,\\
				x_2 + 1=0,\\
				x_3 + x_5 + 1 =0,\\
				x_4 + 1 = 0,\\
				x_7 = 0,
			\end{cases}\\
			&L^{(6)}_3 : \begin{cases}
				x_1 = 0,\\
				x_2 + 1 = 0,\\
				x_3= 0,\\
				x_4 - x_7^2 + x_7 + 1 = 0,\\
				x_5 + 1= 0,
			\end{cases} 
			& &L^{(6)}_4 : \begin{cases}
				x_1 = 0,\\
				x_2 = 0,\\
				x_3 + 1 = 0,\\
				x_4 - x_5^2 + x_5 + 1 = 0,\\
				x_7 + 1  = 0,
			\end{cases}
		\end{align*}
		\begin{align*}
			L^{(7)}_1 : \begin{cases}
				x_1=0,\\
				x_2x_6 + x_6 + 1 = 0,\\
				x_3 + 1 = 0,\\
				x_4 + x_6= 0,\\
				x_5= 0,
			\end{cases} 
			\ L^{(7)}_2 : \begin{cases}
				x_1=0,\\
				x_2 + 1=0,\\
				x_3=0,\\
				x_4x_6 + x_6^2 + x_6 - 1 = 0,\\
				x_5 + x_6 = 0
			\end{cases}
			\ L^{(7)}_3 : \begin{cases}
				x_1 = 0,\\
				x_2 = 0,\\
				x_3 + 1=0,\\
				x_4 + x_5^2 + x_5 - 1=0,\\
				x_6 + 1 = 0.
			\end{cases}
		\end{align*}
		One can see that $\partial_{L^{(j)}_i}^{4,3}(\xi) = 0$ for all $i, j$. For example, let us compute the residue of $\xi$ at $L^{(2)}_1$. We have
        \begin{equation*}
            \begin{aligned}
                \partial_{L_1^{(2)}}^{4, 3}(\xi) &= \{x_1(L^{(2)}_1)\}_2 \otimes T_{L^{(2)}_1}\{x_2, x_3\} - \{x_2(L^{(2)}_1)\}_2 \otimes T_{L^{(2)}_1}\{x_1, x_3\} + \{x_3(L^{(2)}_1)\}_2 \otimes T_{L^{(2)}_1}\{x_1, x_2\}\\
                &= \{0\}_2 \otimes T_{L^{(2)}_1}\{x_2, x_3\} - \{x_2(L^{(2)}_1)\}_2 \otimes 1 + \{-1\}_2 \otimes T_{L^{(2)}_1}\{x_1, x_2\}\\
                &= 0
            \end{aligned}
        \end{equation*}
        since $\{0\}_2 = \{-1\}_2 = 0$ and 
        $$T_{L^{(2)}_1}\{x_1, x_3\} = (-1)^{\ord_{L^{(2)}_1}(x_1)\ \ord_{L^{(2)}_1}(x_3)} \ \dfrac{x_1^{\ord_{L^{(2)}_1}(x_3)}}{x_3^{\ord_{L^{(2)}_1}(x_1)}}(L^{(2)}_1) = (-1)^0\  \dfrac{x_1^0}{x_3^2}(L^{(2)}_1) = (-1)^2 = 1.$$
        Similarly, the residues of $\xi$ at the remaining curves $L^{(j)}_i$ are all trivial. By looking at equation \eqref{WPbar} of the compactification $\overline{W_P}$ in $\Pbb^1 \times \Pbb^1 \times \Pbb^1$, one can see that the poles of $x_1$ corresponds to the zeros of $x_1'$, which are obtained analogously and have the same form as the zeros of $x_1$. Hence the residues of $\xi$ at the poles of $x_1$ are also trivial. 
	\end{proof}

Now let us consider the Deninger chain. Recall that it is defined by
	\begin{equation*}
		\Gamma = V_P \cap  \{|x| = |y| = |z| = 1, |t|\ge 1\},
	\end{equation*}
    where $V_P = \{(x, y, z, t)\in (\Cbb^\times)^4 \ | \ P(x, y, z, t) = 0\}$. Notice that $V_P$ is nonsingular, hence the condition that $\Gamma \subset V_P^\reg$ is obviously satisfied. The boundary of $\Gamma$ is given by 
	\begin{equation*}
		\partial \Gamma = \{|x| = |y| = |z| =  |(x+1)(y+1)(z+1)| = 1\}.
	\end{equation*}
	Therefore, the boundary $\partial \Gamma$ does not contain any zero or pole of the functions $x, y, z, 1-x, 1-y, 1-z$.  And $\partial \Gamma$ does not contain any singular point of $W_P$. Then by Theorem \ref{7.2.8}, $\partial \Gamma$ defines an element in $H_2(X(\Cbb), \Qbb)^-$.  Using Maple, one can describe the boundary of the Deninger chain in polar coordinates $x = e^{it}, y = e^{is}, z = e^{ir}$ for $t, s, r \in [-\pi, \pi]$ as in Figure \ref{fig14.1}.
	\begin{figure}[h!]
		\centering
		\includegraphics[scale=0.4]{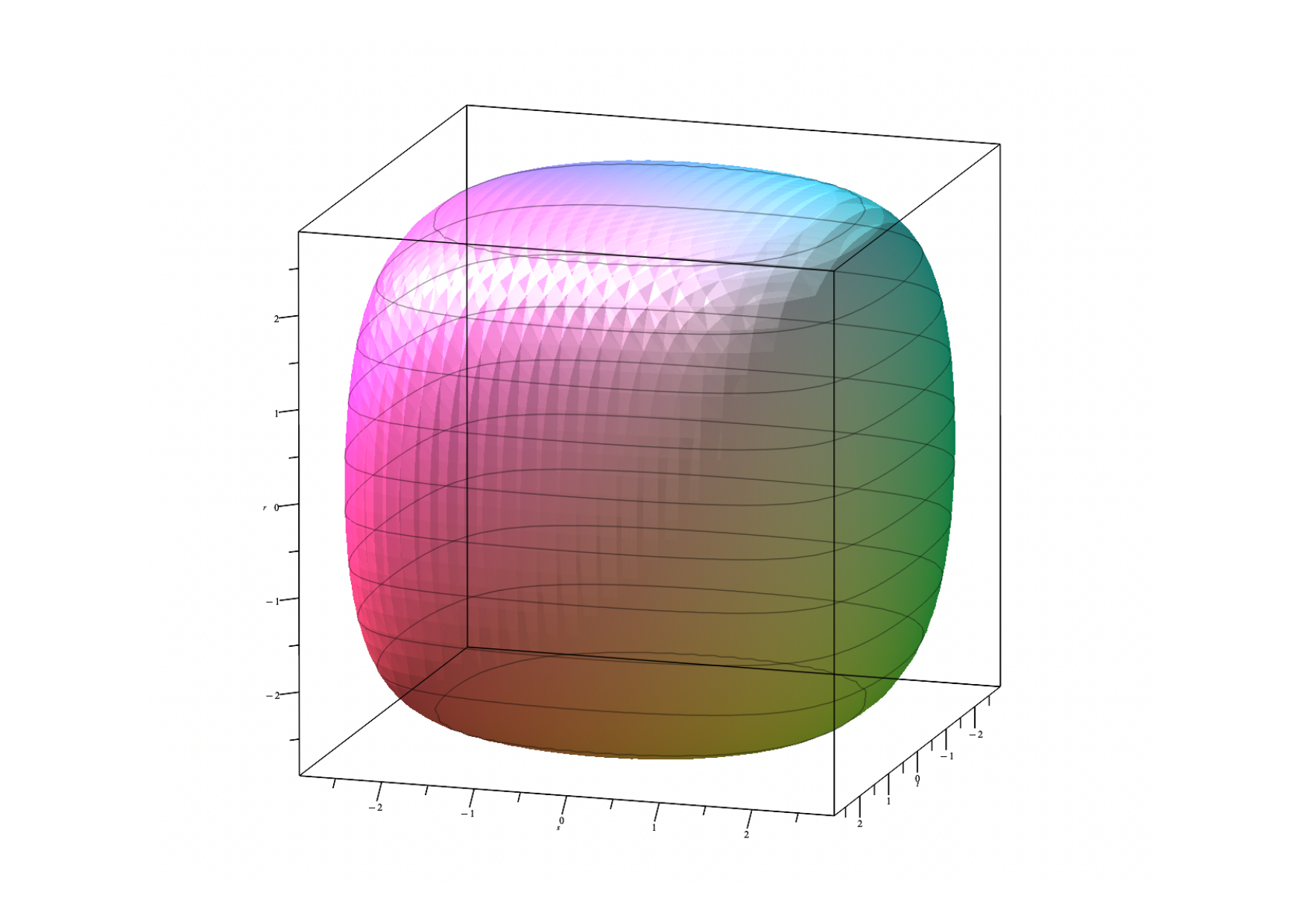}
		\caption{ The boundary of the Deninger chain.}
		\label{fig14.1}
	\end{figure}

Recall that $X$ is isomorphic to the elliptic $K3$ surface associated to $\Gamma_1(7)$ (see the proof of Proposition \ref{234}). In particular, $X$ has a canonical fibration $\pi: X \to \Pbb^1$ whose generic fiber is the following elliptic curve defined over $K = \Qbb(d)$
    \begin{equation*}
        E :  V^2 + (1+d-d^2)UV + (d^2 -d^3)V = U^3 + (d^2 -d^3)U^2.
    \end{equation*}
The singular fibers of $\pi$ are of Kodaira types $I_7, I_7, I_7, I_1, I_1, I_1$  and $E(K) \simeq \Zbb/7 \Zbb$. We then  have 
	\begin{equation*}
		|\det \Tbb(X)| =  \dfrac{\prod_{s \in \Sigma} m^{(1)}_s}{|E(K)_{tor}|^2} = \dfrac{7\cdot 7 \cdot 7 \cdot 1 \cdot 1 \cdot 1}{7^2} = 7,
	\end{equation*}
	where $m^{(1)}_s$ is the number of simple components in the singular fiber $X_s$ (see e.g., \cite[Lemma 1.3]{SI10}). By Theorem \ref{liv}, we have 
    $$L(h^2(X)_{\tr}, s) = L(f_7,, s),$$
     where $f_7$ is the unique modular form of weight 3 and level 7.
Hence, by Corollary \ref{final}, we obtain the following identity, under Goncharov's Conjecture \ref{15.1.3} and Beilinson's Conjecture  \ref{242} \begin{equation}\label{256}
		\m(P) = a \cdot L'(f_7, -1) + b \cdot \zeta'(-2), \quad a, b \in \Qbb.
	\end{equation}

\newpage
\fontsize{10pt}{10pt}\selectfont
\printbibliography

@Book{Bea96,
 Author = {Beauville, Arnaud},
 Title = {Complex algebraic surfaces.},
 Edition = {2nd ed.},
 FSeries = {London Mathematical Society Student Texts},
 Series = {Lond. Math. Soc. Stud. Texts},
 ISSN = {0963-1631},
 Volume = {34},
 ISBN = {0-521-49842-2; 0-521-49510-5},
 Year = {1996},
 Publisher = {Cambridge: Cambridge Univ. Press},
 Language = {English},
 Keywords = {14J25,32J15,14-02,14J10,32-02},
 zbMATH = {916043},
 Zbl = {0849.14014}
}

@article {Bor74,
    AUTHOR = {Borel, Armand},
     TITLE = {Stable real cohomology of arithmetic groups},
   JOURNAL = {Ann. Sci. \'Ecole Norm. Sup. (4)},
  FJOURNAL = {Annales Scientifiques de l'\'Ecole Normale Sup\'erieure.
              Quatri\`eme S\'erie},
    VOLUME = {7},
      YEAR = {1974},
     PAGES = {235--272},
      ISSN = {0012-9593},
   MRCLASS = {22E40 (20G10)},
  MRNUMBER = {387496},
MRREVIEWER = {H.\ Garland},
       URL = {},
}

@article {Boy06,
    AUTHOR = {Boyd, David W.},
     TITLE = {Conjectural explicit formulas for the Mahler measure of some three variable polynomials},
   JOURNAL = {},
  FJOURNAL = {},
    VOLUME = {},
    NOTE = {Unpublished notes},
      YEAR = {2006},
    NUMBER = {},
     PAGES = {},
      ISSN = {},
   MRCLASS = {},
  MRNUMBER = {},
MRREVIEWER = {},
       URL = {},
}

@article {Bur97,
    AUTHOR = {Burgos, Jose Ignacio},
     TITLE = {Arithmetic {C}how rings and {D}eligne-{B}eilinson cohomology},
   JOURNAL = {J. Algebraic Geom.},
  FJOURNAL = {Journal of Algebraic Geometry},
    VOLUME = {6},
      YEAR = {1997},
    NUMBER = {2},
     PAGES = {335--377},
      ISSN = {1056-3911,1534-7486},
   MRCLASS = {14G40 (14C15 14F99)},
  MRNUMBER = {1489119},
MRREVIEWER = {J\"org\ Jahnel},
}

@Article{Bru20,
 Author = {Brunault, Fran{\c{c}}ois},
 Title = {Unpublished list of conjectural identities for 3-variable Mahler measures},
 FJournal = {},
 Journal = {},
 ISSN = {},
 Volume = {},
 Pages = {},
 Year = {2020},
 Language = {},
 DOI = {},
 Keywords = {},
 zbMATH = {},
 Zbl = {}
}

@article {Bru23,
    AUTHOR = {Brunault, Fran{\c{c}}ois},
     TITLE = {On the Mahler measure of $(x+1)(y+1)+z$},
   JOURNAL = {},
  FJOURNAL = {},
    VOLUME = {},
    NOTE ={\href{https://doi.org/10.48550/arXiv.2305.02992}{arXiv:2305.02992}},
      YEAR = {2023},
    NUMBER = {},
     PAGES = {},
      ISSN = {},
   MRCLASS = {},
  MRNUMBER = {},
MRREVIEWER = {},
}

@Book{BZ20,
 Author = {Brunault, Fran{\c{c}}ois and Zudilin, Wadim},
 Title = {Many variations of {Mahler} measures. {A} lasting symphony},
 FSeries = {Australian Mathematical Society Lecture Series},
 Series = {Aust. Math. Soc. Lect. Ser.},
 Volume = {28},
 ISBN = {978-1-108-79445-9},
 Year = {2020},
 Publisher = {Cambridge: Cambridge University Press},
 DOI = {10.1017/9781108885553},
 Keywords = {11-01,11R09,11F67,11G50},
 zbMATH = {7169774},
 Zbl = {1476.11002}
}

@Article{dJ00,
 Author = {de Jeu, Rob},
 Title = {Towards regulator formulae for the {{\(K\)}}-theory of curves over number fields},
 FJournal = {Compositio Mathematica},
 Journal = {Compos. Math.},
 ISSN = {0010-437X},
 Volume = {124},
 Number = {2},
 Pages = {137--194},
 Year = {2000},
 DOI = {10.1023/A:1026440915009},
 Keywords = {19F27,11G30,19D45,19E08},
 zbMATH = {1563615},
 Zbl = {0985.19002}
}

@Article{Den97,
 Author = {Deninger, Christopher},
 Title = {Deligne periods of mixed motives, {{\(K\)}}-theory and the entropy of certain {{\(\mathbb Z^n\)}}-actions},
 FJournal = {Journal of the American Mathematical Society},
 Journal = {J. Am. Math. Soc.},
 ISSN = {0894-0347},
 Volume = {10},
 Number = {2},
 Pages = {259--281},
 Year = {1997},
  
 DOI = {10.1090/S0894-0347-97-00228-2},
 Keywords = {11G40,19E08,28D20,19F27},
 zbMATH = {1007809},
 Zbl = {0913.11027}
}

@Article{Gon95,
 Author = {Goncharov, Alexander B.},
 Title = {Geometry of configurations, polylogarithms, and motivic cohomology},
 FJournal = {Advances in Mathematics},
 Journal = {Adv. Math.},
 ISSN = {0001-8708},
 Volume = {114},
 Number = {2},
 Pages = {197--318},
 Year = {1995},
  
 DOI = {10.1006/aima.1995.1045},
 Keywords = {19F27,14C35,14F20,11R42,11R70,33E20},
 URL = {},
 zbMATH = {814488},
 Zbl = {0863.19004}
}

@InCollection{Gon98,
 Author = {Goncharov, Alexander B.},
 Title = {Explicit regulator maps on polylogarithmic motivic complexes},
 BookTitle = {Motives, polylogarithms and Hodge theory. Part I: Motives and polylogarithms. Papers from the International Press conference, Irvine, CA, USA, June 1998},
 ISBN = {1-57146-090-X},
 Pages = {245--276},
 Year = {1998},
 Publisher = {Somerville, MA: International Press},
  
 Keywords = {14F42,14C35,11R42,11G55,19E15,19F27},
 zbMATH = {2078168},
 Zbl = {1049.14012}
}

@article {GR22,
    AUTHOR = {Goncharov, Alexander B. and Rudenko, Daniil},
     TITLE = {Motivic correlators, cluster varieties and Zagier’s conjecture on $\zeta_F (4)$},
   JOURNAL = {},
  FJOURNAL = {},
    VOLUME = {},
      NOTE = {\href{https://arxiv.org/pdf/1803.08585}{https://arxiv.org/pdf/1803.08585}},
      YEAR = {2022},
    NUMBER = {},
     PAGES = {},
      ISSN = {},
   MRCLASS = {},
  MRNUMBER = {},
MRREVIEWER = {},
}

@book {Har77,
    AUTHOR = {Hartshorne, Robin},
     TITLE = {Algebraic geometry},
    SERIES = {Graduate Texts in Mathematics},
    VOLUME = {No. 52},
 PUBLISHER = {Springer-Verlag, New York-Heidelberg},
      YEAR = {1977},
     PAGES = {xvi+496},
      ISBN = {0-387-90244-9},
   MRCLASS = {14-01},
  MRNUMBER = {463157},
MRREVIEWER = {Robert\ Speiser},
}

@book {Huy15,
    AUTHOR = {Huybrechts, Daniel},
     TITLE = {Lectures on {K}3 surfaces},
    SERIES = {Cambridge Studies in Advanced Mathematics},
    VOLUME = {158},
 PUBLISHER = {Cambridge University Press, Cambridge},
      YEAR = {2016},
     PAGES = {xi+485},
      ISBN = {978-1-107-15304-2},
   MRCLASS = {14J28},
  MRNUMBER = {3586372},
MRREVIEWER = {Shigeyuki\ Kondo},
       DOI = {10.1017/CBO9781316594193},
       URL = {},
}

@InCollection{KMP13,
 Author = {Kahn, Bruno and Murre, Jacob P. and Pedrini, Claudio},
 Title = {On the transcendental part of the motive of a surface},
 BookTitle = {Algebraic cycles and motives. Volume 2. Selected papers of the EAGER conference, Leiden, Netherlands, August 30--September 3, 2004 on the occasion of the 75th birthday of Professor J. P. Murre},
 ISBN = {978-0-521-70175-4},
 Pages = {143--202},
 Year = {2007},
 Publisher = {Cambridge: Cambridge University Press},
  
 Keywords = {14C25,14F42,14C30,14J99,18E30,19E15},
 zbMATH = {5194806},
 Zbl = {1130.14008}
}

@book {Kah20,
    AUTHOR = {Kahn, Bruno},
     TITLE = {Zeta and {$L$}-functions of varieties and motives},
    SERIES = {London Mathematical Society Lecture Note Series},
    VOLUME = {462},
      NOTE = {Translated from the 2018 French original [3839285]},
 PUBLISHER = {Cambridge University Press, Cambridge},
      YEAR = {2020},
     PAGES = {vii+207},
      ISBN = {978-1-108-70339-0},
   MRCLASS = {11G40 (11-02 14C15 14G10)},
  MRNUMBER = {4382436},
       DOI = {10.1017/9781108691536},
       URL = {},
}

@article {Lal15,
    AUTHOR = {Lal\'in, Matilde},
     TITLE = {Mahler measure and elliptic curve {$L$}-functions at {$s=3$}},
   JOURNAL = {J. Reine Angew. Math.},
  FJOURNAL = {Journal f\"ur die Reine und Angewandte Mathematik. [Crelle's
              Journal]},
    VOLUME = {709},
      YEAR = {2015},
     PAGES = {201--218},
      ISSN = {0075-4102,1435-5345},
   MRCLASS = {11R06 (14G10)},
  MRNUMBER = {3430879},
MRREVIEWER = {Lei\ Yang},
       DOI = {10.1515/crelle-2013-0107},
       URL = {},
}

@article {Lec23,
    AUTHOR = {Lecacheux, Odile},
     TITLE = {},
   JOURNAL = {},
  FJOURNAL = {},
    NOTE   = {personal communication},
    VOLUME = {},
      YEAR = {2023},
    NUMBER = {},
     PAGES = {},
      ISSN = {},
   MRCLASS = {},
  MRNUMBER = {},
MRREVIEWER = {},
       DOI = {},
       URL = {},
}

@article {Liv94,
    AUTHOR = {Livn\'e, Ron},
     TITLE = {Motivic orthogonal two-dimensional representations of {${\rm
              Gal}(\overline {\bf Q}/\bold Q)$}},
   JOURNAL = {Israel J. Math.},
  FJOURNAL = {Israel Journal of Mathematics},
    VOLUME = {92},
      YEAR = {1995},
    NUMBER = {1-3},
     PAGES = {149--156},
      ISSN = {0021-2172,1565-8511},
   MRCLASS = {11G40 (11L05 11R32 14F30 14G10 14J28)},
  MRNUMBER = {1357749},
MRREVIEWER = {Ulf\ Persson},
       DOI = {10.1007/BF02762074},
       URL = {},
}

@article {Mah62,
    AUTHOR = {Mahler, Kurt},
     TITLE = {On some inequalities for polynomials in several variables},
   JOURNAL = {J. London Math. Soc.},
  FJOURNAL = {The Journal of the London Mathematical Society},
    VOLUME = {37},
      YEAR = {1962},
     PAGES = {341--344},
      ISSN = {0024-6107,1469-7750},
   MRCLASS = {10.30 (32.05)},
  MRNUMBER = {138593},
MRREVIEWER = {L.\ Mirsky},
       DOI = {10.1112/jlms/s1-37.1.341},
       URL = {},
}

@article {Mai03,
    AUTHOR = {Maillot, Vincent},
     TITLE = {Mahler measure in Arakelov geometry},
   JOURNAL = {},
  FJOURNAL = {},
    VOLUME = {},
    NOTE   = {Workshop lecture at ``The many aspects of Mahler's measure'', Banff International Research Station},
      YEAR = {2003},
     PAGES = {},
      ISSN = {},
   MRCLASS = {},
  MRNUMBER = {},
MRREVIEWER = {},
       DOI = {},
       URL = {},
}

@incollection {Nek13,
    AUTHOR = {Nekov\'a\v r, Jan},
     TITLE = {Be\u ilinson's conjectures},
 BOOKTITLE = {Motives ({S}eattle, {WA}, 1991)},
    SERIES = {Proc. Sympos. Pure Math.},
    VOLUME = {55, Part 1},
     PAGES = {537--570},
 PUBLISHER = {Amer. Math. Soc., Providence, RI},
      YEAR = {1994},
      ISBN = {0-8218-1636-5},
   MRCLASS = {11G09 (11F67 11G40 14D07 14G10 19F27)},
  MRNUMBER = {1265544},
MRREVIEWER = {Alexey\ A.\ Panchishkin},
       DOI = {10.1090/pspum/055.1/1265544},
       URL = {},
}

@Book{PS08,
 Author = {Peters, Chris A. M. and Steenbrink, Joseph H. M.},
 Title = {Mixed {Hodge} structures},
 FSeries = {Ergebnisse der Mathematik und ihrer Grenzgebiete. 3. Folge},
 Series = {Ergeb. Math. Grenzgeb., 3. Folge},
 ISSN = {0071-1136},
 Volume = {52},
 ISBN = {978-3-540-77015-2},
 Year = {2008},
 Publisher = {Berlin: Springer},
  
 DOI = {10.1007/978-3-540-77017-6},
 Keywords = {14-02,14C30,14D07,32C38,32G20,32S35,58A14},
 zbMATH = {5233837},
 Zbl = {1138.14002}
}

@incollection {Scho94,
    AUTHOR = {Scholl, A. J.},
     TITLE = {Classical motives},
 BOOKTITLE = {Motives ({S}eattle, {WA}, 1991)},
    SERIES = {Proc. Sympos. Pure Math.},
    VOLUME = {55, Part 1},
     PAGES = {163--187},
 PUBLISHER = {Amer. Math. Soc., Providence, RI},
      YEAR = {1994},
      ISBN = {0-8218-1636-5},
   MRCLASS = {11G09 (11F32 14A20 14H35 18E15 18F10 18F15)},
  MRNUMBER = {1265529},
MRREVIEWER = {Alexey\ A.\ Panchishkin},
       DOI = {10.1090/pspum/055.1/1265529},
       URL = {},
}

@Article{Schu08,
 Author = {Sch{\"u}tt, Matthias},
 Title = {{{\(K3\)}} surfaces with {Picard} rank 20},
 FJournal = {Algebra \& Number Theory},
 Journal = {Algebra \& Number Theory},
 ISSN = {1937-0652},
 Volume = {4},
 Number = {3},
 Pages = {335--356},
 Year = {2010},
  
 DOI = {10.2140/ant.2010.4.335},
 Keywords = {14J28,11F11,11G15,11G25,11R29},
 URL = {},
 zbMATH = {5704472},
 Zbl = {1190.14034}
}

@Book{SS19,
 Author = {Sch{\"u}tt, Matthias and Shioda, Tetsuji},
 Title = {Mordell-{Weil} lattices},
 FSeries = {Ergebnisse der Mathematik und ihrer Grenzgebiete. 3. Folge},
 Series = {Ergeb. Math. Grenzgeb., 3. Folge},
 ISSN = {0071-1136},
 Volume = {70},
 ISBN = {978-981-329-300-7},
 Year = {2019},
 Publisher = {Singapore: Springer},
  
 Keywords = {14-02,11-02,14J25,14J27,14J28,11Gxx},
 zbMATH = {7105781},
 Zbl = {1433.14002}
}

@Article{Shi59,
 Author = {Shimura, Goro},
 Title = {On integrals attached to automorphic forms},
 FJournal = {Journal of the Mathematical Society of Japan},
 Journal = {J. Math. Soc. Japan},
 ISSN = {0025-5645},
 Volume = {11},
 Pages = {291--311},
 Year = {1959},
 Language = {French},
 DOI = {10.2969/jmsj/01140291},
 Keywords = {11F55},
 zbMATH = {3146960},
 Zbl = {0090.05503}
}

@Book{Shi71,
 Author = {Shimura, Goro},
 Title = {Introduction to the arithmetic theory of automorphic functions.},
 Edition = {Repr. of the 1971 orig.},
 FSeries = {Publications of the Mathematical Society of Japan},
 Series = {Publ. Math. Soc. Japan},
 ISSN = {0549-4540},
 Volume = {11},
 ISBN = {0-691-08092-5},
 Year = {1994},
 Publisher = {Princeton, NJ: Princeton Univ. Press},
  
 Keywords = {11Fxx,11-02,14H52,14G10,11G15,11F11,11F32,11F12,11F66,11F06,11F75},
 zbMATH = {872245},
 Zbl = {0872.11023}
}

@Misc{SI10,
 Author = {Shioda, Tetsuji and Inose, Hiroshi},
 Title = {On singular {K3} surfaces},
 Year = {1977},
 HowPublished = {Complex {Anal}. algebr. {Geom}., {Collect}. {Pap}. dedic. {K}. {Kodaira}, 119-136 (1977).},
 Keywords = {14J25,14J20,14J10,14G10},
 zbMATH = {3582297},
 Zbl = {0374.14006},
}

@InCollection {Sus91,
    AUTHOR = {Suslin, Andrei},
     TITLE = {{$K_3$} of a field, and the {B}loch group},
      NOTE = {Translated in Proc.\ Steklov Inst.\ Math.\ {\bf 1991}, no.\ 4,
              217--239,
              Galois theory, rings, algebraic groups and their applications
              (Russian)},
   JOURNAL = {Trudy Mat. Inst. Steklov.},
  FJOURNAL = {Akademiya Nauk SSSR. Trudy Matematicheskogo Instituta imeni V.
              A. Steklova},
    VOLUME = {183},
      YEAR = {1990},
     PAGES = {180--199, 229},
      ISSN = {0371-9685},
   MRCLASS = {19D45 (11S70)},
  MRNUMBER = {1092031},
MRREVIEWER = {L.\ N.\ Vaserstein and E.\ R.\ Wheland},
}

@article {Tri23,
    AUTHOR = {Trieu, Thu Ha},
     TITLE = {The Mahler measure of exact polynomials in three variables},
   JOURNAL = {Algebra \& Number Theory},
  FJOURNAL = {},
    NOTE   = {In Press. Available at  \href{https://arxiv.org/abs/2310.06563v4}{arxiv:2310.06563v4}},
    VOLUME = {},
      YEAR = {2024},
    NUMBER = {},
     PAGES = {43 pages},
      ISSN = {},
   MRCLASS = {},
  MRNUMBER = {},
MRREVIEWER = {},
       DOI = {},
       URL = {},
}

@article {Tri24,
    AUTHOR = {Trieu, Thu Ha},
     TITLE = {Doctoral thesis},
   JOURNAL = {},
  FJOURNAL = {},
    NOTE   = {available \href{https://drive.google.com/file/d/1YtZVuayEMZ98h_wMAUJH-iEhnANXIcQt/view?usp=sharing}{here}},
    VOLUME = {},
      YEAR = {2024},
    NUMBER = {},
     PAGES = {},
      ISSN = {},
   MRCLASS = {},
  MRNUMBER = {},
MRREVIEWER = {},
       DOI = {},
       URL = {},
}

@incollection {VSF00,
 Author = {Friedlander, Eric M. and Suslin, Andrei and Voevodsky, Vladimir},
 Title = {Introduction},
 BookTitle = {Cycles, transfers, and motivic homology theories},
 ISBN = {0-691-04815-0},
 Pages = {3--9},
 Year = {2000},
 Publisher = {Princeton, NJ: Princeton University Press},
 Keywords = {14F42,14C25,14C35},
 zbMATH = {1526534},
 Zbl = {1019.14008},
}

@Book{MVW06,
 Author = {Mazza, Carlo and Voevodsky, Vladimir and Weibel, Charles},
 Title = {Lecture notes on motivic cohomology},
 FSeries = {Clay Mathematics Monographs},
 Series = {Clay Math. Monogr.},
 ISSN = {1539-6061},
 Volume = {2},
 ISBN = {0-8218-3847-4},
 Year = {2006},
 Publisher = {Providence, RI: American Mathematical Society (AMS); Cambridge, MA: Clay Mathematics Institute},
 Language = {English},
 Keywords = {14F42,19E15,14C25,14-01,14F20},
 zbMATH = {5051055},
 Zbl = {1115.14010}
}

\end{document}